\newtheorem{theorem}{Theorem}[section]
\newtheorem{lemma}[theorem]{Lemma}
\newtheorem{question}[theorem]{Question}
\newtheorem{cor}[theorem]{Corollary}
\newtheorem{prop}[theorem]{Proposition}
\newtheorem{observation}[theorem]{Observation}
\newtheorem{fact}[theorem]{Fact}
\newtheorem{claim}[theorem]{Claim}
\theoremstyle{definition}
\newtheorem{definition}[theorem]{Definition}
\newtheorem{notation}[theorem]{Notation}
\newtheorem{example}[theorem]{Example}
\theoremstyle{remark}
\newtheorem{remark}[theorem]{Remark}
\def\Ind{\setbox0=\hbox{$x$}\kern\wd0\hbox to 0pt{\hss$\mid$\hss} \lower.9\ht0\hbox to 0pt{\hss$\smile$\hss}\kern\wd0}
\def\Notind{\setbox0=\hbox{$x$}\kern\wd0\hbox to 0pt{\mathchardef \nn=12854\hss$\nn$\kern1.4\wd0\hss}\hbox to 0pt{\hss$\mid$\hss}\lower.9\ht0 \hbox to 0pt{\hss$\smile$\hss}\kern\wd0}
\numberwithin{equation}{section}
\title{Approximations of the strict order property}
\author{ Scott Mutchnik
}
\address{Department of Mathematics, Statistics, and Computer Science, University of Illinois at Chicago}
\email{mutchnik@uic.edu}
\thanks{This work was supported by the National Science Foundation under Grant No. DMS-2303034, and by the European Union’s Horizon 2020 research and innovation programme under the Marie Sklodowska-Curie grant agreement No 101034255.}
\begin{document}

\begin{abstract}
We extend the original family of properties $\mathrm{NSOP}_{n}$ for $n \geq 3$ an integer, in Shelah's classical hierarchy of theories in first-order logic, to a family of properties $\mathrm{NSOP}_{r}$ for real numbers $r \geq 3$. Observing that the definition of the original properties $\mathrm{NSOP}_{n}$ for $n \geq 3$ can be restated so that it extends to the case where $n$ is replaced with any real number $r \geq 3$, we define a theory to be $\mathrm{NSOP}_{r}$ if there is no definable relation $R(x,y)$ with a sequence $\{a_{i}\}_{i < \omega}$ such that $\models R(a_{i}, a_{j})$ for $i <j$, but without any set $\{a_{\theta}\}_{\theta \in S^{1}}$ such that $\models R(a_{\theta}, a_{\psi})$ for all $\theta, \psi \in S^{1}$ with $\psi$ lying at most $\frac{2\pi}{r}$ radians counterclockwise from $\theta$. We give equivalent characterizations of these properties that demonstrate the robustness of their definition, and then, to make more tractable the question of whether the real-valued $\mathrm{NSOP}_{r}$ hierarchy is actually distinct from the integer-valued $\mathrm{NSOP}_{n}$ hierarchy, translate the properties $\mathrm{NSOP}_{r}$ from the language of first-order logic to the language of hereditary classes.

Motivated by our extension of the original integer-valued $\mathrm{NSOP}_{n}$ hierarchy to the real-valued $\mathrm{NSOP}_{r}$ hierarchy, and the translation of these hierarchies into the hereditary class setting, we obtain a new real-valued quantity of independent combinatorial interest, $\mathfrak{o}(\mathcal{H})$, associated with any hereditary class $\mathcal{H}$. We define $\mathfrak{o}(\mathcal{H})$ to be the supremum of the real values $r$ such that there exists some $AB \in \mathcal{H}$ with $\{A_{i}\}_{i< \omega} \in \mathcal{H}$ where $A_{i}A_{j} \equiv AB$ for $i < j$, but with no $\{ A_{\theta}\}_{\theta \in S^{1}} \in \mathcal{H}$ with $A_{\theta}A_{\psi} \equiv AB$ for all $\theta, \psi \in S^{1}$ for which $\psi$ lies at most $\frac{2\pi}{r}$ radians counterclockwise from $\theta$, where $X \equiv Y$ denotes equivalence of enumerated structures up to isomorphism. We show that, when $\mathcal{H}$ is defined by a finite family of forbidden weakly embedded substructures, $\mathfrak{o}(\mathcal{H})$ is an integer.

While Malliaris implicitly showed that the properties $\mathrm{NSOP}_{n}$ are equivalent to closure under helix-shaped covering maps or \textit{helix maps} between graphs, both our observation that the properties $\mathrm{NSOP}_{n}$ can be restated so that $n$ can be replaced with any real number at least 3, and our result that $\mathfrak{o}(\mathcal{H})$ is an integer when $\mathcal{H}$ is a hereditary class defined by a finite family of forbidden weakly embedded substructures, are even exhibited by a special class of helix maps, the \textit{interval helix maps}. These are helix maps that respect the direction of edges, and whose regions are disjoint unions of linearly ordered sets without any edges between them. Toward showing the conjectural claim that $\mathfrak{o}(\mathcal{H})$ is not an integer in general, and therefore that the real-valued $\mathrm{NSOP}_{r}$ hierarchy is distinct from the integer-valued $\mathrm{NSOP}_{n}$ hierarchy at the level of hereditary classes, we show that the statement that $\mathfrak{o}(\mathcal{H})$ is an integer in general cannot be exhibited by interval helix maps.

\end{abstract}

\maketitle

\section{Introduction}

\textit{Classification theory}, an area of mathematical logic at the core of model theory, starts with the question of determining the number of models of a given size of a logical theory. Morley, in his proof of the categoricity theorem in \cite{M65}, defines an ordinal-valued rank on definable sets, in order to classify first-order theories with exactly one model of size $\kappa$, where $\kappa$ is an uncountable cardinal. In his work in \cite{Sh90}, widely credited for introducing classification theory as a subject, Shelah extends Morley's results to develop a sweeping classification of first-order theories in terms of their spectrum of numbers of models of different cardinalities; for uncountable models, Hart, Hrushovski and Laskowski complete this classification in \cite{HHL00}. Shelah proves his results by dividing the first-order theories up into ``classifiable" theories and ``unclassifiable" theories, and then classifying the classifiable theories by developing a structure theory for models of \textit{stable} theories. Stability, a criterion for classifiability that can be defined both in terms of the \textit{order property} for individual formulas and in terms of a counting condition on types, is now well-established as a cornerstone property of theories, with instability indicating some level of additional complexity.

The central project of classification theory today, and one of the central projects of model theory as a whole, is to classify the more complex, unstable theories. In the course of his work on the number of models in \cite{Sh90}, Shelah initiates this by defining a hierarchy of unstable theories, consisting of properties that, just like stability, can be defined in terms of the combinatorics of individual formulas. Within Shelah's hierarchy, the class of $\mathrm{NSOP}$ theories, theories with the \textit{negation of the strict order property}, allows for an especially high degree of complexity, and understanding this class is one of the most troubling open-ended problems in the classification of unstable theories. A theory is $\mathrm{NSOP}$ if it does not interpret a partially ordered set with an infinite (ascending or descending) chain, and otherwise has the \textit{strict order property}, or $\mathrm{SOP}$. Because an order has no directed cycles of any size, one way to make understanding the strict order property more tractable is to consider the following family of approximations of the strict order property, which Shelah introduces in \cite{She95}:

\begin{definition}\label{original hierarchy} (Shelah, \cite{She95}.)
Let $n \geq 3$ be an integer. A theory $T$ is $\mathrm{NSOP}_{n}$ (that is, does not have the \emph{n-strict order property}) if there is no definable relation $R(x, y)$ such that

\begin{itemize}
    \item there exists a sequence $\{a_{i}\}_{i < \omega}$ such that $\models R(a_{i}, a_{j})$ for $i <j$, but

    \item there does not exist a directed $n$-cycle for $R(x, y)$; i.e., there are no $\{a_{i}\}^{n-1}_{i = 0}$ such that $\models R(a_{i}, a_{(i+1)\: \mathrm{mod}\: n}) $ for $0 \leq i \leq n-1$.
\end{itemize}
 Otherwise, it has $\mathrm{SOP}_{n}$.
\end{definition}

Later, Džamonja and Shelah (\cite{DS04}) introduce the properties $\mathrm{NSOP}_{1}$ and $\mathrm{NSOP}_{2}$, which are defined differently; see the introduction to \cite{SOPEXP} for an overview of our current understanding of $\mathrm{NSOP}_{n}$ theories for $n \geq 1$.\footnote{See also the interactive diagram designed by Conant at \cite{FD} for a visualization of the classical classification-theoretic hierarchy introduced by Shelah (\cite{Sh90}, \cite{She95}) and Džamonja and Shelah (\cite{DS04}), along with a few properties introduced more recently by other authors.} However, up to now, the properties $\mathrm{NSOP}_{n}$ have only been defined for integer values of $n$. In this article, we will be interested in the non-integer case.

As we will see, the definitions of the properties $\mathrm{NSOP}_{n}$ for integers $n \geq 3$ can be restated so that they remain valid for non-integer values of $n$, leading us to observe these properties could just as well have been defined for all real values greater than $3$, rather than just integer values. Specifically, we define the following family of properties of first-order theories, which extends Shelah's original $\mathrm{NSOP}_{n}$ hierarchy for integer values of $n$.

\begin{definition}\label{real-valued hierarchy}

Let $r \geq 3$ be a real number.\footnote{Though this definition will also make sense for, say, any real value $r > 2$, see remark \ref{ascending chain} for why we impose the restriction that $r \geq 3$.} A theory $T$ is $\mathrm{NSOP}_{r}$ if there is no definable relation $R(x, y)$ such that:

\begin{itemize}
    \item there exists a sequence $\{a_{i}\}_{i < \omega}$ such that $\models R(a_{i}, a_{j})$ for $i <j$, but

    \item there does not exist a set $\{a_{\theta}\}_{\theta \in S^{1}}$, indexed by the unit circle $S^{1}$, such that $\models R(a_{\theta}, a_{\psi})$ for all $\theta, \psi \in S^{1}$ with $\psi$ lying at most $\frac{2\pi}{r}$ radians counterclockwise from $\theta$ (see figure \ref{unit circle configuration} below.)
\end{itemize}

\begin{figure}[hbt!]
    
    \includegraphics[width=0.5\linewidth]{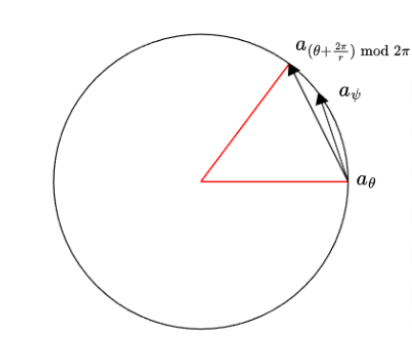}
    \caption{The forbidden set $\{a_{\theta}\}_{\theta \in S^{1}}$ in the definition of the properties $\mathrm{NSOP}_{r}$ for $r \geq 3$ a real number, Definition \ref{real-valued hierarchy}. Here angles are in radians, and the arrows denote the definable relation $R(x, y)$.}

    \label{unit circle configuration}
    
\end{figure}

 Otherwise it has $\mathrm{SOP}_{r}$.
    
\end{definition}

The fact that this definition specializes to the original properties $\mathrm{NSOP}_{n}$ in the case that $r = n \geq 3$ is an integer also offers new insight into the original properties $\mathrm{NSOP}_{n}$ themselves. According to the original definition (Definition \ref{original hierarchy}), in any $\mathrm{NSOP}_{n}$ theory, any definable relation $R(x, y)$ with a sequence $\{a_{i}\}_{i < \omega}$ such that $\models R(a_{i}, a_{j})$ for $i <j$ has a directed $n$-cycle. According to this equivalent definition with $r=n$, any such relation $R(x, y)$ has more than just a directed $n$-cycle: for example, $R(x,y)$ has infinitely many directed $n$-cycles, such as $\{a_{\frac{2k\pi}{n} \mathrm{\: rad \:}+ \psi}\}^{n-1}_{k=0}$ for fixed values of $\psi \in S^{1}$, which all interact with each other according to the hypothesis on $\{a_{\theta}\}_{\theta \in S^{1}}$.

This new definition leads us to the question of whether we have \textit{properly} extended Shelah's original $\mathrm{NSOP}_{n}$ hierarchy. That is, is there some $r \geq 3$ such that the class of $\mathrm{NSOP}_{r}$ theories is distinct from the class of $\mathrm{NSOP}_{n}$ theories for all integers $n \geq 3$? This question remains open, and we will consider it in Question \ref{distinctness} below. To make this question more tractable, we can also ask it at the level of combinatorics, rather than at the level of first-order logic. It is this combinatorial version of the question of whether the real-valued $\mathrm{NSOP}_{r}$ hierarchy is distinct from the integer-valued $\mathrm{NSOP}_{n}$ hierarchy that will interest us in this article.

Namely, we can ask the question of whether the two hierarchies are distinct at the \textit{quantifier-free} level, instead of at the level of full first-order logic (as in Remark \ref{hereditary classes and quantifier-free formulas} below). But equivalently, we can ask it in the setting of hereditary classes, straightforwardly translating the properties $\mathrm{NSOP}_{r}$ for $r \geq 3$ from the first-order setting to the hereditary class setting (as in Definition \ref{real-valued hierarchy, hereditary class version} below). Hereditary classes have previously been of interest to combinatorialists, and intrinsically combinatorial problems in the setting of hereditary classes have seen many connections to model theory: for example, Malliaris and Shelah (\cite{MS14}) show that stable graphs satisfy the conclusion of the Erdős-Hajnal conjecture on hereditary classes from combinatorics, as observed explicitly in \cite{CS16}, and Malliaris and Pillay further develop the connections to stability theory in \cite{MP15}. For further examples, see Laskowski and Terry (\cite{LC22}) for applications of model theory to speeds of hereditary classes, Terry and Wolf \cite{TW25} for applications of higher-arity stability, and Malliaris and Coregliano (\cite{CM22}, \cite{CM24}) for applications of stability to the Erdős-Hajnal problem via the theory of graphons. Translating the properties $\mathrm{SOP}_{r}$ into the hereditary class setting gives us a new quantity of intrinsic combinatorial interest associated with a hereditary class $\mathcal{H}$, the real number $\mathfrak{o}(\mathcal{H})$ defined in Definition \ref{real value of a hereditary class} below.

We will see that posing the problem of distinctness of the hierarchies in the language of hereditary classes allows us to make use of new combinatorial assumptions, such as being defined by a finite family of forbidden weakly embedded substructures (Definition \ref{defined by a (finite) family of forbidden weakly embedded substructures}). Our first main theorem, Theorem \ref{integerality}, will be to show that when a hereditary class $\mathcal{H}$ is defined by a finite family of forbidden weakly embedded substructures, $\mathfrak{o}(\mathcal{H})$ is an integer. As one should expect in any combinatorial analysis of the properties $\mathrm{NSOP}_{n}$, we will prove this using ``helix maps" (Definition \ref{helix map}), a kind of covering space of a graph implicitly introduced by Shelah in \cite{She95} in his proofs of the properties $\mathrm{SOP}_{n}$ in some examples of first-order theories, and by Malliaris in \cite{Mal10b} in her work on edge distribution in $\mathrm{NSOP}_{3}$ theories. The key insight will be to understand these helix covers not just as graphs, but as morphisms in a category of finite graphs, isolating an abstract property of helix maps (Lemma \ref{cycle removal}) that will allow us to remove induced $m$-cycles for $m \geq n$ from graphs omitted by any $\mathrm{NSOP}_{n}$ hereditary class. By repeatedly applying this abstract property to (some finite part of) the omitted configuration $\{a_{\theta}\}_{\theta \in S^{1}}$ in Figure \ref{unit circle configuration}, with a new stage for each cycle length, we eventually obtain an omitted configuration whose smallest cycles are arbitrarily large. By the assumptions on the hereditary class, we then use an argument from \cite{LZ17} to obtain an omitted cyclefree configuration, which cannot exist in the presence of $\{a_{i}\}_{i< \omega}$ with $\models R(a_{i}, a_{j})$ for $i < j$.

Our next step will be to consider the question of whether $\mathfrak{o}(\mathcal{H})$ is an integer in general (Question \ref{hereditary class distinctness}), which, up to the case where $\mathcal{H}$ is $\mathrm{NSOP}_{n}$ for $n$ an integer but has $\mathrm{SOP}_{r}$ for $3 \leq r < n$, will be the same as asking whether the real-valued and integer-valued hierarchies are distinct at the level of hereditary classes. If $\mathfrak{o}(\mathcal{H})$ can have non-integer values for some hereditary class $\mathcal{H}$, this would be of interest for two reasons. First, showing the real-valued $\mathrm{NSOP}_{r}$ and integer-valued $\mathrm{NSOP}_{n}$ hierarchies are distinct \textit{at the level of hereditary classes} would be relevant to the question of whether the real-valued $\mathrm{NSOP}_{r}$ hierarchy actually introduces new properties \textit{of first-order theories}. Second, if $\mathfrak{o}(\mathcal{H})$ is not always an integer in general, that would stand in contrast to our result that $\mathfrak{o}(\mathcal{H})$ \textit{is} always an integer when $\mathcal{H}$ is defined by a finite family of forbidden weakly embedded substructures. Towards showing that $\mathfrak{o}(\mathcal{H})$ is not an integer, in Theorem \ref{interval helix non-integrality} we prove that a certain class of helix maps that is powerful enough for showing $\mathfrak{o}(\mathcal{H})$ is an integer in the case where $\mathcal{H}$ is defined by a finite family of forbidden weakly embedded substructures, and is \textit{also} powerful enough for showing that Shelah's original integer-valued $\mathrm{NSOP}_{n}$ hierarchy can be restated to extend to the real-valued $\mathrm{NSOP}_{r}$ hierarchy defined above, is \textit{not} powerful enough to show that $\mathfrak{o}(\mathcal{H})$ is an integer in general.\footnote{Technically, it will be more precise to say that we will show that this class of helix maps is not powerful enough to show that the real-valued $\mathrm{SOP}_{r}$ hierarchy and integer-valued $\mathrm{NSOP}_{n}$ hierarchy are not distinct at the level of hereditary classes, but our proof should straightforwardly extend to the statement in terms of $\mathfrak{o}(\mathcal{H})$ by Remark \ref{real-valued helix maps} below.} This class of helix maps will be the \textit{interval helix maps} (Definition \ref{special helices}), the class of helix maps, respecting the direction of edges, whose components are disjoint unions of linearly ordered sets with no edges between them. The question of whether $\mathfrak{o}(\mathcal{H})$ is an integer, or of whether the real-valued and integer-valued hierarchies are distinct for hereditary classes, reduces to the case where $\mathcal{H}$ is a hereditary class of graphs closed under weak embeddings (Remark \ref{reduction to directed graphs}), and Malliaris implicitly showed in \cite{Mal10b} that such a hereditary class is $\mathrm{NSOP}_{n}$ if and only if it is closed under $n$-helix maps (Fact \ref{closure under helix maps}). So (taking into account the part of Remark \ref{reduction to directed graphs} about the graph relation itself exhibiting $\mathrm{SOP}_{r}$) a hereditary class of this kind will exhibit the distinctness of the real-valued and integer-valued hierarchies if its graph relation exhibits $\mathrm{SOP}_{r}$ for some real number $r > 3$, but is closed under $n$-helix maps for every integer $n > r$. Theorem \ref{interval helix non-integrality} says that there is at least a hereditary class whose graph relation exhibits $\mathrm{SOP}_{r}$ for some real number $r > 3$, but which is closed under $n$-interval helix maps for any integer $n > r$. We prove this by defining a combinatorial invariant, \textit{cyclic $n$-indecomposability} (Definition \ref{cyclically n-indecomposable}), which is satisfied by (a finite part of) the omitted configuration $\{a_{\theta}\}$ from Figure \ref{unit circle configuration}, and which is only satisfied by graphs containing cycles. Roughly, cyclic $n$-indecomposability says that a graph cannot be completely partitioned into $n$ regions so that the edges in between those regions have the pattern of a directed $n$-cycle. We will have to show that, for $n^{*} = \lceil r \rceil$ the next integer after $r$, this graph invariant is preserved under $n$-helix covers for integers $n >r $: any $n$-helix cover of an $n^{*}$-cyclically indecomposable graph must have an $n^{*}$-indecomposable subgraph. To obtain this subgraph, we construct a roughly ``parallelogram-shaped" set, as in Figure \ref{parallelogram} below.

The organization of this paper is as follows. In Section 2, we will show that Shelah's original properties $\mathrm{NSOP}_{n}$ (Definition \ref{original hierarchy}) for $n \geq 3$ an integer really do extend to the real-valued hierarchy defined in Definition \ref{real-valued hierarchy}, and show the equivalence of finitary and infinitary versions of the definition of the real-valued hierarchy, demonstrating the robustness of the definition of this hierarchy. These equivalent definitions are stated in Definition \ref{real-valued hierarchy, detailed definition}. Within Section 2, in Section 2.1 we translate the definition of the real-valued hierarchy from the first-order setting to the hereditary class setting (Definition \ref{real-valued hierarchy, hereditary class version}), and define the real-valued quantity $\mathfrak{o}(\mathcal{H})$ (Definition \ref{real value of a hereditary class}). We also pose the questions of whether the real-valued and integer-valued hierarchies are distinct at the level of hereditary classes, and of whether $\mathfrak{o}(\mathcal{H})$ is an integer (Question \ref{hereditary class distinctness}), define the concept of weak embeddings and related properties of hereditary classes (Definitions \ref{weak embedding}, \ref{class of structures omitting a weakly embedded substructure}, and \ref{defined by a (finite) family of forbidden weakly embedded substructures}), and show that these questions reduce to the case of a hereditary class defined by a family of forbidden weakly embedded substructures (Proposition \ref{reduction to the case of a hereditary class closed under weak embeddings}, which will be improved to the case of a hereditary class of \textit{directed graphs} in Remark \ref{reduction to directed graphs}). In Section 3, we prove that $\mathfrak{o}(\mathcal{H})$ is an integer in the case of a hereditary class $\mathcal{H}$ defined by a \textit{finite} family of forbidden weakly embedded substructures (Theorem \ref{integerality}), defining helix maps (Definition \ref{helix map}), and recounting Malliaris's proof that $\mathrm{NSOP}_{n}$ is equivalent to closure under $n$-helix maps (Fact \ref{closure under helix maps}), in the process. In Section 4, we define special kinds of helix maps, including interval helix maps (Definition \ref{special helices}), make precise how they are powerful enough to show that $\mathfrak{o}(\mathcal{H})$ is an integer in the case of Section 3 and to prove the restatement of Shelah's original properties, and prove our result on how they are not powerful enough to show that $\mathfrak{o}(\mathcal{H})$ is an integer in general (Theorem \ref{interval helix non-integrality}). Finally, within Section 4, in Section 4.1 we present some additional considerations relevant to the open problem of extending this result to more general classes of helix maps.

\section{Motivation and definitions}

We begin by showing that our newly defined $\mathrm{NSOP}_{r}$ hierarchy (Definition \ref{real-valued hierarchy}), for $r \geq 3$ a real number, actually extends Shelah's original $\mathrm{NSOP}_{n}$ hierarchy (Definition \ref{original hierarchy}), for $n \geq 3$ an integer. We also give some equivalent conditions for the properties $\mathrm{NSOP}_{r}$ for $r \geq 3$ a real number, showing that the definition of these properties is robust. These equivalent conditions for the properties $\mathrm{NSOP}_{r}$, for $r \geq 3$ a real number, will be summarized by a restatement of Definition \ref{real-valued hierarchy}, Definition \ref{real-valued hierarchy, detailed definition} below.

We start by proving the following observation. This observation will be supplanted by Lemma \ref{(N, k)-cycle comparison} below; however, we include it to isolate the main idea of why the definitions of the properties $\mathrm{NSOP}_{n}$ can be restated so that they make sense for non-integer values of $n$, especially in the last paragraph of the proof of this observation.

\begin{observation}\label{restatement of NSOP_n}
Let $n \geq 3$ be an integer. A theory $T$ is $\mathrm{NSOP}_{n}$ if and only if, for any definable relation $R(x, y)$ with a sequence $\{a_{i}\}_{i < \omega}$ such that $\models R(a_{i}, a_{j})$ for $i <j$, and for all $N < \omega$, there exist $b_{0}, \ldots, b_{N -1} $ such that, for all integers $i, j$ such that $0 \leq i <N$ and $1 \leq j \leq \frac{N}{n}$,  $\models R(b_{i}, b_{i+j \mathrm{\: mod \:} N})$.

\end{observation}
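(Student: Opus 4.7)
The plan is to prove both directions of the equivalence separately, with the main work being the forward direction, which uses a block-construction trick.

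The reverse direction is immediate: specializing to $N = n$, the constraint $1 \le j \le N/n = 1$ forces $j = 1$, so the asserted $b_0, \ldots, b_{n-1}$ satisfy $R(b_i, b_{(i+1) \bmod n})$ for $0 \le i < n$. This is a directed $n$-cycle for $R$, so $T$ is $\mathrm{NSOP}_n$ as required.

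For the forward direction, fix $R(x,y)$ with an $R$-chain $\{a_i\}_{i < \omega}$ and an integer $N$. The key is to define an auxiliary binary relation $R^{(N)}(\bar x, \bar y)$ on pairs of $N$-tuples that asserts both that $R(x_i, y_j)$ holds for all $0 \le i, j < N$ and that $R(x_i, x_j)$ and $R(y_i, y_j)$ hold for all $0 \le i < j < N$ (so that $\bar x$ and $\bar y$ are themselves internal $R$-chains, and every cross-pair has $R$). The blocks $\bar a_s := (a_{sN}, \ldots, a_{sN+N-1})$ then form an $R^{(N)}$-chain indexed by $s < \omega$. Applying $\mathrm{NSOP}_n$ to $R^{(N)}$ (viewed as a binary relation between $N$-tuples, in the usual convention) produces a directed $n$-cycle $\bar c_0, \ldots, \bar c_{n-1}$ of $N$-tuples. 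Concatenating these into a sequence $c_0, \ldots, c_{nN-1}$ by $c_{sN+i} := c^{(s)}_i$, we obtain $nN$ elements on which $R(c_m, c_{m'})$ holds whenever $m < m'$ lie in a common block $B_s := [sN, sN+N)$, and also whenever $m \in B_s$ and $m' \in B_{s+1 \bmod n}$.

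The desired $b_i := c_{in}$ for $0 \le i < N$ then satisfies the circular condition, which I verify by splitting into cases. Fix $1 \le j \le \lfloor N/n \rfloor$. In the non-wraparound case $i + j < N$, the index gap $jn \le n \lfloor N/n \rfloor \le N$ forces $b_i$ and $b_{i+j}$ to lie in the same or adjacent blocks, so $R(b_i, b_{i+j})$ holds directly from the properties of the $c_m$'s. In the wraparound case $i + j \ge N$, the bounds $i \ge N - \lfloor N/n \rfloor$ and $i + j - N \le \lfloor N/n \rfloor - 1$ force $b_i$ into block $n-1$ and $b_{(i+j) \bmod N}$ into block $0$, so $R(b_i, b_{(i+j) \bmod N})$ follows from the cross-edge closing the directed $n$-cycle from $\bar c_{n-1}$ to $\bar c_0$. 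The only real obstacle is arithmetic: checking that the stride $n$ (taking every $n$-th element among $c_0, \ldots, c_{nN-1}$), combined with the constraint $j \le N/n$, exactly aligns with the block structure so that jumps never cross more than one block boundary. This is the identity $n \lfloor N/n \rfloor \le N$, and it is precisely this flexibility — nothing forces $N/n$ to be an integer — that makes the definition equally natural for non-integer values of $n$, as the closing remark of the observation is meant to highlight.
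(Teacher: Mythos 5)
Your proof is correct, and it takes a genuinely different — in fact cleaner — route than the paper's. Both arguments share the core idea of grouping the $a_i$'s into blocks, defining an auxiliary relation on tuples that is both ``block-internally'' and ``cross-block'' complete, and feeding the resulting chain of blocks into the $\mathrm{NSOP}_n$ hypothesis. But the paper makes blocks of size $k = N/n$, which requires $N$ to be a multiple of $n$, and then concatenates the $n$ blocks of the resulting $n$-cycle to get exactly $N$ elements. To handle $N$ not divisible by $n$, the paper runs the divisibility case at $N' = kn \ge N$ and then carefully selects a set $\{\hat{i}_1, \ldots, \hat{i}_l\}$ of indices to delete from the circle so as to preserve the circular jump structure — a somewhat delicate bookkeeping argument. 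You instead use blocks of size $N$, obtain $nN$ concatenated elements, and then downsample by taking every $n$-th element; the only verification needed is that stride-$n$ jumps of length $jn \le n\lfloor N/n\rfloor \le N$ never cross more than one block boundary (non-wraparound case), and that the extreme indices land in the first and last blocks (wraparound case), both of which you check correctly. The trade is a larger auxiliary tuple size for a completely uniform treatment of all $N$ with no divisibility reduction. Your closing observation — that the argument never uses integrality of $N/n$, only the floor — is exactly the point the paper wants the reader to take away, and your version makes it slightly more visible.

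One small thing worth being explicit about: in the non-wraparound adjacent-blocks case, you should note that $i + j < N$ forces the higher block index to be at most $n-1$, so ``adjacent'' never means ``block $n$'' (which doesn't exist); this is implicit in your bound $(i+j)n < Nn$ but deserves a sentence. Similarly, in the same-block case, one should observe that $j \ge 1$ forces strictly increasing within-block position, so the internal-chain property applies. Neither of these is a gap — they follow immediately from the bounds you've already set up — but they are the residual arithmetic checks behind the phrase ``holds directly from the properties of the $c_m$'s.''
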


\begin{proof}
    The ``if" direction is immediate; just set $N = n$.

    For the ``only if" direction, let $T$ be $\mathrm{NSOP}_{n}$, let $R(x, y)$ be any relation with a sequence $\{a_{i}\}_{i < \omega}$ such that $\models R(a_{i}, a_{j})$ for $i <j$, and let $N < \omega$. We show that there exist $b_{0}, \ldots, b_{N -1} $ such that, for all integers $i, j$ such that $0 \leq i <N$ and $1 \leq j \leq \frac{N}{n}$,  $\models R(b_{i}, b_{i+j \mathrm{\: mod \:} N})$.

    Let us first reduce to the case where $N$ is a multiple of $n$. Assume we have shown that case; then it remains to find $b_{0}, \ldots, b_{N -1} $ as above for any $N$ that is not a multiple of $n$. Let $N'=kn$ be the least multiple of $n$ greater than $N$. Then there exist $b'_{0}, \ldots, b'_{N' -1} $ such that, for all integers $i, j$ such that $0 \leq i <N'$ and $1 \leq j \leq k $,  $\models R(b_{i}, b_{i+j \mathrm{\: mod \:} N'})$. If we can find $b_{0}, \ldots, b_{N -1} $ such that for all integers $i, j$ such that $0 \leq i <N$ and $1 \leq j \leq k-1 $,  $\models R(b_{i}, b_{i+j \mathrm{\: mod \:} N})$, these will be as desired. Let $l = N' - N$; since $l < n$, we can choose $0 \leq \hat{i}_{1} < \ldots < \hat{i}_{l}\leq N'-1$ such that, for $1 \leq j < l$, $\hat{i}_{j+1} - \hat{i}_{j} $ is not equal $\mathrm{mod} \: N'$ to one of $0, 1, \ldots k-1$, and $\hat{i}_{1} - \hat{i}_{l} $ is also not equal $\mathrm{mod} \: N'$ to one of $0, 1, \ldots k-1$. For $0 \leq j \leq N-1$, define $i_{j}$ to be the $j$th largest element of $\{0, \ldots, N'-1\} \backslash \{\hat{i}_{1} , \ldots , \hat{i}_{l}\}$, and define $b_{j} := b'_{i_{j}}$. Then it is in fact true that for all integers $i, j$ such that $0 \leq i <N$ and $1 \leq j \leq k-1 $,  $\models R(b_{i}, b_{i+j \mathrm{\: mod \:} N})$, by the assumption on $b'_{0}, \ldots, b'_{N' -1} $ and the fact that, for all $i \in \{0, \ldots, N'-1\} \backslash \{\hat{i}_{1} , \ldots , \hat{i}_{l}\}$, at most one of the values $(i+1 \: \mathrm{mod} \: N'), \ldots, (i + k \: \mathrm{mod} \: N') $ belongs to $\{\hat{i}_{1} , \ldots , \hat{i}_{l}\}$.

    Now we show the case where $N = kn$. Define $\{\tilde{a}_{i}\}_{i < \omega} := \{a_{ik}a_{ik+1} \ldots a_{ik+(k-1)} \}_{i < \omega} $. For variables $\tilde{x}_{1} := x_{0}\ldots x_{k-1} $, $\tilde{x}_{2} := x_{k} \ldots x_{2k-1}$, define $\tilde{R}(\tilde{x}_{1}, \tilde{x}_{2}) := \bigwedge_{0 \leq i < j \leq 2k -1} R(x_{i}, x_{j})$. Then $\{\tilde{a}_{i}\}_{i < \omega}$ satisfies the property that $\models \tilde{R}(\tilde{a}_{i}, \tilde{a}_{j})$ for $i < j$. So $\tilde{R}(\tilde{x}_{1}, \tilde{x}_{2})$ has a directed $n$-cycle, $\{\tilde{b}_{i}\}^{n}_{i = 0}$. Write $\tilde{b}_{i}$ as $b_{ik}b_{ik+1} \ldots b_{ik+(k-1)}$. Then $b_{0}, \ldots, b_{N -1} $ will be as desired.
\end{proof}

Observe that the equivalent condition for $\mathrm{NSOP}_{n}$ stated in Observation \ref{restatement of NSOP_n} really does make sense for non-integer values of $n$, suggesting we can in fact extend the definitions of the properties $\mathrm{NSOP}_{n}$ for $n \geq 3$ an integer to definitions of properties $\mathrm{NSOP}_{r}$ for $r \geq 3$ a real number, as we did in Definition \ref{real-valued hierarchy} from the introduction. 

We prove the following two lemmas to justify the below equivalent definitions of $\mathrm{NSOP}_{r}$ for $r \geq 3$ a real number (Definition \ref{real-valued hierarchy, detailed definition}, giving equivalent conditions for Definition \ref{real-valued hierarchy}.) We introduce the following notation only as a notational convenience to use in our proofs; in our statements of definitions and main theorems, we will not use this notation, in order to keep those definitions and theorems explicit.

\begin{notation}\label{(N, k)-cycle}

Let $R(x, y)$ be a definable relation, and let $N$ and $k$ be positive integers. An \textit{$(N, k)$-cycle} for $R(x, y)$ is a set  $b_{0}, \ldots, b_{N -1} $ such that, for all integers $i, j$ such that $0 \leq i <N$ and $1 \leq j \leq k $,  $\models R(b_{i}, b_{i+j \mathrm{\: mod \:} N})$.
\end{notation}

\begin{lemma}\label{(N, k)-cycle comparison}
    Let $N, k$ be positive integers. Say that a theory $T$ has property \emph{(*)}$_{N, k}$ if, for any definable relation $R(x, y)$ with a sequence $\{a_{i}\}_{i < \omega}$ such that $\models R(a_{i}, a_{j})$ for $i <j$, $R(x, y)$ has an $(N, k)$-cycle. Suppose $\frac{N}{k} \leq \frac{N'}{k'}$. Then if $T$ has \emph{(*)}$_{N, k}$, $T$ has \emph{(*)}$_{N', k'}$ 
\end{lemma}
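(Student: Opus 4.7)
The plan is to prove Lemma~\ref{(N, k)-cycle comparison} in two steps that together interpolate between $(N,k)$ and $(N',k')$: first, a \emph{blow-up} step showing that \emph{(*)}$_{N,k}$ implies \emph{(*)}$_{Nm,km}$ for every positive integer $m$; and second, a \emph{subsampling} step showing that any $(N'',k'')$-cycle with $N'' \geq N'$ and $N''/k'' \leq N'/k'$ contains an $(N',k')$-cycle. Granted these, given any definable $R(x,y)$ with an increasing sequence, I would pick $m$ large enough that $Nm \geq N'$, apply the blow-up to obtain an $(Nm,km)$-cycle, and then subsample (using $Nm/km = N/k \leq N'/k'$) to obtain the desired $(N',k')$-cycle.

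For the blow-up step, I would adapt the grouping trick from the proof of Observation~\ref{restatement of NSOP_n}. Given an $R$-increasing sequence $\{a_i\}_{i<\omega}$, form the $m$-tuples $\tilde a_i := a_{im}a_{im+1}\ldots a_{im+m-1}$, set
\[
\tilde R(\tilde x_1, \tilde x_2) \;:=\; \bigwedge_{0 \leq u < v \leq 2m-1} R(x_u, x_v),
\]
and apply \emph{(*)}$_{N,k}$ to the $\tilde R$-increasing sequence $\{\tilde a_i\}$ to produce an $(N,k)$-cycle $\tilde b_0, \ldots, \tilde b_{N-1}$ for $\tilde R$. Unfolding gives $b_0, \ldots, b_{Nm-1}$, which I would check is an $(Nm,km)$-cycle for $R$ as follows: for $0 \leq p < Nm$ and $1 \leq q \leq km$, write $p = im + a$ with $0 \leq a < m$ and set $d := \lfloor (a+q)/m \rfloor$, so that $d \in \{0, 1, \ldots, k\}$. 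When $d = 0$, the target index $(p+q) \bmod Nm$ lies in the same block $i$ at a strictly larger coordinate, so the required edge comes from a within-block edge of $\tilde R$; when $1 \leq d \leq k$, and using that $k < N$ (which is guaranteed in the intended regime $r \geq 3$, i.e., $N/k \geq 3$), the target lies in block $(i+d) \bmod N$ at cyclic distance $d$ from block $i$, so the required edge comes from a cross-block edge $\tilde R(\tilde b_i, \tilde b_{(i+d) \bmod N})$ of the $(N,k)$-cycle.

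For the subsampling step, given an $(N'',k'')$-cycle $c_0, \ldots, c_{N''-1}$, I would take indices $i_p := \lceil pN''/N' \rceil$ for $0 \leq p < N'$; since $N'' \geq N'$, these are strictly increasing in $\{0, \ldots, N''-1\}$. A short telescoping computation, using integrality of $N''$ so that $\lceil x - N'' \rceil = \lceil x \rceil - N''$, shows that the forward cyclic distance from $i_p$ to $i_{(p+q) \bmod N'}$ in $\mathbb{Z}/N''\mathbb{Z}$ equals $\lceil (p+q)N''/N' \rceil - \lceil pN''/N' \rceil$ in both the wrap and no-wrap cases. Subadditivity of the ceiling together with the hypothesis $N''/k'' \leq N'/k'$ then gives
\[
\lceil (p+q)N''/N' \rceil - \lceil pN''/N' \rceil \;\leq\; \lceil qN''/N' \rceil \;\leq\; \lceil k'N''/N' \rceil \;\leq\; k''
\]
for $1 \leq q \leq k'$, so $c_{i_0}, \ldots, c_{i_{N'-1}}$ forms an $(N',k')$-cycle. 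The main obstacle I anticipate is the modular bookkeeping in the blow-up step, especially ruling out spurious within-block demands with reversed coordinate order when the block index wraps around in $\mathbb{Z}/Nm\mathbb{Z}$; this is precisely where the hypothesis $k < N$ is essential, as it forces the quantity $d$ to lie in $\{0, 1, \ldots, k\} \subseteq \{0, 1, \ldots, N-1\}$ and so prevents $(i+d) \bmod N$ from collapsing back to $i$.
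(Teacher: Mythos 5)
Your proposal is correct, and it proves the lemma by a genuinely different route than the paper. Both of you use the same grouping trick for the blow-up: form $m$-tuples $\tilde a_i$, build $\tilde R$ as a conjunction over all pairs, extract an $(N,k)$-cycle for $\tilde R$, and unfold. The difference is in how the subsampling is handled. The paper proves the full biconditional \emph{(*)}$_{N,k} \Leftrightarrow$ \emph{(*)}$_{mN,mk}$ (the $\Leftarrow$ direction subsamples at evenly spaced positions $b_i := b'_{m(i+1)-1}$, which works precisely because $mN$ is a multiple of $N$), and then applies it twice to move both \emph{(*)}$_{N,k}$ and \emph{(*)}$_{N',k'}$ to the common modulus $NN'$, where the comparison $N'k \geq Nk'$ makes the implication immediate from the definition of an $(N,k)$-cycle. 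You instead prove a standalone combinatorial subsampling lemma for arbitrary $N'' \geq N'$ with $N''/k'' \leq N'/k'$, using the ceiling indices $i_p = \lceil pN''/N' \rceil$ and the ceiling's subadditivity; this buys you the ability to subsample directly from $(Nm,km)$ to $(N',k')$ for any large enough $m$, without needing $Nm$ to be a multiple of $N'$. The paper's approach is structurally cleaner (one common denominator, trivial comparison); yours is slightly more direct and the subsampling lemma is perhaps of independent interest, at the cost of the ceiling bookkeeping.

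One minor note: the $k < N$ caveat you flag in the blow-up step is not actually essential to the argument. If $1 \leq d \leq k$ and $(i+d) \bmod N = i$, the required edge still comes from $\tilde R(\tilde b_i, \tilde b_i)$, which is one of the $(N,k)$-cycle conditions (take $j = d$), since $\tilde R$ imposes $R(x_u, x_v)$ across the two blocks regardless of which tuples are substituted. Of course $k < N$ does hold throughout the regime $N/k \geq 3$ in which the lemma is actually applied, so your sanity check is fine; it is just not load-bearing.
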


\begin{proof}
    We first observe that it suffices to prove the following claim:

    \begin{claim}\label{multiply numerator and denominator}
        For any positive integers $m$, $N$, $k$, $T$ has \emph{(*)}$_{N, k}$ if and only if $T$ has \emph{(*)}$_{mN, mk}$.
    \end{claim}

    By this claim, it would suffice to prove Lemma \ref{(N, k)-cycle comparison} in the case where $N = N'$. But this is just immediate from the definition of an $(N, k)$-cycle.

    \begin{proof}(of claim)
        ($\Leftarrow$) Let $b'_{0}, \ldots, b'_{mN-1}$ be an $(mN, mk)$-cycle. For $0 \leq i < N$, let $b_{i} := b'_{m(i+1) -1} $. Then $b_{0}, \ldots, b_{N-1}$ is an $(N, k)$-cycle.

        ($\Rightarrow$) Suppose that $R(x, y)$ has a sequence $\{a_{i}\}_{i < \omega}$ such that $\models R(a_{i}, a_{j})$ for $i <j$, and that $T$ has (*)$_{N, k}$. We show that $R(x, y)$ has an $(mN, mk)$-cycle.

        Define $\{\tilde{a}_{i}\}_{i < \omega} := \{a_{im}a_{im+1} \ldots a_{im+(m-1)} \}_{i < \omega} $. For variables $\tilde{x}_{1} := x_{0}\ldots x_{m-1} $, $\tilde{x}_{2} := x_{m} \ldots x_{2m-1}$, define $\tilde{R}(\tilde{x}_{1}, \tilde{x}_{2}) := \bigwedge_{0 \leq i < j \leq 2m -1} R(x_{i}, x_{j})$. Then $\{\tilde{a}_{i}\}_{i < \omega}$ satisfies the property that $\models \tilde{R}(\tilde{a}_{i}, \tilde{a}_{j})$ for $i < j$. So $\tilde{R}(\tilde{x}_{1}, \tilde{x}_{2})$ has a directed $(N, k)$-cycle, $\{\tilde{b}_{i}\}^{N}_{i = 0}$. Write $\tilde{b}_{i}$ as $b_{im}a_{im+1} \ldots b_{im+(m-1)}$. Then $b_{0}, \ldots, b_{mN -1} $ will be as desired.
    \end{proof}
\end{proof}

\begin{lemma}\label{finitary and infinitary conditions}
    Let $r \geq 3$. The following are equivalent for any theory $T$:

    (1) $T$ has \emph{(*)}$_{N, k}$ for all integers $N, k$ with $\frac{N}{k} \geq r$.

    (2) $T$ is $\mathrm{NSOP}_{r}$ (as in Definition \ref{real-valued hierarchy}.)
\end{lemma}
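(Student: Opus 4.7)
The two directions have very different flavors; I sketch them separately.

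For $(2) \Rightarrow (1)$, assume $T$ is $\mathrm{NSOP}_{r}$ and let $R(x,y)$ be any relation with an infinite chain $\{a_i\}_{i<\omega}$. By Definition \ref{real-valued hierarchy}, there exists $\{a_\theta\}_{\theta \in S^1}$ with $\models R(a_\theta, a_\psi)$ whenever $\psi$ lies at most $\tfrac{2\pi}{r}$ counterclockwise from $\theta$. For $N, k$ with $N/k \geq r$, set $b_i := a_{2\pi i/N}$ for $0 \leq i < N$; for $1 \leq j \leq k$, the counterclockwise angle from $\tfrac{2\pi i}{N}$ to $\tfrac{2\pi((i+j) \bmod N)}{N}$ is exactly $\tfrac{2\pi j}{N} \leq \tfrac{2\pi k}{N} \leq \tfrac{2\pi}{r}$, so $\models R(b_i, b_{(i+j) \bmod N})$. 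Thus $b_0, \ldots, b_{N-1}$ is an $(N,k)$-cycle and $(*)_{N,k}$ holds.

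For $(1) \Rightarrow (2)$, I work inside a sufficiently saturated model. Let $R(x,y)$ have an infinite chain $\{a_i\}_{i<\omega}$ with $\models R(a_i, a_j)$ for $i < j$. I form the partial type $p(\{x_\theta\}_{\theta \in S^1}) = \{R(x_\theta, x_\psi) : 0 < (\psi - \theta) \bmod 2\pi \leq \tfrac{2\pi}{r}\}$ and reduce by compactness to showing each restriction to a finite $F = \{\theta_1, \ldots, \theta_m\} \subset S^1$ is realized. For such an $F$, the plan is to take $N$ large, set $k := \lfloor N/r \rfloor$ so that $N/k \geq r$, invoke $(*)_{N,k}$ to obtain an $(N,k)$-cycle $b_0, \ldots, b_{N-1}$, approximate each $\theta_i$ by $n_i := \lfloor N\theta_i/(2\pi) \rfloor$, and realize the witness of $\theta_i$ as $b_{n_i}$. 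For $N$ larger than the reciprocal of the minimal angular gap of $F$ the $n_i$ are distinct mod $N$, and a direct floor estimate shows that whenever $(\theta_j - \theta_i) \bmod 2\pi < \tfrac{2\pi}{r}$ strictly and $N$ is sufficiently large, $(n_j - n_i) \bmod N \in [1, k]$, giving $\models R(b_{n_i}, b_{n_j})$.

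The main obstacle is the boundary case, where $(\theta_j - \theta_i) \bmod 2\pi = \tfrac{2\pi}{r}$ exactly: here the naive floor approximation can yield $(n_j - n_i) \bmod N = k+1$, one too many. I will handle this according to the arithmetic of $r$. When $r = p/q$ is rational in lowest terms, I take $N$ to be a multiple of $p$ so that $N/r = k$ is an integer, and then a clean floor computation gives $(n_j - n_i) \bmod N = k$ exactly on the boundary. When $r$ is irrational, I first perturb $F$ to $F' = \{\theta_i + \delta_i\}_{i=1}^m$ with small $\delta_i$'s such that no pair in $F'$ has angle exactly $\tfrac{2\pi}{r}$, while every pair with $(\theta_j - \theta_i) \bmod 2\pi \leq \tfrac{2\pi}{r}$ in $F$ satisfies $(\theta_j' - \theta_i') \bmod 2\pi < \tfrac{2\pi}{r}$ strictly in $F'$. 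Such a perturbation exists because the relation $i \prec j$ on $F$ defined by $(\theta_j - \theta_i) \bmod 2\pi = \tfrac{2\pi}{r}$ is acyclic for irrational $r$: a $\prec$-cycle of length $\ell$ would force $\ell \cdot \tfrac{2\pi}{r} \equiv 0 \pmod{2\pi}$, i.e.\ $\ell/r \in \mathbb{Z}$, which is impossible. A topological sort of $\prec$ produces $\delta_i$'s with $\delta_i > \delta_j$ whenever $i \prec j$, taken small enough to preserve the strict inequalities elsewhere, reducing the irrational case to the strict situation handled by the floor estimate. Compactness then yields the full circle configuration and thus $\mathrm{NSOP}_{r}$.
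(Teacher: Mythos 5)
Your proposal is correct, and it proves the same lemma by a route that is structurally parallel to the paper's but technically distinct. Both directions are handled with the same high-level idea: $(2)\Rightarrow(1)$ is identical (sample the circle at $N$ equally spaced points), and $(1)\Rightarrow(2)$ proceeds by compactness, with a perturbation for irrational $r$ that exploits the acyclicity of the ``exactly $\tfrac{2\pi}{r}$'' relation (your $\prec$-cycle argument is the same fact the paper uses when it observes that the progressions $\theta, \theta + \tfrac{2\pi}{r}, \ldots$ enumerate distinct angles).

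Where you genuinely diverge is the discretization step. The paper first snaps all mentioned angles to rational multiples of $2\pi$ (via a coset-translation argument when $r$ is rational, via perturbation when $r$ is irrational), then enlarges the finite set to the full regular $N$-gon $\{\tfrac{2\pi i}{N}\}_{i=0}^{N-1}$ so that the arithmetic of the $(N,k)$-cycle is exact. You instead discretize arbitrary real angles directly by $n_i = \lfloor N\theta_i/(2\pi) \rfloor$ for large $N$, which avoids both the rational-snapping step and the enlargement to an $N$-gon, at the cost of having to verify a floor estimate and handle the boundary case $\alpha = \tfrac{2\pi}{r}$ separately (choosing $N$ divisible by the denominator of $r$ in the rational case, perturbing in the irrational case). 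Your floor estimate is correct: for strict $\alpha < \tfrac{2\pi}{r}$ and $N$ large one gets $(n_j - n_i) \bmod N = \lfloor \epsilon_i + N\alpha/(2\pi) \rfloor \in [1,k]$; on the boundary with $N/r = k$ an integer one gets exactly $k$. The tradeoff: your version is slightly more elementary (no coset bookkeeping, no artificial enlargement) but requires the careful floor analysis and the observation that distinctness of the $n_i$ is needed so one is never forced to assert $R(b,b)$; the paper's version trades that care for a cleaner final computation where $k$ is exactly the number of $N$-gon vertices within $\tfrac{2\pi}{r}$.
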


\begin{proof}
    (2 $\Rightarrow$ 1): It suffices to show that if $R(x, y)$ is a definable relation, $\frac{N}{k} \geq r$, and there exist $\{a_{\theta}\}_{\theta \in S^{1}}$ such that $\models R(a_{\theta}, a_{\psi})$ for all $\theta, \psi \in S^{1}$ with $\psi$ lying at most $\frac{2\pi}{r}$ radians counterclockwise from $\theta$, then $R(x, y)$ has a $(N, k)$-cycle. For $0 \leq i \leq N-1$, let $\theta_{i}$ be $\frac{2\pi i}{N}$ radians, and define $b_{i} := a_{\theta_{i}}$. Then $b_{0}, \ldots, b_{N-1}$ is an $(N, k)$-cycle.

    (1 $\Rightarrow$ 2): It suffices to show that if $R(x, y)$ has an $(N, k)$-cycle for every $r \leq \frac{N}{k}$, the set

    $$\{R(x_{\theta}, x_{\psi}): \theta, \psi \in S^{1}, \psi \mathrm{\: lies \: at \: most \:}\frac{2\pi}{r}\mathrm{\: radians \: counterclockwise \: from \:} \theta \}$$

    is consistent. By compactness, it suffices to show that any finite subset $S$ of this set is consistent.
    
    Let us first reduce to the case that, for all variables $x_{\theta}$ mentioned in the sentences in $S$, $\theta$ is a rational multiple of $2\pi$ radians.
    
    If $r$ is irrational, we first reduce to the subcase that, for any two angles $\theta_{1}$, $\theta_{2}$ in the set of angles $\theta$ where $x_{\theta}$ is mentioned in $S$, $\theta_{1}$ does not lie \textit{exactly} $\frac{2\pi}{r}$ radians counterclockwise from $\theta_{2}$. We can partition this set of angles into maximal progressions of the form $\theta, \theta + \frac{2\pi}{r} \mathrm{\: rad}, \ldots, \theta + \frac{2n\pi}{r} \mathrm{\: rad} $. Then for any $\theta_{1}$, $\theta_{2}$ in this set such that $\theta_{1}$ lies \textit{exactly} $\frac{2\pi}{r}$ many radians counterclockwise from $\theta_{2}$, $\theta_{1}$ and $\theta_{2}$ belong to the same element of this partition, and are successive terms of the progression. We may assume that at least one of the elements of the partition isn't a singleton, because otherwise we are done with the reduction to this subcase. Since $r$ is irrational, for $\theta, \theta + \frac{2\pi}{r} \mathrm{\: rad}, \ldots, \theta + \frac{2n\pi}{r} \mathrm{\: rad} $ a non-singleton element of the partition, this sequence actually enumerates a set of $n+1$ distinct angles, so for sufficiently small $ \varepsilon > 0$, replacing $\theta + \frac{2k\pi}{r} \mathrm{\: rad}  $ with $\theta  k(\frac{2\pi}{r} - \varepsilon) \mathrm{\: rad}$ within this element of the partition, while keeping angles the same outside of this element of this partition, gives us a one-to-one correspondence between the original set of angles and a new set of angles. Since the original set is finite, we may also choose $\varepsilon$ to be small enough that, when two angles $\theta_{1}$, $\theta_{2}$ from the original set are such that $\theta_{1}$ lies \textit{less than} $\frac{2\pi}{r}$ radians counterclockwise from $\theta_{2}$, then the same is true of their replacements $\theta'_{1}$ and $\theta'_{2}$, while when $\theta_{1}$, $\theta_{2}$ from the original set are such that $\theta_{1}$ \textit{does not} lie at most $\frac{2\pi}{r}$ radians counterclockwise from $\theta_{2}$, the same is true of their replacements $\theta'_{1}$ and $\theta'_{2}$. When $\theta_{1}$, $\theta_{2}$ from the original set are such that $\theta_{1}$ lies \textit{exactly} $\frac{2\pi}{r}$ radians counterclockwise from $\theta_{2}$, and are therefore successive terms in the same element of the partition, they are either successive terms in some other element of the partition besides the chosen progression $\theta, \theta + \frac{2\pi}{r} \mathrm{\: rad}, \ldots, \theta + \frac{2n\pi}{r} \mathrm{\: rad} $, so are unchanged, or are successive terms in $\theta, \theta + \frac{2\pi}{r} \mathrm{\: rad}, \ldots, \theta + \frac{2n\pi}{r} \mathrm{\: rad} $, so their replacements $\theta'_{1}$ and $\theta'_{2}$ are such that $\theta'_{1}$ lies \textit{less than} $\frac{2\pi}{r}$ radians counterclockwise from $\theta'_{2}$. To summarize, for any two angles $\theta_{1}$, $\theta_{2}$ in the original set, $\theta_{1}$ lies at most $\frac{2\pi}{r}$ radians counterclockwise from $\theta_{2}$ if and only if the same is true for their replacements $\theta'_{1}$, $\theta'_{2}$, while the new set has fewer instances of an angle lying \textit{exactly} $\frac{2\pi}{r}$ radians counterclockwise from $\theta_{2}$. By induction on the number of these instances (making the corresponding replacements of sentences in $S$) we have successfully reduced to the case that, for any two angles $\theta_{1}$, $\theta_{2}$ belonging to the set of all angles $\theta$ such that $x_{\theta}$ is mentioned in $S$, $\theta_{1}$ does not lie \textit{exactly} $\frac{2\pi}{r}$ radians counterclockwise from $\theta_{2}$.
    
    Now that we are in this subcase, for any angle $\theta$ in this set that is not a rational multiple of $2\pi$ radians, we may, by density of the rational numbers, choose an angle $\theta'$ that \textit{is} a rational multiple of $2\pi$ radians so that the one-to-one correspondence given by replacing $\theta$ with $\theta'$, while keeping all of the other angles in the set the same, preserves whether one angle lies less than $\frac{2\pi}{r}$ radians counterclockwise from the other, whether one angle lies exactly $\frac{2\pi}{r}$ radians counterclockwise from the other, or whether one angle does not lie at most $\frac{2\pi}{r}$ radians counterclockwise from the other. By induction on the number of angles that are not rational multiples of $2\pi$ radians, we have successfully reduced to the case that all angles $\theta$ such that $x_{\theta}$ is mentioned in $S$ are rational multiples of $2\pi$ radians.

    If $r$ is rational, then we can partition the set of angles $\theta$ such that $x_{\theta}$ is mentioned in the sentences in $S$ according to the equivalence relation of lying in the same $R^{1}$-coset of $S^{1}$, where $R^{1}$ is the group of rational multiples of $2\pi$ radians in $S^{1}$. Since $r$ is rational, if $\theta_{1}$ lies exactly $\frac{2\pi}{r}$ many radians counterclockwise from $\theta_{2}$, then $\theta_{1}$ and $\theta_{2}$ must be in the same class. So if one (and therefore all) of the angles in some class $E$ is not a rational multiple of $2\pi$ radians, by the density of the rationals we can find a one-to-one correspondence, given by translating all of $E$ by some small enough translation while keeping all of the other angles the same, that replaces all of the elements of $E$ with rational multiples of $2\pi$ radians while preserving whether one angle lies at most $\frac{2\pi}{r}$ radians counterclockwise from the other. Repeating this, we have successfully reduced to the case that all angles $\theta$ such that $x_{\theta}$ is mentioned in $S$ are rational multiples of $2\pi$ radians.

    But then, if all angles $\theta$ such that $x_{\theta}$ is mentioned in $S$ are rational multiples of $2\pi$ radians, we may assume that the set of angles $\theta$ such that $x_{\theta}$ is mentioned in $S$ is equal, for some integer $N$, to the set $\{\theta_{i}\}^{N-1}_{i = 0}$, where $\theta_{i}$ is $\frac{2i\pi}{N}$ radians. Let $k = \lfloor \frac{N}{r} \rfloor$; then for each $\theta_{i}$, $k$ is equal to the number of other $\theta_{j}$ lying at most $\frac{2\pi}{r}$ radians counterclockwise from $\theta$, and $\frac{N}{k} \geq r$. So by assumption, we can find an $(N, k)$-cycle $b_{0}, \ldots b_{N-1}$ for $R(x, y)$. Then, instantiating $b_{i}$ in the variable $x_{\theta_{i}}$, we realize $S$, so $S$ is consistent.
\end{proof}

We now present Definition \ref{real-valued hierarchy} again, along with some equivalent conditions. Note that, by these equivalent conditions and Observation \ref{restatement of NSOP_n}, this definition agrees with Shelah's original $\mathrm{NSOP}_{n}$ hierarchy for $n \geq 3$ a real number (Definition \ref{original hierarchy}).

\begin{definition}\label{real-valued hierarchy, detailed definition}
    Let $r\geq 3$ be a real number. A theory $T$ is $\mathrm{NSOP}_{r}$ if the following equivalent conditions hold: 

    (1) For every definable relation $R(x_{1}, x_{2})$ with a sequence $\{a_{i}\}_{i < \omega}$ such that $\models R(a_{i}, a_{j})$ for all $i < j$, for all integers $N < \omega$ there are $b_{0}, \ldots, b_{N -1} $ such that $\models R(b_{i}, b_{i+j \mathrm{\: mod \:} N})$ for $0 \leq i <N$, $1 \leq j \leq \frac{N}{r}$.

    (1$'$) For every definable relation $R(x_{1}, x_{2})$ with a sequence $\{a_{i}\}_{i < \omega}$ such that $\models R(a_{i}, a_{j})$ for all $i < j$, for all \textit{sufficiently large} integers $N < \omega$ there are $b_{0}, \ldots, b_{N -1} $ such that $\models R(b_{i}, b_{i+j \mathrm{\: mod \:} N})$ for $0 \leq i <N$, $1 \leq j \leq \frac{N}{r}$.

    (2) For every definable relation $R(x_{1}, x_{2})$ with a sequence $\{a_{i}\}_{i < \omega}$ such that $\models R(a_{i}, a_{j})$ for all $i < j$, there exists a set $\{a_{\theta}\}_{\theta \in S^{1}}$, indexed by the unit circle $S^{1}$, such that $\models R(a_{\theta}, a_{\psi})$ for all $\theta, \psi \in S^{1}$ with $\psi$ lying at most $\frac{2\pi}{r}$ radians counterclockwise from $\theta$.

 Otherwise it has $\mathrm{SOP}_{r}$.
\end{definition}

We first prove the equivalence (1) $\Leftrightarrow$ (2). Condition (1) says that $T$ has property (*)$_{N, k}$ (as in the statement of Lemma \ref{(N, k)-cycle comparison} for all integers $N$, $k$ with $k \leq \frac{N}{r}$, but $k \leq \frac{N}{r}$ is equivalent to $r \leq \frac{N}{k}$, so (1) is equivalent to (2) by Lemma \ref{finitary and infinitary conditions}.

Now we prove the equivalence (1) $\Leftrightarrow$ (1$'$), reflecting the robustness of the definition. By the previous paragraph, condition (1) says that $T$ has property (*)$_{N, k}$ (as in the statement of Lemma \ref{(N, k)-cycle comparison}) for all integers $N$, $k$ with $r \leq \frac{N}{k}$, while condition (2) says that $T$ has property (*)$_{N, k}$ (as in the statement of Lemma \ref{(N, k)-cycle comparison}) for all integers $N$, $k$ with $r \leq \frac{N}{k}$, as long as $N$ is sufficiently large. But the property (*)$_{N, k}$ is equivalent to (*)$_{N', k'}$ for some $N', k'$ with $N'$ sufficiently large, by Claim \ref{multiply numerator and denominator}.

\begin{remark}\label{rationals and irrationals}
    Rational and irrational values of $r$ have different statuses within Definition \ref{real-valued hierarchy, detailed definition}. When $r \geq 3$ is a rational number, the property $\mathrm{NSOP}_{r}$ is equivalent to

    (1$'_{\mathbb{Q}}$) For every definable relation $R(x_{1}, x_{2})$ with a sequence $\{a_{i}\}_{i < \omega}$ such that $\models R(a_{i}, a_{j})$ for all $i < j$, for some integer $N < \omega$ that is a multiple of $r$ there are $b_{0}, \ldots, b_{N -1} $ such that $\models R(b_{i}, b_{i+j \mathrm{\: mod \:} N})$ for $0 \leq i <N$, $1 \leq j \leq \frac{N}{r}$.

To see this, note that (1$'_{\mathbb{Q}}$) says that, for some $N$ such that $\frac{N}{r}$ is an integer, $T$ satisfies (*)$_{N, \frac{N}{r}}$. But then, by Lemma \ref{(N, k)-cycle comparison}, $T$ satisfies (*)$_{N', k'}$ whenever $r \leq \frac{N'}{k'}$, so (1$'_{\mathbb{Q}}$) is equivalent to (1).

It follows from this equivalence that, when $n, m$ are integers and $\frac{n}{m} \geq 3$, the property $\mathrm{NSOP}_{\frac{n}{m}}$ is equivalent to the condition that:

For every definable relation $R(x_{1}, x_{2})$ with a sequence $\{a_{i}\}_{i < \omega}$ such that $\models R(a_{i}, a_{j})$ for all $i < j$, there are $b_{0}, \ldots, b_{n -1} $ such that $\models R(b_{i}, b_{i+j \mathrm{\: mod \:} n})$ for $0 \leq i <n$, $1 \leq j \leq m$.

On the other hand, when $r \geq 3$ is an irrational number, the definition of $\mathrm{NSOP}_{r}$ exhibits additional robustness. It is equivalent to

(1$'_{\mathbb{R} \backslash \mathbb{Q}}$) For every definable relation $R(x_{1}, x_{2})$ with a sequence $\{a_{i}\}_{i < \omega}$ such that $\models R(a_{i}, a_{j})$ for all $i < j$, for \textit{arbitrarily large} integers $N < \omega$ there are $b_{0}, \ldots, b_{N -1} $ such that $\models R(b_{i}, b_{i+j \mathrm{\: mod \:} N})$ for $0 \leq i <N$, $1 \leq j \leq \frac{N}{r}$.

To see this, note that the condition (1$'_{\mathbb{R} \backslash \mathbb{Q}}$) says that, for arbitrarily large $N$, for any $k$ such that $r \leq \frac{N}{k}$, $T$ has (*)$_{N, k}$. But since $r$ is irrational, for any rational number $q \geq r$, we then may choose such $N, k$ such that $\frac{N}{k} < q$. By Lemma \ref{(N, k)-cycle comparison}, (1$'_{\mathbb{R} \backslash \mathbb{Q}}$) then implies that $T$ satisfies (*)$_{N', k'}$ whenever $r \leq \frac{N'}{k'}$, so is equivalent to (1).

Moreover, the property $\mathrm{NSOP}_{r}$ for $r \geq 3$ irrational can be defined in terms of the properties $\mathrm{NSOP}_{q}$ for $q \geq 3$ rational. Specifically, for $r \geq 3$ irrational,

$$\mathrm{SOP}_{r} = \bigcup_{r < q \in \mathbb{Q}}\mathrm{SOP}_{q}$$.

This is because, if $T$ has $\mathrm{SOP}_{r}$, it must fail (*)$_{N, k}$ for some integers $N, k$ with $r < \frac{N}{k}$, so must have $\mathrm{SOP}_{\frac{N}{k}}$.

By this (i.e., $\mathrm{NSOP}_{r} = \bigcap_{r < q \in \mathbb{Q}}\mathrm{NSOP}_{q}$) and compactness, when $r \geq 3$ is irrational, $\mathrm{NSOP}_{r}$ is also equivalent to

(2$_{\mathbb{R} \backslash \mathbb{Q}}$) For every definable relation $R(x_{1}, x_{2})$ with a sequence $\{a_{i}\}_{i < \omega}$ such that $\models R(a_{i}, a_{j})$ for all $i < j$, there exists a set $\{a_{\theta}\}_{\theta \in S^{1}}$, indexed by the unit circle $S^{1}$, such that $\models R(a_{\theta}, a_{\psi})$ for all $\theta, \psi \in S^{1}$ with $\psi$ lying \textit{less than} $\frac{2\pi}{r}$ radians counterclockwise from $\theta$.

\end{remark}

\begin{remark}\label{ascending chain}
    The original $\mathrm{NSOP}_{n}$ hierarchy also includes the properties $\mathrm{NSOP}_{1}$ and $\mathrm{NSOP}_{2}$, defined by Shelah in \cite{Sh90}, \cite{Sh99} and by Džamonja and Shelah in \cite{DS04}. Together, these properties form an ascending chain:

    $$\mathrm{NSOP}_{1} = \mathrm{NSOP}_{2} \subseteq \mathrm{NSOP}_{3} \subsetneq \ldots \subsetneq \mathrm{NSOP}_{n} \subsetneq \mathrm{NSOP}_{n+1} \subsetneq \ldots$$

    It is clear from the definition that the properties $\mathrm{NSOP}_{r}$, for $r, s \geq 3$ a real number, extend this hierarchy: for real numbers $r, s \geq 3$, $\mathrm{NSOP}_{r} \subseteq \mathrm{NSOP}_{s}$ whenever $r \geq s$. This is not known to be a strict inclusion when s and r lie between the same pair of successive integers: see Question \ref{distinctness} below.

    The reason for the assumption $r \geq 3$ in the definition is that, when, say, $2 < \frac{N}{k} < 3$, it is not known that $\mathrm{NSOP}_{2}$ implies (*)$_{N, k}$; therefore, extending Definition \ref{real-valued hierarchy, detailed definition} to values less than $3$ may not preserve the fact that this hierarchy is an ascending chain.

\end{remark}

As stated in the introduction, it is open whether, by extending Shelah's original $\mathrm{NSOP}_{n}$ hierarchy from integer values of $n \geq 3$ to real values greater than $3$, we have actually properly extended the hierarchy:

\begin{question}\label{distinctness}

Is the $\mathrm{NSOP}_{r}$ hierarchy for $r \geq 3$ a real number distinct from the $\mathrm{NSOP}_{n}$ hierarchy for $n \geq 3$ an integer? That is, is there some real number $r \geq 3$ such that the class of $\mathrm{NSOP}_{r}$ theories is distinct from the class of $\mathrm{NSOP}_{n}$ theories for all $n \geq 3$?
\end{question}


\subsection{Hereditary classes}\label{hereditary class subsection}

In this section, we extend the preceding discussion to hereditary classes of relational structures, obtaining a new, real-valued combinatorial quantity associated with any hereditary class. From the point of view of logic, investigating the properties $\mathrm{NSOP}_{r}$ for $r \geq 3$ a real number in the setting of hereditary classes is the same (see Remark \ref{hereditary classes and quantifier-free formulas}) as investigating the properties $\mathrm{NSOP}_{r}$ for $r \geq 3$ a real number at the \textit{quantifier-free} level in any theory. Even at this syntactic level, the distinctness of the properties $\mathrm{NSOP}_{r}$ for $r \geq 3$ a real number and the properties $\mathrm{NSOP}_{n}$ for $n \geq 3$ a natural number remains open. However, stating this problem in the language of hereditary classes will allow us to make use of combinatorial assumptions that are specific to the hereditary class setting, making the problem more tractable (as we will see in the next two sections) and leading to results of independent combinatorial interest.

In what follows, we will consider hereditary classes in a finite relational language; the results of this paper can be straightforwardly generalized to infinite languages.

\begin{definition}\label{hereditary class}

Let $\mathcal{L}$ be a finite relational language. A hereditary class of $\mathcal{L}$-structures is a class $\mathcal{H}$ of finite $\mathcal{L}$-structures, considered up to isomorphism, such that if $B \in \mathcal{H}$, and $A \subseteq B$ is an (induced) substructure of $B$, then $A \in \mathcal{H}$.
    
\end{definition}

In the setting of a hereditary class $\mathcal{H}$ of $\mathcal{L}$-structures, we adapt the standard model-theoretic notation to $\mathcal{L}$-structures: when referring to a $\mathcal{L}$-structure $A$, $A$ may be understood as an ordered tuple (so $A \in \mathcal{H}$ means that its underlying $\mathcal{L}$-structure belongs to $\mathcal{H}$, $B \subseteq A$ will be enumerated so that the resulting ordering agrees with the ordering given by the enumeration of $A$, for a one-to one map $\iota: A \hookrightarrow B$, the enumeration of $B$ is compatible according to $\iota$ with the enumeration on $A$, etc.); this will be clear from context. Moreover, we may write $AB$ to refer to an $\mathcal{L}$-structure that is the union of subtructures $A$ and $B$; we may also speak of a sequence $\{A_{i}\}_{i \in I}$ interchangeably with the union of its terms, again in a way that will be clear from context. Crucially, for $\mathcal{L}$-substructures of the form $AC$ and $BC$ where $C$ is a common substructure, we write $A \equiv_{C} B$ to mean that the enumerations of $AC$ and $BC$ give an isomorphism of $\mathcal{L}$-structures; that is, if $AC$ and $BC$ are finite $\mathcal{L}$-structures enumerated as $AC = (a_{1}, \ldots a_{n} ) $, $BC = (b_{1}, \ldots b_{n}) $ with $a_{i} \in C$ if and only if $b_{i} \in C$ and $a_{i} = b_{i}$, then for any $n$-ary relation symbol $R \in \mathcal{L}$ (including equality), $(a_{i_{1}}, \ldots a_{i_{n}}) \in R(AC)$ if and only if $(b_{i_{1}}, \ldots b_{i_{n}}) \in R(AC)$. In the language of model theory, $A \equiv_{C} B$ means that the ordered tuples $AC$ and $BC$ have the same \textit{quantifier-free} $\mathcal{L}$-type.

Finally, abusing notation, for any hereditary class $\mathcal{H}$ of $\mathcal{L}$-structures and any \textit{infinite} $\mathcal{L}$-structure $A$, we use $A \in \mathcal{H}$ to denote that, for every \textit{finite} substructure $A_{0} \subset A$, $A_{0} \in \mathcal{H}$.

We will define the properties $\mathrm{NSOP}_{r}$ for $r \geq 3$ a real number, for any hereditary class $\mathcal{H}$, by straightforwardly translating Definition \ref{real-valued hierarchy, detailed definition} from the setting of first-order logic to the setting of hereditary classes. This will give us a real-valued combinatorial quantity $\mathfrak{o}(\mathcal{H})$ associated with any hereditary class. While $\mathfrak{o}(\mathcal{H})$ will be defined in terms of the properties $\mathrm{NSOP}_{r}$ for $r \geq 3$ a real number, we present its definition explicitly first, to isolate it as a quantity of independent combinatorial interest.

\begin{definition}\label{real value of a hereditary class}
Let $\mathcal{H}$ be a hereditary class of $\mathcal{L}$-structures.

        Let $\mathcal{H}$ be a hereditary class. Then $\mathfrak{o}(\mathcal{H})$ is the (possibly infinite) supremum of the real values $r \geq 3$ such that there exists some finite $\mathcal{L}$-structure of the form $AB$ such that

    \begin{itemize}
        \item There exists a sequence $\{ A_{i}\}_{i < \omega} \in \mathcal{H}$ (more explicitly, an infinite $\overline{A} \in \mathcal{H}$, and $\{ A_{i}\}_{i < \omega}$ with the tuples $A_{i} \subseteq \overline{A}$), such that $A_{i}A_{j} \equiv AB$ for all $i < j$.
        \item There does not exist a set $\{ A_{\theta}\}_{\theta \in S^{1}} \in \mathcal{H}$ (more explicitly, an infinite $\overline{A} \in \mathcal{H}$, and $\{ A_{\theta}\}_{\theta \in S^{1}}$ with the tuples $A_{\theta} \subseteq \overline{A}$) indexed by the unit circle $S^{1}$, with $A_{\theta}A_{\psi} \equiv AB$ for all $\theta, \psi \in S^{1}$ with $\psi$ lying at most $\frac{2\pi}{r}$ radians counterclockwise from $\theta$.
    \end{itemize}
\end{definition}

Now we translate Definition \ref{real-valued hierarchy, detailed definition}:

    \begin{definition}\label{real-valued hierarchy, hereditary class version}
    Let $r\geq 3$ be a real number. A hereditary class $\mathcal{H}$ of $\mathcal{L}$-structures is $\mathrm{NSOP}_{r}$ if the following equivalent conditions hold: 

    (1) For every finite $\mathcal{L}$-structure of the form $AB$ with a sequence $\{A_{i}\}_{i < \omega} \in \mathcal{H}$ such that $A_{i}A_{j} \equiv AB$ for all $i < j$, for all integers $N < \omega$ there are $A_{0}, \ldots, A_{N -1} $ such that $A_{i} A_{i+j \mathrm{\: mod \:} N} \equiv AB$ for $0 \leq i <N$, $1 \leq j \leq \frac{N}{r}$.

    (1$'$) For every finite $\mathcal{L}$-structure of the form $AB$ with a sequence $\{A_{i}\}_{i < \omega} \in \mathcal{H}$ such that $A_{i}A_{j} \equiv AB$ for all $i < j$, for all \textit{sufficiently large} integers $N < \omega$ there are $A_{0}, \ldots, A_{N -1} $ such that $A_{i} A_{i+j \mathrm{\: mod \:} N} \equiv AB$ for $0 \leq i <N$, $1 \leq j \leq \frac{N}{r}$.

    (2) For every finite $\mathcal{L}$-structure of the form $AB$ with a sequence $\{A_{i}\}_{i < \omega} \in \mathcal{H}$ such that $A_{i}A_{j} \equiv AB$ for all $i < j$, there exists a set $\{ A_{\theta}\}_{\theta \in S^{1}} \in \mathcal{H}$ indexed by the unit circle $S^{1}$ with $A_{\theta}A_{\psi} \equiv AB$ for all $\theta, \psi \in S^{1}$ with $\psi$ lying at most $\frac{2\pi}{r}$ radians counterclockwise from $\theta$.

 Otherwise it has $\mathrm{SOP}_{r}$.
\end{definition}

Remark \ref{rationals and irrationals} applies equally as well to this definition, and the properties $\mathrm{NSOP}_{r}$ for $r \geq 3$ form an ascending chain as in Remark 2.8. We see from Definitions \ref{real value of a hereditary class} and \ref{real-valued hierarchy, hereditary class version} that $\mathfrak{o}(\mathcal{H})$ is just the supremum of the values $r \geq 3$ such that $\mathcal{H}$ is $\mathrm{SOP}_{r}$.

\begin{remark}\label{hereditary classes and quantifier-free formulas}
    As mentioned in the beginning of this section, the $\mathrm{NSOP}_{r}$ hierarchy for hereditary classes is equivalent to the $\mathrm{NSOP}_{r}$ hierarchy \textit{at the quantifier-free} level for theories: let $T$ be a theory (say, in a finite language $\mathcal{L}$). Then the hereditary class $\mathcal{H}$ of finite $\mathcal{L}$-substructures of models of $T$ is $\mathrm{NSOP}_{r}$ if and only if $T$ is $\mathrm{NSOP}_{r}$ \textit{at the quantifier-free level}: for every \textit{quantifier-free} definable relation $R(x_{1}, x_{2})$ with a sequence $\{a_{i}\}_{i < \omega}$ such that $\models R(a_{i}, a_{j})$ for all $i < j$, there exists a set $\{a_{\theta}\}_{\theta \in S^{1}}$ such that $\models R(a_{\theta}, a_{\psi})$ for all $\theta, \psi \in S^{1}$ with $\psi$ lying at most $\frac{2\pi}{r}$ radians counterclockwise from $\theta$. To see this, note that the condition on $\mathcal{H}$ is equivalent to saying: for every definable relation $R(x_{1}, x_{2})$ \textit{describing a complete quantifier-free type} with a sequence $\{a_{i}\}_{i < \omega}$ such that $\models R(a_{i}, a_{j})$ for all $i < j$, there exists a set $\{a_{\theta}\}_{\theta \in S^{1}}$, indexed by the unit circle $S^{1}$, such that $\models R(a_{\theta}, a_{\psi})$ for all $\theta, \psi \in S^{1}$ with $\psi$ lying at most $\frac{2\pi}{r}$ radians counterclockwise from $\theta$. So the hereditary class condition implies the quantifier-free condition. The opposite direction is just the standard translation from formulas to types (see e.g. \cite{She95}): let the quantifier-free definable relation $R(x_{1}, x_{2})$, with a sequence $\{a_{i}\}_{i < \omega}$ such that $\models R(a_{i}, a_{j})$ for all $i < j$, exhibit $\mathrm{SOP}_{r}$. Extracting an indiscernible sequence, all of the $a_{i}a_{j}$ for $i < j$ can be assumed to have the same quantifier-free type, described by a formula $R'(x, y)$. But since $R'(x, y)$ is stronger than $R(x, y)$, there can be no $\{a_{\theta}\}_{\theta \in S^{1}}$ as in the definition of $\mathrm{SOP}_{r}$, so $R'(x, y)$ is a formula describing a complete quantifier-free type exhibiting $\mathrm{SOP}_{r}$, and the hereditary class condition fails.
\end{remark}

It is even open whether the real-valued and integer-valued hierarchies are distinct at this quantifier-free level:

\begin{question}\label{hereditary class distinctness}
    Let $\mathcal{H}$ be a hereditary class of $\mathcal{L}$-structures. Is $\mathfrak{o}(\mathcal{H})$ an integer? Is the $\mathrm{NSOP}_{r}$ hierarchy for $r \geq 3$ a real number distinct from the $\mathrm{NSOP}_{n}$ hierarchy for $n \geq 3$ an integer \emph{for hereditary classes}?
\end{question}

Note that the question of whether the hierarchies are distinct is slightly different from the question of whether $\mathfrak{o}(\mathcal{H})$ is an integer: it is conceivable that there is a hereditary class $\mathcal{H}$ such that, for some integer $n > 3$, $\mathcal{H}$ is $\mathrm{NSOP}_{n}$ but has $\mathrm{SOP}_{r}$ for all $r < n$. Then $\mathfrak{o}(\mathcal{H}) = n$, but the real-valued and integer-valued hierarchies are distinct, because all currently known $\mathrm{NSOP}_{n+1}$ hereditary classes with $\mathrm{SOP}_{n}$ (including all those defined by a finite family of forbidden weakly embedded substructures; see next section) are also $\mathrm{NSOP}_{r}$ for all real numbers $r > n$.

More generally, $\mathfrak{o}(\mathcal{H})$ determines whether $\mathcal{H}$ is $\mathrm{NSOP}_{r}$ for all real values $r \geq 3$, except for at most one: if $\mathfrak{o}(\mathcal{H}) = r $ is rational, it is conceivable that $\mathcal{H}$ either is $\mathrm{NSOP}_{r}$ or has $\mathrm{SOP}_{r}$. However, if $\mathfrak{o}(\mathcal{H}) = r$ is irrational, $\mathcal{H}$ must be $\mathrm{NSOP}_{r}$, by Remark \ref{rationals and irrationals}.

In later sections, we will consider Question \ref{hereditary class distinctness}. The next section will answer this question for all hereditary classes defied by a \textit{finite} family of forbidden weakly embeddded substructures. To give additional motivation for this result, we will show that Question \ref{hereditary class distinctness} can be reduced to the case of a hereditary class defined by a (potentially infinite) family of forbidden weakly embedded substructures.

Recall the definition of \textit{weak embedding}:

\begin{definition}\label{weak embedding}
    Let $\mathcal{L}$ be a finite relational language. Then a one-to-one map $\iota: A \hookrightarrow B$ between $\mathcal{L}$-structures is a \emph{weak embedding} if, for $R$ any relation symbol of $\mathcal{L}$,

    $$(a_{1}, \ldots, a_{n}) \in R(A) \Rightarrow (\iota(a_{1}), \ldots, \iota(a_{n})) \in R(B)$$.

\end{definition}

Note that the reverse implication does not hold.

Hereditary classes are often defined by a family of forbidden substructures (such as the class of $K_{n}$-free graphs); in what follows, we will be interested in hereditary classes defined by families of forbidden \textit{weakly embedded} substructures. For more on classes of structures defined by forbidden substructures, and their connections to model theory, see Cherlin, Shelah and Shi (\cite{CSS99}).

\begin{definition}\label{class of structures omitting a weakly embedded substructure}
    Let $\mathcal{F}$ be a family of finite structures in a finite relational language $\mathcal{L}$. We define $\mathcal{H}(\mathcal{F})$ to be the hereditary class of $\mathcal{L}$-structures $A$ such that there is no $B \in \mathcal{F}$ with a weak embedding $\iota: B \hookrightarrow A$.
\end{definition}

\begin{definition}\label{defined by a (finite) family of forbidden weakly embedded substructures}
    A hereditary class $\mathcal{H}$ of $\mathcal{L}$-structures is \textit{defined by a family of forbidden weakly embedded substructures} (or alternatively, is \textit{closed under weak embeddings}) if $\mathcal{H} = \mathcal{H}(\mathcal{F})$ for some family $\mathcal{F}$ of $\mathcal{L}$-structures (equivalently, is such that, if $A \in \mathcal{H}$ and $\iota: B \hookrightarrow A$ is a weak embedding, then $B \in \mathcal{H}$).

    A hereditary class $\mathcal{H}$ of $\mathcal{L}$-structures is \textit{defined by a finite family of forbidden weakly embedded substructures} if $\mathcal{H} = \mathcal{H}(\mathcal{F})$ for some finite family $\mathcal{F}$ of $\mathcal{L}$-structures.
\end{definition}

To see the equivalence of the two conditions for the first definition, if $\mathcal{H} = \mathcal{H}(\mathcal{F})$ for some family $\mathcal{F}$ of $\mathcal{L}$-structures, then $\mathcal{H}$ is closed under weak embeddings because weak embeddings are closed under composition. Conversely, if $\mathcal{H}$ is closed under weak embeddings, then $\mathcal{H} = \mathcal{H}(\mathcal{F})$, where $\mathcal{F}$ is the complement of $\mathcal{H}$ in the class of all $\mathcal{L}$-structures.

We now see that the question of whether $\mathfrak{o}(\mathcal{H})$ is an integer, or whether the real-valued and integer-valued hierarchies are distinct at the level of hereditary classes, reduces to the case of a hereditary class closed under weak embeddings:

\begin{prop}\label{reduction to the case of a hereditary class closed under weak embeddings}
    Let $\mathcal{H}$ be a hereditary class of $\mathcal{L}$-structures. Then there is some finite language $\mathcal{L}'$, and some hereditary class $\mathcal{H}'$ of $\mathcal{L}'$-structures defined by a family of forbidden weakly embedded substructures, such that $\mathfrak{o}(\mathcal{H})= \mathfrak{o}(\mathcal{H}')$, and in fact such that, for any real number $r \geq 3$, $\mathcal{H}$ is $\mathrm{NSOP}_{r}$ if and only if $\mathcal{H}'$ is $\mathrm{NSOP}_{r}$.
\end{prop}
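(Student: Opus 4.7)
The plan is to Morleyize the structures in $\mathcal{H}$ into a larger finite language where weak embeddings become strong embeddings. Specifically, let $r_{\max}$ be the maximum arity of a relation symbol in $\mathcal{L}$ (taken to be at least $2$, so that equality is accounted for), and let $\mathcal{L}'$ be the finite relational language with one $n$-ary relation symbol $R_p$ for each $n \leq r_{\max}$ and each complete quantifier-free $\mathcal{L}$-type $p(x_1,\ldots,x_n)$ over $\emptyset$ (finitely many, since $\mathcal{L}$ is finite relational). To each $\mathcal{L}$-structure $A$ I would associate the $\mathcal{L}'$-structure $A'$ on the same universe in which $R_p$ is interpreted as the set of tuples realizing $p$ in $A$, and set $\mathcal{H}' := \{A' : A \in \mathcal{H}\}$. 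The key point driving the whole argument is the standard fact that, in any finite relational language, the quantifier-free type of an arbitrarily long tuple is determined by the quantifier-free types of its sub-tuples of length at most $r_{\max}$. Consequently, $A \mapsto A'$ is a bijection (on the level of isomorphism classes of enumerated structures) that preserves and reflects the equivalence $\equiv$ of enumerated structures used in Definitions \ref{real value of a hereditary class} and \ref{real-valued hierarchy, hereditary class version}.

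With this setup, I would check that $\mathcal{H}'$ is a hereditary class of $\mathcal{L}'$-structures (immediate, since the interpretation of each $R_p$ in $A'$ restricts to its interpretation in any substructure) and closed under weak embeddings. For the latter, note that for each fixed arity $n \leq r_{\max}$ the relation symbols $R_p$ of arity $n$ \emph{partition} the set of $n$-tuples of any $\mathcal{L}'$-structure of the form $A'$. Hence if $\iota : B' \hookrightarrow A'$ is a weak embedding with $A' \in \mathcal{H}'$, weak preservation of each $R_p$ actually forces \emph{equality} of the quantifier-free $\mathcal{L}$-types of the corresponding sub-tuples of $B$ and $\iota(B)$, so that $\iota : B \hookrightarrow A$ is a strong $\mathcal{L}$-embedding. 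Since $\mathcal{H}$ is hereditary and $A \in \mathcal{H}$, we conclude $\iota(B) \in \mathcal{H}$, and hence $B \in \mathcal{H}$ and $B' \in \mathcal{H}'$.

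Finally, I would transfer $\mathrm{NSOP}_r$ between $\mathcal{H}$ and $\mathcal{H}'$. The equivalent conditions in Definition \ref{real-valued hierarchy, hereditary class version} are phrased purely in terms of the equivalences $A_i A_j \equiv AB$ and $A_\theta A_\psi \equiv AB$ and the memberships in $\mathcal{H}$ of finite or infinite indexed families, both of which are transferred across $A \mapsto A'$ by the key point above and by the definition of $\mathcal{H}'$. So $\mathcal{H}$ is $\mathrm{NSOP}_r$ if and only if $\mathcal{H}'$ is, and therefore $\mathfrak{o}(\mathcal{H}) = \mathfrak{o}(\mathcal{H}')$. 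The main obstacle, and the essential point to get right, is keeping $\mathcal{L}'$ finite while still encoding enough information: this is why we must restrict the arities of the $R_p$ to $n \leq r_{\max}$ and appeal to the determination-by-short-subtuples fact, and why we must be attentive to equality by including binary $R_p$ even when $\mathcal{L}$ has no binary (or higher) symbols, so that the information of which entries of a tuple coincide is encoded in the atomic $\mathcal{L}'$-diagram.
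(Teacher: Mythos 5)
Your Morleyization idea is in the right spirit — it is a strengthening of the paper's step of passing to the language $\mathcal{L}^{*}$ that adds, for each $R \in \mathcal{L}$, a complement predicate $\neg_{R}$, which is exactly enough to make surjective weak embeddings into isomorphisms. But there is a genuine gap in the step where you assert that $\mathcal{H}' := \{A' : A \in \mathcal{H}\}$ is closed under weak embeddings. Your argument only treats weak embeddings $\iota : B' \hookrightarrow A'$ whose \emph{source} is already of the form $B'$ for some $\mathcal{L}$-structure $B$; that is, you prove that weak $\mathcal{L}'$-embeddings between structures in the image of $A \mapsto A'$ are strong $\mathcal{L}$-embeddings, which is true. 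Closure under weak embeddings, however, quantifies over arbitrary $\mathcal{L}'$-structures $X$ with a weak embedding into some member of $\mathcal{H}'$, and such an $X$ need not have the property that the $R_{p}$ of each arity partition its tuples. Concretely, the $\mathcal{L}'$-structure on two points with all $R_{p}$ interpreted as empty weakly embeds into every $A'$ of size at least $2$, yet it is not of the form $C'$ for any $\mathcal{L}$-structure $C$ (in $C'$ the diagonal pair $(c,c)$ lies in the $R_{p}$ for the diagonal type), so it lies outside $\mathcal{H}'$. Thus $\mathcal{H}'$ as you define it is not closed under weak embeddings.

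The natural fix — replacing $\mathcal{H}'$ by its weak-embedding closure, i.e.\ all $\mathcal{L}'$-structures that weakly embed into some $A'$ with $A \in \mathcal{H}$ — is what the paper actually does, but then your final transfer paragraph no longer works as stated, because $\mathcal{H}'$ now contains structures that are not in bijection with members of $\mathcal{H}$, and membership in $\mathcal{H}'$ is no longer the trivial pullback of membership in $\mathcal{H}$. Showing that $\mathrm{NSOP}_{r}$ passes from $\mathcal{H}$ to this closure (and back) is the real content of the proposition: the ``$\mathcal{H}$ has $\mathrm{SOP}_{r}$ implies $\mathcal{H}'$ has $\mathrm{SOP}_{r}$'' direction needs a compactness argument to pull a putative circle configuration in $\mathcal{H}'$ down into $\mathcal{H}$ and a property like $(\ast)$ (surjective weak embeddings between members of $\mathcal{H}$ are isomorphisms) to see that its image still witnesses $AB$; and the converse direction requires Ramsey, compactness, and the paper's disjointification claim (Claim~\ref{circle with disjoint terms}) to build a circle configuration with pairwise-disjoint parts over a common base, on which one then overwrites a new $\mathcal{L}$-structure realizing $AB$ on each good pair. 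Your proposal does not engage with either of those arguments, so as written it does not establish the proposition.
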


\begin{proof}
   We first prove the following claim, which we use later in the proof:

   \begin{claim}\label{circle with disjoint terms}
        Let $\mathcal{H}$ be any $\mathrm{NSOP}_{r}$ hereditary class for $r \geq 3$ a real number. For every finite $\mathcal{L}$-structure of the form $AB$ with a sequence $\{A_{i}\}_{i < \omega} \in \mathcal{H}$ such that $A_{i}A_{j} \equiv AB$ for all $i < j$, and for $C = A \cap B$, there exists a set $\{ A_{\theta}\}_{\theta \in S^{1}} \in \mathcal{H}$ of $A_{\theta} \supset C$ (i.e., with $C$ as an induced substructure of $A_{i}$), with $A_{\theta}A_{\psi} \equiv_{C} AB$ for all $\theta, \psi \in S^{1}$ with $\psi$ lying at most $\frac{2\pi}{r}$ radians counterclockwise from $\theta$, and such that $A_{\theta} \cap A_{\psi} = C$ for all $\theta, \psi \in S^{1}$ with $\theta \neq \psi$.
   \end{claim}
\end{proof}

\begin{proof}
    We may assume without loss of generality that $C = \emptyset$; we apply compactness arguments throughout. Fixing an arbitrary finite subset $S \subset S^{1}$, it suffices to show that there exists a set $\{ A_{\theta}\}_{\theta \in S} \in \mathcal{H}$ with $A_{\theta}A_{\psi} \equiv AB$ for all $\theta, \psi \in S$ with $\psi$ lying at most $\frac{2\pi}{r}$ radians counterclockwise from $\theta$, and such that $A_{\theta} \cap A_{\psi} = \emptyset$ for all $\theta, \psi \in S^{1}$ with $\theta \neq \psi$. We may assume that $\{A_{i}\}_{i < \omega}$ is indiscernible (in the quantifier-free sense); then we can extend it to $\{A_{i}\}_{i < \omega \times \omega} \in \mathcal{H}$ such that $A_{i}A_{j} \equiv AB$ for all $i < j < \omega \times \omega$. Define $\{\overline{A}_{i}\}_{i < \omega} : = \{A_{i \omega} \ldots A_{i\omega + n}, \ldots\}_{i < \omega}$. By $\mathrm{NSOP}_{r}$, we obtain a set $\{ \overline{A}_{\theta}\}_{\theta \in S} \in \mathcal{H}$ with $\overline{A}_{\theta}\overline{A}_{\psi} \equiv \overline{A}_{0} \overline{A}_{1}$ for all $\theta, \psi \in S$ with $\psi$ lying at most $\frac{2\pi}{r}$ radians counterclockwise from $\theta$. Inductively, finding $A_{\theta} : = (A_{i})_{\theta}$ within the $\overline{A}_{\theta} =: (A_{0})_{\theta} \ldots (A_{n})_{\theta} \ldots $ for properly chosen $i$ by a pigeonhole principle argument, we will be able to obtain $\{ A_{\theta}\}_{\theta \in S}$ as desired.
\end{proof}

Now, in the language $\mathcal{L}^{*}$ consisting of the symbols of $\mathcal{L}$ along with, for each $n$-ary $R \in \mathcal{L}$, a new $n$-ary symbol symbol $\neg_{R}$, define a new hereditary class $\mathcal{H}^{*}$ as follows: $\mathcal{H}^{*}$ consists of, for each $A \in \mathcal{H}$, the structure $A^{*}$ expanding $A$ by the predicates $\neg^{R}$ for each $n$-ary relation $R \in \mathcal{L}$, where $(a_{1}, \ldots a_{n}) \in \neg^{R}(A^{*})$ exactly when $(a_{1}, \ldots a_{n}) \notin R(A^{*})= R(A)$. Then it follows from the assumption that $\mathcal{H}$ is $\mathrm{NSOP}_{r}$ that $\mathcal{H}^{*}$ is $\mathrm{NSOP}_{r}$. Moreover, $\mathcal{H}^{*}$ has the property that every surjective weak embedding between structures in $\mathcal{H}^{*}$ is an isomorphism.

Therefore, we can assume that the hereditary class $\mathcal{H}$ of $\mathcal{L}$-structures has this property:

(*) Let $A, B \in \mathcal{H}$, and let $\iota: A \hookrightarrow B$ be a surjective weak embedding. Then $\iota$ is an isomorphism.

Let $\mathcal{H}'$ consist of those $\mathcal{L}$-structures $A$ such that there exists a weak embedding $\iota: A \hookrightarrow B$ into some $B \in \mathcal{H}$. Then $\mathcal{H}'$ is closed under weak embeddings, and we show $\mathcal{H}'$ is as desired.

First, suppose $\mathcal{H}$ has $\mathrm{SOP}_{r}$. We show that $\mathcal{H}'$ has $\mathrm{SOP}_{r}$. Let $AB$ be such that there exists a sequence $\{ A_{i}\}_{i < \omega} \in \mathcal{H}$ such that $A_{i}A_{j} \equiv AB$ for all $i < j$, but there does not exist a set $\{ A_{\theta}\}_{\theta \in S^{1}} \in \mathcal{H}$ with $A_{\theta}A_{\psi} \equiv AB$ for all $\theta, \psi \in S^{1}$ with $\psi$ lying at most $\frac{2\pi}{r}$ radians counterclockwise from $\theta$. Then $\{ A_{i}\}_{i < \omega} \in \mathcal{H}'$, so it remains to show there does not exist a set $\{ A_{\theta}\}_{\theta \in S^{1}} \in \mathcal{H}'$ with $A_{\theta}A_{\psi} \equiv AB$ for all $\theta, \psi \in S^{1}$ with $\psi$ lying at most $\frac{2\pi}{r}$ radians counterclockwise from $\theta$. But suppose there was; then there is some weak embedding $\iota: \{ A_{\theta}\}_{\theta \in S^{1}} \hookrightarrow \overline{A} \in \mathcal{H}$, by the definition of $\mathcal{H}'$ and compactness. Then, $ \{ \iota(A_{\theta})\}_{\theta \in S^{1}}=\iota(\{ A_{\theta}\}_{\theta \in S^{1}}) \in \mathcal{H}$ , and by (*), $\iota(A_{\theta})\iota(A_{\psi}) \equiv AB$ for all $\theta, \psi \in S^{1}$ with $\psi$ lying at most $\frac{2\pi}{r}$ radians counterclockwise from $\theta$--contradicting our assumption on $AB$.

Conversely, suppose $\mathcal{H}$ is $\mathrm{NSOP}_{r}$, and let $AB$ be such that there exists a sequence $\{ A_{i}\}_{i < \omega} \in \mathcal{H}'$ such that $A_{i}A_{j} \equiv AB$ for all $i < j$. We show that there exists a set $\{ A_{\theta}\}_{\theta \in S^{1}} \in \mathcal{H}$ with $A_{\theta}A_{\psi} \equiv AB$ for all $\theta, \psi \in S^{1}$ with $\psi$ lying at most $\frac{2\pi}{r}$ radians counterclockwise from $\theta$. Let $\iota: \{ A_{i}\}_{i < \omega} \hookrightarrow \overline{A}$ be a weak embedding where $\overline{A} \in \mathcal{H}$. Then $ \{ \iota(A_{\theta})\}_{\theta \in S^{1}}=\iota(\{ A_{\theta}\}_{\theta \in S^{1}}) \in \mathcal{H}$, and by replacing $\{ A_{i}\}_{i < \omega}$ with a subsequence (in which case $A_{i}A_{j} \equiv AB$ for all $i < j$ still), by Ramsey's theorem we can assume that $\iota(A_{i})\iota(A_{j}) \equiv \iota(A_{i'})\iota(A_{j'})$ whenever $i < j$ and $i' < j'$. By $\mathrm{NSOP}_{r}$ and the claim, for $C' = \iota(A_{0}) \cap \iota(A_{1})$ we can find $\{ A'_{\theta}\}_{\theta \in S^{1}} \in \mathcal{H}$ with $A'_{\theta} \supset C'$ and $A'_{\theta}A'_{\psi} \equiv_{C} \iota(A_{0})\iota(A_{1})$ for all $\theta, \psi \in S^{1}$ with $\psi$ lying at most $\frac{2\pi}{r}$ radians counterclockwise from $\theta$, and such that $A'_{\theta} \cap A'_{\psi} = C'$ for all $\theta, \psi \in S^{1}$ with $\theta \neq \psi$. By this last disjointness condition, we can define a new $\mathcal{L}$-structure, $\{ A_{\theta}\}_{\theta \in S^{1}}$, on the same underlying set as $\{ A'_{\theta}\}_{\theta \in S^{1}}$, to be the unique $\mathcal{L}$-structure satisfying the following conditions:

(1) For all $\theta, \psi \in S^{1}$ with $\psi$ lying at most $\frac{2\pi}{r}$ radians counterclockwise from $\theta$, $A_{\theta}A_{\psi} \equiv AB$.

(2) Any tuple with coordinates in (the union of) $\{ A_{\theta}\}_{\theta \in S^{1}}$, whose coordinates are not all contained in some fixed pair of the form $A_{\theta}A_{\psi}$ with $\psi$ lying at most $\frac{2\pi}{r}$ radians counterclockwise from $\theta$, belongs to no relation of $\mathcal{L}$.

Then, because $\iota|_{A_{0}A_{1}}$ is a weak embedding, the identity map $\mathrm{id}: \{ A_{\theta}\}_{\theta \in S^{1}} \hookrightarrow \{ A'_{\theta}\}_{\theta \in S^{1}}$ is a weak embedding (i.e.,  $\{ A_{\theta}\}_{\theta \in S^{1}}$ is a \textit{weak substructure} of $\{ A'_{\theta}\}_{\theta \in S^{1}}$). But then, since $\{ A_{\theta}\}_{\theta \in S^{1}} \in \mathcal{H}$, $\{ A'_{\theta}\}_{\theta \in S^{1}} \in \mathcal{H}'$, so by (1) is as desired.

\section{Integrality of $\mathfrak{o}(\mathcal{H}(\mathcal{F}))$ for $\mathcal{F}$ finite}

From Proposition \ref{reduction to the case of a hereditary class closed under weak embeddings}, we know that, if there are non-integer values of $\mathfrak{o}(\mathcal{H})$, then there are non-integer values of $\mathfrak{o}(\mathcal{H})$ for $\mathcal{H}$ a hereditary class defined by a family of forbidden weakly embedded substructures. By contrast, when $\mathcal{H}$ is defined by a \textit{finite} family of forbidden weakly embedded substructures, $\mathfrak{o}(\mathcal{H})$ must be an integer:

\begin{theorem} \label{integerality}

Let $\mathcal{H}$ be a hereditary class defined by a finite family of forbidden weakly embedded substructures. Then $\mathfrak{o}(\mathcal{H})$ is an integer.
\end{theorem}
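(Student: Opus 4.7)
The plan is to argue by contradiction that $\mathfrak{o}(\mathcal{H})$ cannot lie strictly between two consecutive integers. Suppose $\mathfrak{o}(\mathcal{H})$ is finite and not an integer, and set $n = \lceil \mathfrak{o}(\mathcal{H}) \rceil$, so that by Definition \ref{real-valued hierarchy, hereditary class version} the class $\mathcal{H}$ is $\mathrm{NSOP}_n$ while having $\mathrm{SOP}_r$ for some $r \in (n-1, n)$. By Proposition \ref{reduction to the case of a hereditary class closed under weak embeddings} (together with the further reduction to the directed-graph setting mentioned in the introduction) we may assume $\mathcal{H} = \mathcal{H}(\mathcal{F})$ is a hereditary class of directed graphs defined by the finite family $\mathcal{F}$. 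From $\mathrm{SOP}_r$ and Lemma \ref{finitary and infinitary conditions} (in the hereditary-class form) we extract a finite arc type $AB$, an indiscernible sequence $\{A_i\}_{i<\omega} \in \mathcal{H}$ with $A_iA_j \equiv AB$ for $i<j$, and positive integers $N, k$ with $N/k \geq r$ such that no $(N,k)$-cycle of type $AB$ occurs in $\mathcal{H}$. Let $G_0$ be the resulting finite omitted $(N,k)$-cycle configuration; then $G_0 \notin \mathcal{H}$, and since $\mathcal{H}$ is closed under weak embeddings, some $F \in \mathcal{F}$ weakly embeds into $G_0$.

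The next step is to apply the $\mathrm{NSOP}_n$ hypothesis through Malliaris's characterization, that $\mathrm{NSOP}_n$ is equivalent to closure under $n$-helix maps for hereditary classes of directed graphs, together with the cycle-removal property of helix maps. Closure under $n$-helix maps says contrapositively that an $n$-helix cover of an omitted graph is still omitted, and the cycle-removal property says that, given any omitted graph $G$ containing an induced directed $m$-cycle for some $m \geq n$, one can build via an $n$-helix cover an omitted graph $G'$ in which such induced $m$-cycles no longer appear. Starting from $G_0$, I would iterate this construction with one stage per cycle length $m = n, n+1, \ldots$, producing omitted structures $G_0, G_1, \ldots, G_L$ whose girth grows monotonically. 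The delicate bookkeeping is that each new helix cover must be chosen so that no induced cycle of length $\leq m$ is reintroduced at stage $m$. After sufficiently many stages, we reach an omitted $G_L$ whose girth exceeds $M := \max_{F \in \mathcal{F}} |F|$.

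The finiteness of $\mathcal{F}$ now enters decisively: any $F \in \mathcal{F}$ weakly embedded into $G_L$ has fewer than $\mathrm{girth}(G_L)$ vertices, so the image of $F$ must lie in an acyclic subgraph of $G_L$. Using the argument of Loh and Zhao (\cite{LZ17}) adapted to the weak-embedding setting, I would then pass from $G_L$ to an omitted configuration $G^{**}$ which is itself acyclic (a directed acyclic graph), still weakly contains some $F \in \mathcal{F}$, and whose arcs carry the $AB$-type inherited from the construction.

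To derive the contradiction, I would realize $G^{**}$ inside the chain $\{A_i\}_{i<\omega} \in \mathcal{H}$: topologically order the vertices (or blocks of vertices, in case $|A|>1$) of $G^{**}$ as $w_1, \ldots, w_s$, pick indices $i_1 < i_2 < \cdots < i_s$, and identify $w_t$ with $A_{i_t}$. Every arc of $G^{**}$ is forward in topological order, so by $A_{i_t}A_{i_{t'}} \equiv AB$ for $t < t'$ every arc is realized with the correct quantifier-free type; the directed-acyclic property of $G^{**}$ means no reverse arc needs to be realized. Hence $G^{**} \in \mathcal{H}$, contradicting that $G^{**}$ was omitted. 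The main obstacle will be the cycle-removal step itself: constructing, for each cycle length $m \geq n$, an $n$-helix cover which strictly increases girth without reintroducing shorter cycles. This is where the abstract categorical analysis of helix maps is required, and it is also where the adaptation of the Loh--Zhao style acyclic reduction to hereditary classes defined by forbidden weak embeddings must be carried out with care.
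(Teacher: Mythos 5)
Your proposal follows the same strategy as the paper's proof: reduce to a hereditary class of directed graphs closed under weak embeddings, iterate helix-map cycle removal to drive up the girth of the omitted configuration, then use the finiteness of $\mathcal{F}$ to extract an acyclic omitted $F \in \mathcal{F}$ which weakly embeds in the linear order $\{a_i\}_{i<\omega}$, a contradiction. The ``delicate bookkeeping'' you flag at the cycle-removal stage is exactly the content of Proposition \ref{cycle removal} and Lemma \ref{pulling back does not create cycles}: helix maps are graph homomorphisms, so pulling back can never create shorter cycles, and an inner induction enumerating all minimal-length cycles in the current graph ensures each one is killed before passing to the next girth.
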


In fact, we show that, if $\mathcal{H}$ has $\mathrm{SOP}_{r}$ for $r \geq 3$ a real number, then $\mathcal{H}$ must have $\mathrm{SOP}_{n}$ for $n = \lceil r \rceil $ the next integer. Therefore, there is no hereditary class, defined by a finite family of forbidden weakly embedded substructures, exhibiting that the $\mathrm{NSOP}_{r}$ hierarchy for real $r \geq 3$ is distinct from the $\mathrm{NSOP}_{n}$ hierarchy for $n \geq 3$ at the level of hereditary classes (question \ref{hereditary class distinctness}).

\begin{example}\label{examples of hereditary classes}
    For $n \geq 3$ an integer, there is a hereditary class $\mathcal{H}$, defined by a finite family of forbidden weakly embedded substructures, such that $\mathcal{H}$ is $\mathrm{NSOP}_{n+1}$ but has $\mathrm{SOP}_{n}$: for example, let $\mathcal{H}$ be the hereditary class consisting of directed graphs with no directed $n$-cycles for $k \leq n$. This is essentially Claim 2.8 of \cite{She95}, and the same proof works for the hereditary class consisting of directed graphs with no directed $n$-cycles. By Theorem \ref{integerality} (or really, the remark after it), $\mathfrak{o}(\mathcal{H}) = n$.

    However, by the same theorem, these examples can be misleading. Let $\mathcal{H}$ be the hereditary class of directed graphs with no directed $(N, k)$-cycles (as in Notation \ref{(N, k)-cycle}). Then (when $\frac{N}{k} \geq 3$), one might naively believe that $\mathfrak{o}(\mathcal{H})$ is equal to $\frac{N}{k}$. Of course, by Theorem \ref{integerality}, $\mathfrak{o}(\mathcal{H})$ must be an integer, leaving open Question \ref{hereditary class distinctness} on whether $\mathfrak{o}(\mathcal{H})$ is an integer in general.
\end{example}

To prove Theorem \ref{integerality}, we first reduce to the case of a hereditary class $\mathcal{H}$ of directed graphs, where the failure of $\mathfrak{o}(\mathcal{H})$ to be an integer is exhibited by the graph relation itself.

\begin{lemma}\label{reduction to directed graph defined by a finite family of forbidden weakly embedded substructures}
    Suppose that there is a hereditary class $\mathcal{H}$, defined by a finite family of forbidden weakly embedded substructures, that has $\mathrm{SOP}_{r}$ for $r \geq 3$ a real number, but is $\mathrm{NSOP}_{n}$ for $n = \lceil r \rceil $ the next integer. Then there is an $\mathrm{NSOP}_{n}$ hereditary class $\mathcal{H}'$ of directed graphs with edge relation $R$, defined by a finite family of forbidden weakly embedded substructures, such that the graph $\{a_{i}\}_{i < \omega} \in \mathcal{H}'$ where $a_{i} R a_{j}$ exactly when $i < j$, but $\{a_{\theta}\}_{\theta \in S^{1}} \notin \mathcal{H}'$ where $a_{\theta} R a_{\psi}$ exactly when $\psi$ lies at most $\frac{2\pi}{r}$ radians counterclockwise from $\theta$.
\end{lemma}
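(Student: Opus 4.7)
The plan is to construct $\mathcal{H}'$ as the preimage of $\mathcal{H}$ under a vertex-blowup operation on directed graphs, and to verify the required properties.

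First, I fix an $\mathrm{SOP}_r$-witness $AB \in \mathcal{H}$ together with its associated sequence $\{A_i\}_{i < \omega} \in \mathcal{H}$. By applying the argument of Claim \ref{circle with disjoint terms} to $\mathcal{H}$, I may assume $A \cap B = \emptyset$; and since in the sequence $\{A_i\}$ each $A_i$ plays both the role of the $A$-part (in pairs $A_iA_j$ for $j > i$) and of the $B$-part (in pairs $A_jA_i$ for $j < i$), $A \cong B$ as $\mathcal{L}$-structures, and I fix such an identification. I then define a blowup operation $\Phi$ on directed graphs without loops or bidirectional edges (sufficient, since the two graphs appearing in the lemma have neither): given $G = (V, E)$, let $\Phi(G)$ be the $\mathcal{L}$-structure on $V \times A$ in which each slice $\{v\} \times A$ is a copy of $A$, each pair of slices corresponding to a directed edge $v \to w$ carries the structure of $AB$ with $\{v\} \times A$ as the $A$-part and $\{w\} \times A$ as the $B$-part, and no additional relations hold between slices of non-adjacent vertices. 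Set $\mathcal{H}' := \{G : \Phi(G) \in \mathcal{H}\}$.

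The class $\mathcal{H}'$ is then hereditary and closed under weak embeddings, because $G \hookrightarrow G'$ induces $\Phi(G) \hookrightarrow \Phi(G')$. Moreover, $\mathcal{H}'$ is defined by a finite family of forbidden weakly embedded directed graphs: each $F$ in the finite family defining $\mathcal{H}$ has finitely many vertices, and every weak embedding of $F$ into any blowup $\Phi(G)$ factors through one of finitely many distributions of $V(F)$ among $A$-copies, each of which translates to a single forbidden directed-graph pattern on $G$. The graph $\{a_i\}_{i<\omega} \in \mathcal{H}'$ because $\Phi(\{a_i\}) \cong \{A_i\}_{i<\omega} \in \mathcal{H}$, and the circle $\{a_\theta\}_{\theta \in S^1} \notin \mathcal{H}'$ because $\Phi(\{a_\theta\}) \cong \{A_\theta\}_{\theta \in S^1}$ is exactly the $\mathrm{SOP}_r$-forbidden circular configuration omitted from $\mathcal{H}$.

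The main difficulty is to verify that $\mathcal{H}'$ is $\mathrm{NSOP}_n$. Supposing otherwise, there would be a directed graph $XY$ with a sequence $\{X_i\} \in \mathcal{H}'$ satisfying $X_iX_j \equiv XY$ for $i < j$ but no $n$-cycle for $XY$ in $\mathcal{H}'$. Applying $\Phi$ yields the $\mathrm{SOP}$-witness $A'B' := \Phi(XY)$ in $\mathcal{H}$ with sequence $\{\Phi(X_i)\} \in \mathcal{H}$, so by $\mathrm{NSOP}_n$ of $\mathcal{H}$ there is an $n$-cycle $(C_0, \ldots, C_{n-1})$ for $A'B'$ in $\mathcal{H}$. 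The main obstacle is to lift this $n$-cycle back to one for $XY$ in $\mathcal{H}'$: each $C_i$ inherits two potentially distinct decompositions into $A$-copies, one from each of its two adjacent pairs in the cycle, and these must be reconciled to produce directed-graph structures $\tilde{X}_i$ with $\tilde{X}_i \cong X$ and $\tilde{X}_i\tilde{X}_{(i+1)\mod n} \equiv XY$. I would handle this obstacle via a Ramsey-style argument applied to the sequence $\{\Phi(X_i)\}$: by extracting an indiscernible subsequence (still an $\mathrm{SOP}$-witness in $\mathcal{H}$), passing to a sufficiently long finite portion, and then using a pigeonhole argument on the finitely many possible alignments within $\mathrm{Aut}(\Phi(XY))$, one can locate an $n$-cycle in $\mathcal{H}$ whose two incident decompositions at each $C_i$ agree simultaneously. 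The resulting aligned $n$-cycle exhibits an $n$-cycle for $XY$ in $\mathcal{H}'$, contradicting the assumption and completing the proof.
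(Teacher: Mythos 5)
Your proposal follows the same blowup-and-preimage strategy as the paper: define a vertex-blowup $\Phi$ sending a directed graph $G$ to an $\mathcal{L}$-structure whose slices carry the $A$/$AB$-structure, set $\mathcal{H}' = \{G : \Phi(G) \in \mathcal{H}\}$, and argue the four required properties. The finite-basis argument and the linear-order/circle observations are essentially the paper's, and the overall architecture is right. However, there are two genuine gaps.

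First, the reduction to $A \cap B = \emptyset$ is not justified. You cite ``the argument of Claim \ref{circle with disjoint terms},'' but that claim shows a circle configuration can be found with disjoint terms; it does not let you replace the fixed $\mathrm{SOP}_r$-witness $AB$ (with $C := A \cap B$) by one with $C = \emptyset$ while staying in the given $\mathcal{H}$. And the assumption matters: with $C \neq \emptyset$, your blowup $\Phi(\{a_i\})$ on disjoint slices $\{i\} \times A$ would try to map to $\{A_i\}_{i<\omega}$ non-injectively (collapsing all copies of $C$), so it is not a weak embedding and you cannot conclude $\Phi(\{a_i\}) \in \mathcal{H}$. The paper's construction avoids this by building $\tilde{G}$ on $C \sqcup \bigsqcup_i \tilde{A}_i$ with $C$ shared as a common base, so the maps into $\{A_i\}$ are injective.

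Second, and more seriously, the $\mathrm{NSOP}_n$ step is not worked out and the proposed Ramsey/pigeonhole ``alignment'' argument misidentifies the obstruction. Because each $X_i$ plays both roles in the sequence, the enumerated tuples $X$ and $Y$ already have the same quantifier-free type, so the two slice-decompositions of each $C_i$ coincide; there is no alignment to reconcile, and there is no sequence of $n$-cycles on which to run a pigeonhole. The two actual obstructions are that the $n$-cycle $(C_0,\ldots,C_{n-1})$ produced by $\mathrm{NSOP}_n(\mathcal{H})$ (i) may have the $C_i$'s overlapping in an uncontrolled way, and (ii) may carry extra $\mathcal{L}$-relations between non-adjacent $C_i$'s, so it need not itself be a blowup of a directed graph. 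The paper handles (i) via the disjoint-terms refinement of $\mathrm{NSOP}_n$ (Claim \ref{circle with disjoint terms}) to obtain $\{G^*_i\}_{i<n}$ with $G^*_i \cap G^*_j$ equal to the common base, and handles (ii) by \emph{not} lifting $(C_0,\ldots,C_{n-1})$: instead it directly constructs the ``pure'' directed-graph $n$-cycle $H$ and shows its blowup $\tilde{H}$ \emph{weakly embeds} into $\{G^*_i\}$. The weak-embedding direction (source pure, target possibly messy) is exactly what absorbs the extra relations; closure of $\mathcal{H}$ under weak embeddings then gives $\tilde{H} \in \mathcal{H}$, hence $H \in \mathcal{H}'$. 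Your sketch mentions neither the disjointness requirement nor the weak-embedding step, and as written the lifting argument does not close.
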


\begin{proof}
    Let the structure $AB$ exhibit $\mathrm{SOP}_{r}$ for the hereditary class $\mathcal{H}$ of $\mathcal{L}$-structures, so there exists a sequence $\{ A_{i}\}_{i < \omega} \in \mathcal{H}$ such that $A_{i}A_{j} \equiv AB$ for all $i < j$, but there does not exist a set $\{ A_{\theta}\}_{\theta \in S^{1}} \in \mathcal{H}$ with $A_{\theta}A_{\psi} \equiv AB$ for all $\theta, \psi \in S^{1}$ with $\psi$ lying at most $\frac{2\pi}{r}$ radians counterclockwise from $\theta$. Let $C := A_{0} \cap A_{1}$, and $A^{*}_{i} = A_{i} \backslash C$; ; let $k = |A_{i} \backslash C|$. Define the hereditary class $\mathcal{H}'$ of directed graphs with edge relation $R$ as follows: a directed graph $G = \{v_{1}, \ldots, v_{\ell}\} \in \mathcal{H}'$ if $\tilde{G} \in \mathcal{H}$, where $\tilde{G}$ is the unique $\mathcal{L}$-structure with underlying set $C \sqcup \bigsqcup^{\ell}_{i=1} \tilde{A}_{i}$ containing $C$ as an induced substructure, such that the $\tilde{A}_{i}$ are $k$-tuples with $\tilde{A}_{i} \equiv_{C} A^{*}_{0}$, $\tilde{A}_{i}\tilde{A}_{j} \equiv_{C} A^{*}_{0} A^{*}_{1}$ whenever $v_{i} R v_{j}$, and any tuple with coordinates in $\tilde{G} = C \sqcup \bigsqcup^{\ell}_{i=1} \tilde{A}_{i}$, whose coordinates are not all contained in some fixed set of the form $\tilde{A}_{i} C$, or of the form $\tilde{A}_{i} \tilde{A}_{j} C$ for $v_{i} R v_{j}$, belongs to no relation of $\mathcal{L}$.

    We first show that $\mathcal{H}'$ really is defined by a finite family of forbidden weakly embedded substructures. Let $\mathcal{H} = \mathcal{H}(\mathcal{F})$ for $\mathcal{F}$ finite, and let $N$ be the size of the largest structure in $\mathcal{F}$. We show that $\mathcal{H}' = \mathcal{H}(\mathcal{F}')$ where $\mathcal{F}'$ is the collection of all directed graphs of size at most $n$ not belonging to $\mathcal{H}$. First, $\mathcal{H}' \subseteq \mathcal{H}(\mathcal{F}')$, because $\mathcal{H}'$ is closed under weak embeddings, because, by construction, any weak embedding of graphs $\iota: G \hookrightarrow H$ gives a weak embedding of $\mathcal{L}$-structures $\tilde{\iota}: \tilde{G} \hookrightarrow \tilde{H}$, and $\mathcal{H}$ is closed under weak embeddings. We now show $\mathcal{H}' \supseteq \mathcal{H}(\mathcal{F}')$; suppose $G= \{v_{1}, \ldots, v_{\ell}\} \notin \mathcal{H}'$. Then $\tilde{G} \notin \mathcal{H}$. So $\tilde{G}= C \sqcup \bigsqcup^{\ell}_{i=1} \tilde{A}_{i}$ contains some $D \in \mathcal{F}$, which will then be contained in $C \sqcup \bigsqcup_{i \in S} \tilde{A}_{i}$ for $S \subset \{1, \ldots \ell\}$ of size at most $N$. Then for $G_{0} := \{v_{i}\}_{i \in S} \subset G$, $\tilde{G_{0}} = C \sqcup \bigsqcup_{i \in S} \tilde{A}_{i} \notin \mathcal{H}$, so $G_{0} \in \mathcal{F}'$ and $G \notin \mathcal{H}(\mathcal{F}')$.

    Next, we show that $G=\{a_{i}\}_{i < \omega} \in \mathcal{H}'$ where $a_{i} R a_{j}$ exactly when $i < j$, but $H=\{a_{\theta}\}_{\theta \in S^{1}} \notin \mathcal{H}'$ where $a_{\theta} R a_{\psi}$ exactly when $\psi$ lies at most $\frac{2\pi}{r}$ radians counterclockwise from $\theta$. But $G \in \mathcal{H}'$: $\tilde{G}$ weakly embeds in $\{ A_{i}\}_{i < \omega} \in \mathcal{H}$, so $\tilde{G} \in \mathcal{H}$ because $\mathcal{H}$ is closed under weak embeddings. And $H \notin \mathcal{H}'$, because otherwise $\tilde{H} \in \mathcal{H}$ would be a set $\{ A_{\theta}\}_{\theta \in S^{1}} \in \mathcal{H}$ with $A_{\theta}A_{\psi} \equiv AB$ for all $\theta, \psi \in S^{1}$ with $\psi$ lying at most $\frac{2\pi}{r}$ radians counterclockwise from $\theta$.

    Finally, we show $\mathcal{H}'$ is $\mathrm{NSOP}_{n}$. Let $\{G_{i}\}_{i < \omega} \in \mathcal{H}'$ be such that $G_{0} G_{1} \equiv G_{i}G_{j} $ for $i < j$. Let $G = \tilde{G}_{0} \cap \tilde{G}_{1}$. So $\widetilde{\{G_{i}\}_{i < \omega}}=\{\tilde{G}_{i}\}_{i < \omega} \in \mathcal{H}$ with $\tilde{G}_{0} \tilde{G}_{1} \equiv \tilde{G}_{i}\tilde{G}_{j} $ for $i < j$., and $\tilde{G}_{0} \cap \tilde{G}_{1} = \tilde{G}$. Since $\mathcal{H}$ is $\mathrm{NSOP}_{n}$, by the proof of Claim \ref{circle with disjoint terms}, we have $\{G^{*}_{i}\}^{n-1}_{i =0} \in \mathcal{H}$ with $G^{*}_{i}G^{*}_{i+1 \mathrm{\: mod \:}n}\equiv \tilde{G}_{0} \tilde{G}_{1} $ for $i < n$, and $G^{*}_{i} \cap G^{*}_{j} = \tilde{G}$ for $i \neq j$. Define the $H$ to be the unique directed graph of the form  $G \sqcup \bigsqcup^{n-1}_{i=0} H_{i}$ with $G$ as an induced substructure, satisfying the following conditions:

    (1) $(H_{i}G)(H_{i+1 \mathrm{\: mod \:}n}G)\equiv G_{0} G_{1} $ for $i < n$.

    (2) There are no edges in $H$ that are not between vertices of a pair of the form $H_{i}H_{i+1 \mathrm{\: mod \:}n}G$ for $i < n$.

    By (1), to show $\mathcal{H}'$ is $\mathrm{NSOP}_{n}$, because  $\{G_{i}\}_{i < \omega}$ was an arbitrary sequence in $\mathcal{H}'$ with $G_{0} G_{1} \equiv G_{i}G_{j} $ for $i < j$, it remains to show $H = \{H_{i}G\}^{n-1}_{i=0} \in \mathcal{H}'$. But $\tilde{H}$ weakly embeds in $\{G^{*}_{i}\}^{n-1}_{i =0} \in \mathcal{H}$, using the disjointness condition on the $G^{*}_{i}$, so $\tilde{H} \in \mathcal{H}$ because $\mathcal{H}$ is closed under weak embedding, and $H \in \mathcal{H}'$.
\end{proof}

\begin{remark}\label{reduction to directed graphs}
    Using the above arguments, we can extend Proposition \ref{reduction to the case of a hereditary class closed under weak embeddings}, showing that $\mathcal{H}'$ in that proposition can even be chosen to a hereditary class of \textit{directed graphs} closed under weak embeddings. Moreover, if $\mathcal{H}$ is $\mathrm{SOP}_{s}$ for some real number $s \geq 3$, then $\mathcal{H}'$ can be chosen so that its graph relation $R$ exhibits $\mathrm{SOP}_{s}$: $\{a_{i}\}_{i < \omega} \in \mathcal{H}'$ where $a_{i} R a_{j}$ exactly when $i < j$, but $\{a_{\theta}\}_{\theta \in S^{1}} \notin \mathcal{H}'$ where $a_{\theta} R a_{\psi}$ exactly when $\psi$ lies at most $\frac{2\pi}{s}$ radians counterclockwise from $\theta$. . So distinctness of the real-valued and the integer-valued hierarchies at the level of hereditary classes, or integerality of $\mathfrak{o}(\mathcal{H})$, really reduces to the case of a hereditary class of directed graphs closed under weak embeddings--and in fact, if the hierarchies are distinct, there is such a hereditary class whose graph relation exhibits $\mathrm{SOP}_{r}$ for some real number $r \geq 3$, but which is $\mathrm{NSOP}_{n}$ for $n = \lceil r \rceil$ the next integer.
\end{remark}

Before proceeding, we briefly rule out the edge case that $\mathfrak{o}(\mathcal{H})$ is infinite.

\begin{fact}
    If $\mathcal{H}$ is a hereditary class of graphs defined by a finite family of forbidden weakly embedded subgraphs, then $\mathfrak{o}(\mathcal{H})$ is finite.
\end{fact}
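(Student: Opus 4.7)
The plan is to show that if $N$ denotes the maximum number of vertices of any graph in the defining finite family $\mathcal{F}$, then $\mathcal{H}$ is $\mathrm{NSOP}_r$ for every real $r > N$, which immediately gives $\mathfrak{o}(\mathcal{H}) \leq N < \infty$. I fix any $AB \in \mathcal{H}$ and any sequence $\{A_i\}_{i<\omega} \in \mathcal{H}$ with $A_i A_j \equiv AB$ for $i < j$, and any integers $n, k$ such that $n/k > N$. Mimicking the construction in the proof of Lemma \ref{reduction to directed graph defined by a finite family of forbidden weakly embedded substructures}, I build the $(n,k)$-cycle configuration $\tilde{G}$ with blocks $\tilde{A}_0, \ldots, \tilde{A}_{n-1}$ all sharing $C = A \cap B$, where $\tilde{A}_i \tilde{A}_j \equiv_C AB$ whenever $(j-i) \bmod n \in \{1, \ldots, k\}$ and no other relations are declared across blocks. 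Since every $F \in \mathcal{F}$ has at most $N$ vertices, to show $\tilde{G} \in \mathcal{H}$ it suffices to show that every induced substructure of $\tilde{G}$ on at most $N$ vertices weakly embeds into $\{A_i\}_{i<\omega} \in \mathcal{H}$, and then invoke closure of $\mathcal{H}$ under weak embeddings.

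The combinatorial heart of the argument is that the directed graph $C_n^k$ on vertices $\{0,\ldots,n-1\}$ with an edge $i \to j$ iff $(j-i) \bmod n \in \{1,\ldots,k\}$ has directed girth at least $n/k$: any directed cycle $v_0 \to v_1 \to \cdots \to v_{m-1} \to v_0$ produces $\sum_s ((v_{s+1}-v_s) \bmod n) = \ell n$ for some integer $\ell \geq 1$ by telescoping, and each summand is at most $k$, forcing $mk \geq n$. In particular, once $n/k > N$, the induced subdigraph of $C_n^k$ on any set $S$ with $|S| \leq N$ is acyclic, and so admits a topological order $\sigma \colon S \to \{1,\ldots,|S|\}$.

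To finish, take any induced substructure $H$ of $\tilde{G}$ on at most $N$ vertices and let $S \subseteq \{0,\ldots,n-1\}$ be the (at most $N$) block positions represented in $H$. Fix a topological order $\sigma$ of the induced subdigraph of $C_n^k$ on $S$, and map each $H$-vertex in $\tilde{A}_i$ to the corresponding vertex of $A_{\sigma(i)}$, fixing $C$ pointwise. Intra-block edges are preserved because $\tilde{A}_i \cong A \cong A_{\sigma(i)}$; an inter-block edge in $\tilde{G}$ can only occur between $\tilde{A}_i, \tilde{A}_j$ with $(j-i) \bmod n \in \{1,\ldots,k\}$, in which case $\sigma(i) < \sigma(j)$ by the topological order, so the image pair $A_{\sigma(i)} A_{\sigma(j)} \equiv AB$ carries the same edges with the correct orientation. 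Hence $H$ weakly embeds into $\{A_i\}_{i<\omega}$, so $H \in \mathcal{H}$, and therefore $\tilde{G} \in \mathcal{H}$. This establishes property (*)$_{n,k}$ in the hereditary class for all $n, k$ with $n/k > N$, and by the hereditary-class analogues of Lemmas \ref{(N, k)-cycle comparison} and \ref{finitary and infinitary conditions} (cf. Definition \ref{real-valued hierarchy, hereditary class version}) we conclude $\mathcal{H}$ is $\mathrm{NSOP}_r$ for every real $r > N$. The one subtlety to handle carefully is the uniformity of the girth bound: choosing $n/k > N$ must kill the short-cycle obstruction for every $\leq N$-vertex subset $S$ simultaneously, regardless of how the positions in $S$ are distributed around the cycle.
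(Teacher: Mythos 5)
Your proof is correct and is a genuine refinement of the paper's. The paper's argument for this fact is the $k=1$ special case of yours: it builds the plain directed $n$-cycle configuration for $n > N$ (with $N$ the size of the largest forbidden structure), and observes that any weakly embedded copy of some $F \in \mathcal{F}$ lives on at most $N < n$ of the $n$ blocks, so it misses a block entirely; the ``opened'' chain then weakly embeds into $\{G_i\}_{i<\omega}$. You instead build the finer $(n,k)$-cycle configuration for $n/k > N$, and prove the directed girth bound $\operatorname{girth}(C^k_n) \geq n/k$ via the telescoping argument, so that any $\leq N$-vertex subset of positions induces an acyclic subdigraph of $C^k_n$ and hence admits a topological order into which $\{A_i\}$ can be plugged. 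What this buys you over the paper's argument is the sharper quantitative conclusion $\mathfrak{o}(\mathcal{H}) \leq N$ directly, rather than just $\mathfrak{o}(\mathcal{H}) \leq $ (some integer $> N$); what it costs is the extra bookkeeping of $(n,k)$-cycles and the girth lemma, neither of which is needed merely to establish finiteness. Both arguments hinge on the same structural fact — that a $\leq N$-vertex piece of the cyclic configuration ``unfolds'' into the linear order — and both correctly reduce membership in $\mathcal{H}=\mathcal{H}(\mathcal{F})$ to checking $\leq N$-vertex induced substructures. One small imprecision in your writeup: $\tilde{A}_i$ is disjoint from $C$, so it is $\tilde{A}_i C$ (not $\tilde{A}_i$) that is isomorphic to $A$; this does not affect the argument since your embedding fixes $C$ pointwise.
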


\begin{proof}
    We show that $\mathcal{H}=\mathcal{H}(\mathcal{F})$ is $\mathrm{NSOP}_{n}$ for $n$ larger than the size of the largest structure in $\mathcal{F}$. Let $\{G_{i}\}_{i < \omega} \in \mathcal{H}$ be such that $G_{0}G_{1} \equiv G_{i}G_{j}$ for $i < j$, and let $H= G \sqcup \bigsqcup^{n-1}_{i=0} H_{i}$  for $G = G_{0} \cap G_{1}$ satisfy conditions (1) and (2) as in the proof of Lemma \ref{reduction to the case of a hereditary class closed under weak embeddings}. It remains only to show $H \in \mathcal{H}$. But otherwise, $H$ must contain some weakly embedded image of some structure in $\mathcal{F}$, which by choice of $n$ must belong to $ G \sqcup \bigsqcup_{i \in S} H_{i}$ for $S \subsetneq \{0, \ldots n-1\}$. But $ G \sqcup \bigsqcup_{i \in S} H_{i}$, and thus this element of $\mathcal{F}$, can be weakly embedded in $\{G_{i}\}_{i < \omega}$, so $\{G_{i}\}_{i < \omega} \notin \mathcal{H}$, a contradiction.
\end{proof}

Next, we make explicit an already-established construction in Shelah's $\mathrm{NSOP}_{n}$ hierarchy, the \textit{helix maps}. Shelah used this construction for reasoning about specific examples in his original introduction of the hierarchy in \cite{She95} (in Claim 2.8 of that article), but Malliaris first developed it at the abstract level in the proof of Theorem 7.7 of \cite{Mal10b}, where it proved crucial for understanding edge distribution in $\mathrm{NSOP}_{3}$ theories.

We will only need to discuss helix maps in the setting of hereditary classes of directed graphs closed under weak embeddings (the importance of which is suggested by Remark \ref{reduction to directed graphs}), but the following background can be straightforwardly generalized to any hereditary class of $\mathcal{L}$-structures closed under weak embeddings.

We start with the following definition, which gives us the underlying data for constructing helix maps:

\begin{definition}\label{cyclic n-decomposition}
    Let $G$ be a directed graph, and $n \geq 3$ be an integer. A \textit{cyclic $n$-decomposition} of $G$ is a partition $G = \bigsqcup^{n-1}_{i =0} G_{i} \sqcup D$ where every edge of $G$ has both endpoints in some fixed set of the form $G_{i}G_{i+1 \mathrm{\: mod \: n}}D$. A cyclic $n$-decomposition of $G$ is \textit{full} if $D = \emptyset$, and it is \textit{directed} if any edge between $v \in G_{i}$ and $v' \in G_{i + 1 \mathrm{\: mod\:  } n}$ for $i < n$ is in the direction from $v$ to $v'$.
\end{definition}

We will consider full and directed cyclic $n$-decompositions in the following section.

For clarity, in defining the helix map associated to any cyclic $n$-decomposition of a graph, we will make the enumeration of each set in the partition explicit, though later on we may refer to these sets using standard model-theoretic notation as described in Subsection \ref{hereditary class subsection}. 

\begin{definition}\label{helix map}
    Let $G = \bigsqcup^{n-1}_{i =0} G_{i} \sqcup D$ be a cyclic $n$-decomposition of a graph $G$. Enumerate each $G_{i}$ as $\{g_{i}(0), \ldots, g_{i}(n_{i})\}$ and $D$ as $\{d(0), \ldots, d(n_{d}))\}$. Let $H$ be the graph with distinct vertices $g^{j}_{i}(k)$, for $j < \omega$, $i < n$ and $k \leq n_{i}$, along with the vertices $d(0) \ldots, d(n_{d})$ of $D$ forming a subgraph of $H$, with the unique edge relation satisfying the following conditions:

    (1) For $j < j'$, and $i' = i + 1 \mathrm{\: mod \:} n$, the map from $\{g^{j}_{i}(0)\ldots g^{j}_{i}(n_{i})\} \sqcup \{g^{j'}_{i'}(0)\ldots g^{j'}_{i'}(n_{i'})\} \sqcup \{d(0), \ldots, d(n_{d}))\}$ to $G_{i} \sqcup G_{i'} \sqcup D$ given by $g^{j}_{i}(k) \mapsto g_{i}(k)$, $g^{j'}_{i'}(k) \mapsto g_{i'}(k)$, $d(k) \mapsto d(k)$ is an isomorphism.

    (2) There are no edges in $H$ whose endpoints are not both in some fixed set of the form $\{g^{j}_{i}(0)\ldots g^{j}_{i}(n_{i})\} \sqcup \{g^{j'}_{i'}(0)\ldots g^{j'}_{i'}(n_{i'})\} \sqcup \{d(0), \ldots, d(n_{d}))\}$, for $j < j'$, and $i' = i + 1 \mathrm{\: mod \:} n$.

    Then the \emph{$n$-helix map} associated with the cyclic $n$-decomposition $G = \bigsqcup^{n-1}_{i =0} G_{i} \sqcup D$ is the map $h: H \twoheadrightarrow G$ given by $g^{j}_{i}(k) \mapsto g_{i}(k)$, $d(k) \mapsto d(k)$. For $\ell > 1$, the \emph{$n$-helix map of length $\ell$} associated with this cyclic $n$-decomposition is the map $h^{\ell}: H^{\ell} \twoheadrightarrow G$ that is defined similarly, where $H^{\ell}$ is defined similarly to $H$ but for $j < \ell$ instead of $j < \omega$.
\end{definition}

The main idea of the proof of the following is found in the proof of Theorem 7.7 of \cite{Mal10b}. 

\begin{fact}\label{closure under helix maps} \emph{(Malliaris, \cite{Mal10b})} 

Let $\mathcal{H}$ be a hereditary class of directed graphs that is closed under weak embeddings. Then for $n \geq 3$ an integer, $\mathcal{H}$ is $\mathrm{NSOP}_{n}$ if it is closed under $n$-helix maps: if $h: H \twoheadrightarrow G$ is an $n$-helix map, and $H \in \mathcal{H}$, then $G \in \mathcal{H}$.
\end{fact}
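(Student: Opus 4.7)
The plan is to show that, granted closure under weak embeddings and closure under $n$-helix maps, any sequence $\{G_i\}_{i<\omega} \in \mathcal{H}$ with $G_i G_j \equiv G_0 G_1$ for $i<j$ can be converted into a cyclic $n$-configuration in $\mathcal{H}$, thereby verifying the hereditary-class form of $\mathrm{NSOP}_{n}$ from Definition \ref{real-valued hierarchy, hereditary class version}. First, by passing to an indiscernible subsequence and applying compactness exactly as in the proof of Claim \ref{circle with disjoint terms}, I may assume $G_i \cap G_j = D$ for every $i \neq j$, where $D := G_0 \cap G_1$, and that $G_i G_j \equiv_D G_0 G_1$ for all $i < j$. (Subsequences stay in $\mathcal{H}$ by heredity, so nothing is lost.)

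Next I build the candidate cyclic configuration $G^{c}$: take $n$ pairwise disjoint copies $G_0^{*}, \ldots, G_{n-1}^{*}$ of $G_0 \setminus D$ glued along a common $D$, put on each block $G_i^{*} \cup G_{i+1 \bmod n}^{*} \cup D$ the edge structure of $G_0 G_1$ transported along the identity on $D$, and declare no other edges. The partition $G^{c} = D \sqcup \bigsqcup_{i<n} G_i^{*}$ is then a cyclic $n$-decomposition in the sense of Definition \ref{cyclic n-decomposition}, and so yields the canonical $n$-helix cover $h : H^{c} \twoheadrightarrow G^{c}$ of Definition \ref{helix map}.

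The key step is to produce a weak embedding $\phi : H^{c} \hookrightarrow \bigcup_{\ell<\omega} G_\ell$. Enumerating each $G_i^{*}$ as $\{g_i^{*}(0), \ldots, g_i^{*}(n_i)\}$, I send $g_i^{*}(k)^{j} \in H^{c}$ to the vertex of $G_{jn+i}$ corresponding to $g_i^{*}(k)$ under the fixed $\equiv_{D}$-isomorphism $G_0 \to G_{jn+i}$, and set $\phi|_{D} = \mathrm{id}_{D}$. Every edge of $H^{c}$ lives in some block $\{g_i^{*}(\cdot)^{j}\} \cup \{g_{i'}^{*}(\cdot)^{j'}\} \cup D$ with $j < j'$ and $i' = i+1 \bmod n$; a short arithmetic check (splitting on whether $i < n-1$ or $i = n-1$) shows $jn + i < j'n + i'$ in every such case, so the image lies inside a pair $G_{jn+i} G_{j'n+i'}$ with $jn+i < j'n+i'$. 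Because $G_\ell G_{\ell'} \equiv_D G_0 G_1$ whenever $\ell < \ell'$, the edge pattern that the helix construction imposes on this block coincides with the one inherited from $\bigcup_\ell G_\ell$; hence $\phi$ is a weak embedding.

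Closing the argument: since $\{G_\ell\}_{\ell<\omega} \in \mathcal{H}$ and $\mathcal{H}$ is closed under weak embeddings, $H^{c} \in \mathcal{H}$; since $\mathcal{H}$ is closed under $n$-helix maps and $h : H^{c} \twoheadrightarrow G^{c}$ is one, $G^{c} \in \mathcal{H}$. Setting $G_i^{c} := G_i^{*} \cup D$, the configuration $\{G_i^{c}\}_{i<n}$ satisfies $G_i^{c} G_{i+1 \bmod n}^{c} \equiv G_0 G_1$, which is exactly what Definition \ref{real-valued hierarchy, hereditary class version} demands for $r = n$. The main obstacle is the bookkeeping around enumerations in the weak embedding: one must ensure that the isomorphism type $G_0 G_1$ is genuinely preserved by $\phi$ on every helix block, which is where the arithmetic inequality $jn + i < j'n + i'$ and the uniform $\equiv_{D}$ agreement from step one do all the real work.
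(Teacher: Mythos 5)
Your proposal is correct and follows essentially the same route as the paper's proof: build the cyclic $n$-configuration $G^c$ from $G_0 G_1$, take its $n$-helix cover $H^c$, weakly embed $H^c$ into $\bigcup_\ell G_\ell$ by sending the block $G^j_i D$ into $G_{jn+i}$, and then apply closure under weak embeddings followed by closure under $n$-helix maps. The only difference is cosmetic: you make the disjointness normalization $G_i \cap G_j = D$ explicit (it is in fact automatic from $G_i G_j \equiv G_0 G_1$ preserving coordinate equalities), whereas the paper treats it as implicit.
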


Of course, the actual map $h$ in Definition \ref{helix map} is not important here, but it will be important later in our proof of Theorem \ref{integerality}.

\begin{proof}
    We first show that, if $\mathcal{H}$ is closed under $n$-helix maps, then $\mathcal{H}$ is $\mathrm{NSOP}_{n}$. Suppose $\mathcal{H}$ is closed under $n$-helix maps and let $AB \in \mathcal{H}$ have $\{A_{i}\}_{i < \omega}$ such that $A_{i} A_{j} \equiv AB$ for $i < j$, and define $D : = A_{i} \cap A_{j}$. Let us form $G = \bigsqcup^{n-1}_{i =0} G_{i} \sqcup D$ as in the end of the proof of Lemma \ref{reduction to directed graph defined by a finite family of forbidden weakly embedded substructures}: $(G_{i}D)(G_{i+1 \mathrm{\: mod \:}n}D)\equiv AB $ for $i < n$, while there are no edges whose endpoints do not both belong to some pair of the form $(G_{i}D)(G_{i+1 \mathrm{\: mod \:}n}D)$; then it suffices to show $G \in \mathcal{H}$. The partition $G = \bigsqcup^{n-1}_{i =0} G_{i} \sqcup D$ is a cyclic $n$-decomposition; let $h: H \twoheadrightarrow G$ be the $n$-helix map associated with this cyclic $n$-decomposition. For $i < n$, $j < \omega$, let $G^{j}_{i}$ denote $\{g^{j}_{i}(0)\ldots g^{j}_{i}(n_{i})\}$ as in Definition \ref{helix map} ($D$ is included in $H$ as $\{d(0), \ldots, d(n_{d}))\}$). Then $H$ weakly embeds in $\{A_{i}\}_{i < \omega} \in \mathcal{H}$ by the map that restricts, on $G^{j}_{i}D$, to the isomorphism (restricting to the identity on $D$) by which $G^{j}_{i}D \equiv A^{jn+ i}$. So $H \in \mathcal{H}$, then $G \in \mathcal{H}$ because $\mathcal{H}$ is closed under helix maps.

    It remains to show that, if $\mathcal{H}$ is $\mathrm{NSOP}_{n}$, then it is closed under $n$-helix maps. Let $G = \bigsqcup^{n-1}_{i =0} G_{i} \sqcup D$ be a cyclic $n$-decomposition of a graph $G$ and let $h: H \twoheadrightarrow G$ be the associated helix map. Suppose $\mathcal{H}$ is $\mathrm{NSOP}_{n}$, and $H \in \mathcal{H}$; we show $G \in \mathcal{H}$. Write $H$ as $\{G^{j}_{0}\ldots G^{j}_{n-1}D\}_{j<\omega}$. Then, by $\mathrm{NSOP}_{n}$ and the proof of Claim \ref{circle with disjoint terms}, we may find a graph $K =\bigsqcup_{i, j < n}K^{j}_{i} \sqcup D \in \mathcal{H}$ such that, for $j < n$, 
    
    $$K^{j}_{0}\ldots K^{j}_{n-1}K^{j+1 \mathrm{\:mod\:}n }_{0}\ldots K^{j+1 \mathrm{\:mod\:}n }_{n-1} \equiv_{D} G^{0}_{0}\ldots G^{0}_{n-1} G^{1}_{0}\ldots G^{1}_{n-1} $$.

    In particular, for $i< n$,

    $$K^{i}_{i}K^{i + 1 \mathrm{\:mod\:}n}_{i + 1 \mathrm{\:mod\:}n} \equiv_{D} G^{0}_{i}G^{1}_{i+1}\equiv_{D} G_{i}G_{i + 1 \mathrm{\:mod\:}n}$$

    where the second equivalence is given by the construction of the $n$-helix map. So because $G = \bigsqcup^{n-1}_{i =0} G_{i} \sqcup D$ is a cyclic $n$-decomposition, $G$ weakly embeds into $K$ by the map that is the identity on $D$ and, on $G_{i}$, restricts to the isomorphism exhibiting $G_{i} \equiv_{D} K^{i}_{i} $. (This is the ``diagonal argument" used in the proof of Theorem 7.7 of \cite{Mal10b}; see in particular figure 2 of that paper.) Therefore, because $K \in \mathcal{H}$, $G \in \mathcal{H}$.
\end{proof}

Now that we have defined helix maps, we give an abstract property of these maps in the category of graphs, relative to the cycles of minimal length in a graph. We recall the definition of a homomorphism of directed graphs:

\begin{definition}\label{graphhomomorphism}(See e.g. \cite{HN04})
 A map $f: G \to H$ between directed graphs is a \emph{graph homomorphism} if, for $v_{1}, v_{2} \in G$, $v_{1} R v_{2}$ implies $f(v_{1})Rf(v_{2})$ (in particular, implies $f(v_{1}) \neq f(v_{2})$).

\end{definition}

Then the following is immediate from the definition of a helix map:

\begin{prop}\label{helix maps are graph homomorphisms}

Any $n$-helix map (of length $\ell$) is a graph homomorphism.
    
\end{prop}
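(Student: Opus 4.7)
The plan is to deduce the statement essentially directly from conditions (1) and (2) in Definition \ref{helix map}, together with the fact that $G = \bigsqcup^{n-1}_{i=0} G_i \sqcup D$ is a cyclic $n$-decomposition of $G$ (so every edge of $G$ already has both endpoints in some $G_i \sqcup G_{i+1 \mathrm{\:mod\:} n} \sqcup D$). Fix an $n$-helix map $h: H \twoheadrightarrow G$ (the length-$\ell$ case is identical, just with $j$ ranging over a finite initial segment of $\omega$). I must show that if $v_1 R v_2$ is an edge of $H$, then $h(v_1) R h(v_2)$ in $G$ and $h(v_1) \neq h(v_2)$.

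First I would apply condition (2) of Definition \ref{helix map} to locate a ``slab''
\[
S_{j, j', i} := \{g^{j}_{i}(0), \ldots, g^{j}_{i}(n_i)\} \sqcup \{g^{j'}_{i'}(0), \ldots, g^{j'}_{i'}(n_{i'})\} \sqcup \{d(0), \ldots, d(n_d)\},
\]
with $j < j'$ and $i' = i + 1 \mathrm{\:mod\:} n$, containing both endpoints $v_1, v_2$ of the edge. Such a slab exists precisely because condition (2) forbids edges whose endpoints do not both lie in some fixed slab of this form. Then I would invoke condition (1): the map $S_{j, j', i} \to G_i \sqcup G_{i'} \sqcup D$ sending $g^{j}_{i}(k) \mapsto g_{i}(k)$, $g^{j'}_{i'}(k) \mapsto g_{i'}(k)$, and $d(k) \mapsto d(k)$ is an isomorphism of (directed) graphs. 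But this map is, by inspection, exactly the restriction $h|_{S_{j, j', i}}$ of the helix map to that slab.

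Since $h|_{S_{j, j', i}}$ is an isomorphism onto $G_i \sqcup G_{i'} \sqcup D$, it preserves the edge relation: $v_1 R v_2$ in $H$ implies $h(v_1) R h(v_2)$ in $G$. Moreover, $v_1 \neq v_2$ (an edge requires distinct endpoints), so injectivity of the isomorphism forces $h(v_1) \neq h(v_2)$, which is the distinctness clause built into Definition \ref{graphhomomorphism}. This completes the verification that $h$ is a graph homomorphism. There is no real obstacle here; the proof is purely a matter of unwinding definitions, and the only thing to be careful about is ensuring that condition (2) really does guarantee the existence of a single slab containing both endpoints of any edge, rather than merely guaranteeing this separately for each endpoint, which is indeed what the definition asserts.
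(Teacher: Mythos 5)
Your proof is correct and is essentially the intended argument: the paper declares the proposition ``immediate from the definition of a helix map,'' and what you have written out is precisely the unwinding — condition (2) places both endpoints of any edge of $H$ into a single slab, condition (1) says $h$ restricted to that slab is an isomorphism onto the induced subgraph $G_i \sqcup G_{i'} \sqcup D$ of $G$, and hence the edge is preserved (with the distinctness of images following automatically since $R$ is irreflexive). Nothing more is needed, and your remark about verifying that condition (2) yields a \emph{single} slab containing both endpoints, rather than a slab for each endpoint separately, is exactly the one point worth being careful about when reading the definition.
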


Depending on context, we may view directed $n$-cycles in a graph $G$ not just as lists of vertices in $G$, but as graph homomorphisms $ \gamma \to G$ where $\gamma = \{\gamma_{0}, \ldots, \gamma_{n-1}\}$ and $\gamma_{i} R \gamma_{j}$ if and only if $j = i+1 \mathrm{\: mod \:} n$ (and where, abusing notation, we may use $\gamma$ to refer to this particular graph, or to the map itself). In the rest of the section, we use ``cycle" and ``directed cycle" interchangeably.

\begin{prop}\label{cycle removal}
    
Let $\mathcal{H}$ be an $\mathrm{NSOP}_{n}$ hereditary class of directed graphs for $n \geq 3$ an integer, and let $g: H \to G$ be a graph homomorphism between finite graphs with $H, G \notin \mathcal{H}$. Let $\gamma \hookrightarrow G$ be a directed cycle in $G$ of minimal length, and suppose $k :=|\gamma| \geq n$. Then there is a $k$-helix map $h: \tilde{H} \twoheadrightarrow H$ of some length $\ell$ such that $\tilde{H} \notin \mathcal{H}$ and such that there is no $k$-cycle $\gamma' \hookrightarrow \tilde{H}$ with $\gamma = g\circ h\circ \gamma'$.

$$\begin{tikzcd}
                        &  & H \arrow[dd, "g"] &  & \tilde{H} \arrow[ll, two heads, "h", dotted] \arrow[lldd] \\
                        &  &              &  &                                      \\
\gamma \arrow[rr, hook] &  & G            &  &                                     
\end{tikzcd}$$

\end{prop}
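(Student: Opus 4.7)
The plan is to pull back the minimal cycle $\gamma$ from $G$ to obtain a cyclic $k$-decomposition of $H$, take the associated $k$-helix cover, and exploit the level-index rigidity of the helix to rule out a lift of $\gamma$. Since $\mathcal{H}$ is $\mathrm{NSOP}_n$ and $k \geq n$, the ascending-chain structure of the hierarchy (Remark \ref{ascending chain}) gives $\mathcal{H} \in \mathrm{NSOP}_k$, and then Fact \ref{closure under helix maps} tells us $\mathcal{H}$ is closed under $k$-helix maps.

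First I would construct the pullback decomposition. Writing $\gamma$ as $\gamma_0 R \gamma_1 R \cdots R \gamma_{k-1} R \gamma_0$ in $G$, I set $H_i := g^{-1}(\gamma_i)$ and $D := g^{-1}(V(G) \setminus V(\gamma))$, so that $H = \bigsqcup_{i < k} H_i \sqcup D$. To verify this is a cyclic $k$-decomposition (Definition \ref{cyclic n-decomposition}), it suffices to show that no edge of $H$ goes between non-consecutive $H_i$'s. An edge $u R v$ in $H$ with $g(u) = \gamma_i$ and $g(v) = \gamma_j$ gives $\gamma_i R \gamma_j$ in $G$; if $j - i \equiv m \pmod{k}$ with $m \in \{2, \ldots, k-1\}$, then chaining this arrow with the arc $\gamma_j \to \gamma_{j+1} \to \cdots \to \gamma_i$ along $\gamma$ produces a directed cycle in $G$ of length $1 + (k - m) < k$, contradicting minimality (the case $m = 0$ is excluded since graph homomorphisms in Definition \ref{graphhomomorphism} forbid self-loops). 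Hence $m = 1$ and $j \equiv i + 1 \pmod{k}$. The same reasoning also rules out backward arrows $\gamma_{i+1 \: \mathrm{mod}\: k} R \gamma_i$ in $G$ (which, combined with $\gamma_i R \gamma_{i + 1 \: \mathrm{mod}\: k}$, would give a $2$-cycle, shorter than $\gamma$ when $k \geq 3$), so the decomposition is directed.

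Let $h \colon \tilde{H} \twoheadrightarrow H$ be the $k$-helix map associated to this decomposition, of length $\ell$ to be chosen. Since $H \notin \mathcal{H}$ and $\mathcal{H}$ is closed under (infinite-length) $k$-helix maps, the infinite cover of $H$ under this decomposition is not in $\mathcal{H}$, so some finite substructure witnessing this is contained in the length-$\ell$ cover for $\ell$ sufficiently large; fix such an $\ell$, so that $\tilde{H} \notin \mathcal{H}$.

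Finally, suppose for contradiction that a $k$-cycle $\gamma' \hookrightarrow \tilde{H}$ satisfies $g \circ h \circ \gamma' = \gamma$, with vertices $v_0, \ldots, v_{k-1}$ in $\tilde{H}$ so that $v_i R v_{i+1 \: \mathrm{mod}\: k}$ and $g(h(v_i)) = \gamma_i$. Because $h$ is the identity on $D$ and $D$ is disjoint from every $H_i$, each $v_i$ must lie in the column $\bigcup_{j < \ell} G^j_i$ (using the notation of Definition \ref{helix map}), so $v_i = g^{j_i}_{i}(s_i)$ for some $j_i \in \{0, \ldots, \ell - 1\}$ and $s_i$. For $k \geq 3$, the only pair of the form $(i', i' + 1 \: \mathrm{mod}\: k)$ in Definition \ref{helix map} matching the pair of columns $(i, i + 1 \: \mathrm{mod}\: k)$ is $i' = i$, so, combined with the directedness of the decomposition, the edge $v_i R v_{i+1 \: \mathrm{mod}\: k}$ in $\tilde{H}$ forces $j_i < j_{i+1 \: \mathrm{mod}\: k}$. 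Traversing the $k$-cycle then yields $j_0 < j_1 < \cdots < j_{k-1} < j_0$, a contradiction. The main obstacle is setting up the pullback decomposition correctly: once minimality of $\gamma$ eliminates both chords and backward arrows, the rigidity of the helix structure takes care of the rest.
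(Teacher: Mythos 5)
Your proof is correct and follows essentially the same route as the paper's: you pull back the fibers of $\gamma$ to obtain a cyclic $k$-decomposition of $H$, apply closure under $k$-helix maps (via $\mathrm{NSOP}_n \Rightarrow \mathrm{NSOP}_k$ for $k \geq n$) to find a finite-length cover $\tilde{H} \notin \mathcal{H}$, and then use the strict increase of the level index $j_i$ around the cycle to derive a contradiction. The only cosmetic difference is that the paper isolates the content of your chord/backward-arrow argument as a separate claim (that minimal-length cycles are isomorphisms onto their image — which also takes care of the injectivity you tacitly assume when writing $H = \bigsqcup_i H_i \sqcup D$), but your minimality argument is the same one the paper uses inside that claim.
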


\begin{proof}
    We first prove the following claim about cycles of minimal length.

    \begin{claim}\label{minimal length cycles}
        Let $\gamma \hookrightarrow G$ be a directed cycle in $G$ of minimal length. Then $\gamma$ is an isomorphism onto its image (so in particular, is one-to-one, justifying the notation $\gamma \hookrightarrow G$.)
    \end{claim}
\begin{proof} (of claim)

First of all, $\gamma: \{\gamma_{0}, \ldots, \gamma_{k-1}\} \to G$ must be one-to-one, because if $\gamma(\gamma_{i}) = \gamma(\gamma_{j})$ for $i < j$, then $\gamma(\gamma_{i}), \ldots, \gamma(\gamma_{j-1})$ must be a cycle of smaller length, a contradiction. Suppose $\gamma$ is not an isomorphism onto its image. Then for some $i < j$ where $j \neq i+1 \mathrm{\: mod \:} n$ and $i \neq j+1 \mathrm{\: mod \:} n$, there is an edge (in either direction) between $\gamma(\gamma_{i})$ and  $\gamma(\gamma_{j})$. Depending on the direction, either $\gamma(\gamma_{i}), \ldots \gamma(\gamma_{j})$ will then be a cycle of smaller length, or $\gamma(\gamma_{j}) \ldots \gamma(\gamma_{k-1}) \ldots \gamma(\gamma_{1}) \ldots \gamma(\gamma_{i})$ will be a cycle of smaller length, again a contradiction.
    
\end{proof}

By the claim and the assumption that $g$ is a graph homomorphism, $H = \bigsqcup^{k-1}_{i =0} G_{i} \sqcup D$ is a cyclic $k$-decomposition of $H$, where $G_{i} := g^{-1}(\{\gamma(\gamma_{i})\})$ and $D := H \backslash \bigsqcup^{k-1}_{i =0} G_{i}  $. Since $k \geq n$, $\mathcal{H}$ is $\mathrm{NSOP}_{k}$. By Fact \ref{closure under helix maps}, because $H \notin \mathcal{H}$, for some $\ell$, $\tilde{H} \notin \mathcal{H}$ where $h : \tilde{H} \twoheadrightarrow H$ is the $k$-helix map of length $\ell$ associated with the cyclic $k$-decomposition $H = \bigsqcup^{k-1}_{i =0} G_{i} \sqcup D$. To show that $h : \tilde{H} \twoheadrightarrow H$ is as desired, it remains to show that there is no $k$-cycle $\gamma' \hookrightarrow \tilde{H}$ with $\gamma = g\circ h\circ \gamma'$. But the condition $\gamma = g\circ h\circ \gamma'$ implies that $g(h(\gamma'(\gamma_{i})))= \gamma(\gamma_{i})$, so $h(\gamma'(\gamma_{i})) \in G_{i}$, so $\gamma'(\gamma_{i}) \in G^{j_{i}}_{i}$ (using the notation of the proof of Fact \ref{closure under helix maps}) for some $j_{i} < \ell$. For $i < k-1$, since $\gamma'(\gamma_{i}) R \gamma'(\gamma_{i+1})$, $j_{i+1} > j_{i}$, because the only edges between $G_{i}$ and $G_{i+1}$ go from $G_{i}$ to $G_{i+1}$ due to $g, \gamma$ being graph homomorphisms, and by construction of the $k$-helix map associated with $H = \bigsqcup^{k-1}_{i =0} G_{i} \sqcup D$. So $j_{0} < \ldots < j_{k-1}$, and $\gamma'(\gamma_{0}) \in G^{j_{1}}_{0}$ while $\gamma'(\gamma_{k-1}) \in G^{j_{k-1}}_{k-1}$. But by the same reasoning, because $\gamma'(\gamma_{k-1}) R \gamma'(\gamma_{0})$, $j_{0} > j_{k-1}$ (noting $0 = (k-1)+1 \mathrm{\: mod \:} k$), a contradiction. 

\end{proof}

Note that we only need $H \notin \mathcal{H}$, not $G \notin \mathcal{H}$, in the above, but $G \notin \mathcal{H}$ will be true in the instances where we use this proposition. Informally, we have shown that, for a graph morphism $g: H \to G$ with $G, H \notin \mathcal{H}$, where $\mathcal{H}$ is an $\mathrm{NSOP}_{n}$ hereditary class of directed graphs, we can pull $g$ back to a graph morphism that \textit{removes} any cycles lying above a minimal-length cycle in $G$. The next, short, lemma says that if we do this, we will not \textit{create} a cycle lying above a minimal-length cycle in $G$, if there were none before.

\begin{lemma}\label{pulling back does not create cycles}

Let $g: H \to G$ be a graph homomorphism, and let $\gamma \hookrightarrow G$ be an $n$-cycle in $G$. Suppose there is no $n$-cycle $\gamma' \hookrightarrow H$ such that $g \circ \gamma' = \gamma$. Let $h: H' \to H$ be another graph homomorphism. Then there is no $n$-cycle $\gamma'' \hookrightarrow H'$ such that $(g \circ h) \circ \gamma'' = \gamma$.

\end{lemma}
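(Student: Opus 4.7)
The plan is an entirely formal composition argument: the lemma is really a functoriality statement, whose purpose is to let one iterate Proposition \ref{cycle removal} cleanly. I will proceed by contradiction. Suppose that there exists an $n$-cycle $\gamma'' \hookrightarrow H'$ with $(g \circ h) \circ \gamma'' = \gamma$, and set $\gamma' := h \circ \gamma''$, viewed as a map from the abstract cycle graph $\{\gamma_0, \ldots, \gamma_{n-1}\}$ to $H$.

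The first step is to check that $\gamma'$ qualifies as an $n$-cycle $\hookrightarrow H$ in the paper's sense. Since $\gamma''$ and $h$ are both graph homomorphisms, so is the composite $\gamma'$ (composition of graph homomorphisms is a graph homomorphism); hence $\gamma'$ is a graph homomorphism from the cycle graph to $H$. For the embedding aspect of the $\hookrightarrow$ notation, the argument is automatic: the relation $g \circ \gamma' = \gamma$ expresses $\gamma$ as a factorization through $\gamma'$, and since $\gamma \hookrightarrow G$ is injective, $\gamma'$ must be injective as well (if $\gamma'(\gamma_i) = \gamma'(\gamma_j)$ then $\gamma(\gamma_i) = \gamma(\gamma_j)$, forcing $i = j$).

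The second step is immediate: by associativity of composition,
\[
g \circ \gamma' \;=\; g \circ h \circ \gamma'' \;=\; \gamma,
\]
so $\gamma'$ is an $n$-cycle $\hookrightarrow H$ satisfying $g \circ \gamma' = \gamma$, contradicting the hypothesis on $g$. There is no substantive obstacle to the proof; the content of the lemma lies not in its argument but in its role in the iterated-helix construction of Section 3, where it guarantees that once a minimal-length cycle above $\gamma$ has been removed by one application of Proposition \ref{cycle removal}, subsequent $n$-helix pullbacks do not reintroduce such a cycle.
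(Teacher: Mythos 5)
Your proof is correct and follows the same route as the paper's: contradiction, set $\gamma' := h \circ \gamma''$, and use associativity to derive $g \circ \gamma' = \gamma$. You additionally spell out why $\gamma'$ is an injective graph homomorphism (composition of homomorphisms plus the factorization through the injective $\gamma$), a detail the paper's one-line argument leaves implicit; this is a welcome clarification but not a different method.
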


\begin{proof}
    Suppose such a cycle $\gamma'' \hookrightarrow H'$ did exist. Define $\gamma' := h \circ \gamma''$. Then $\gamma' \hookrightarrow H$ would satisfy $g \circ \gamma'= g \circ ( h \circ \gamma'')  = (g \circ h) \circ \gamma'' = \gamma$, a contradiction.
\end{proof}

We are now ready to prove that $\mathfrak{o}(\mathcal{H})$ is an integer when $\mathcal{H}$ is defined by a finite family of forbidden weakly embedded substructures, Theorem \ref{integerality}:

\begin{proof}
    Suppose $\mathcal{H}$ is defined by a finite family of forbidden weakly embedded substructures, and $\mathfrak{o}(\mathcal{H})$ is not an integer. By Lemma \ref{reduction to directed graph defined by a finite family of forbidden weakly embedded substructures} and the proof of Lemma \ref{finitary and infinitary conditions}, we may assume that, for $n \geq 3$ some integer, $\mathcal{H}$ is an $\mathrm{NSOP}_{n}$ hereditary class of \textit{directed graphs} with the graph relation $R$, defined by a finite family of forbidden weakly embedded substructures, such that $H:=\{a_{i}\}_{i < \omega} \in \mathcal{H}$ where $a_{i} R a_{j}$ exactly when $i < j$, but, for some integers $N, k > 0$ with $n-1 <  \frac{N}{k} < n$, $G := \{b_{i}\}^{N -1}_{i =1} \notin \mathcal{H}$ where, for all $i, j$ such that $0 \leq i <N$ and $1 \leq j \leq k$,  $b_{i}R b_{i+j \mathrm{\: mod \:} N}$ (i.e., $G$ is an $(N, k)$-cycle for the relation $R$, in the sense of Notation \ref{(N, k)-cycle}.) We make the following fundamental observation about this graph $G$:

    \begin{claim}\label{(N, k)-cycle has no m-cycles}
       The graph $G$ has no directed $m$-cycles for $m < n$.
    \end{claim}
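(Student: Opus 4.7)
The plan is a direct counting argument on cyclic sums of index differences modulo $N$. The key observation is that, by the construction of $G$ in Lemma \ref{reduction to directed graph defined by a finite family of forbidden weakly embedded substructures}, the edge relation is explicit: $b_i R b_j$ holds if and only if $(j - i) \bmod N \in \{1, 2, \ldots, k\}$. (Here $b_i \neq b_j$ is automatic, since $0 \notin \{1,\ldots,k\}$.)

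Now suppose toward contradiction that $G$ contains a directed $m$-cycle for some $m < n$; that is, there exist indices $i_0, i_1, \ldots, i_{m-1} \in \{0, \ldots, N-1\}$ with $b_{i_t} R b_{i_{t+1 \bmod m}}$ for every $0 \leq t \leq m-1$. For each such $t$, let $d_t$ denote the unique integer in $\{1, \ldots, k\}$ satisfying $d_t \equiv (i_{t+1 \bmod m} - i_t) \pmod{N}$, which exists by the characterization of $R$ above. Consider the sum
$$S := \sum_{t=0}^{m-1} d_t.$$
On the one hand, as a sum of $m$ integers each lying in $[1, k]$, we have $m \leq S \leq mk$. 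On the other hand, telescoping gives
$$S \equiv \sum_{t=0}^{m-1} (i_{t+1 \bmod m} - i_t) \equiv 0 \pmod{N}.$$
Since $S$ is a positive integer divisible by $N$, we must have $S \geq N$, and hence $mk \geq N$, i.e., $m \geq N/k$.

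The contradiction then comes from the arithmetic hypothesis $n - 1 < \frac{N}{k} < n$: the inequality $m \geq N/k > n - 1$, combined with $m$ being an integer, forces $m \geq n$, contradicting the assumption $m < n$. I do not expect any obstacle here beyond bookkeeping; the entire argument rests on the explicit description of the edge relation of $G$ as ``index differences in $\{1, \ldots, k\}$ modulo $N$,'' which converts the existence of a short cycle into a congruence that has no solution in the allowed range.
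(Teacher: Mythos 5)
Your proof is correct, and it takes a cleaner route than the paper's while resting on the same underlying counting idea. The paper first reduces to a cycle of minimal length $m$, invokes Claim \ref{minimal length cycles} to get distinct indices, normalizes $i_0 = 0$, and then runs an induction showing $i_j \leq kj$, deriving the contradiction from the closing edge $b_{i_{m-1}} R b_0$. Your telescoping argument bypasses the minimality reduction, the appeal to Claim \ref{minimal length cycles}, and the normalization entirely: the cyclic sum $\sum_t (i_{t+1 \bmod m} - i_t)$ vanishes identically, so $S = \sum_t d_t$ is a positive multiple of $N$ bounded above by $mk$, forcing $m \geq N/k > n-1$ and hence $m \geq n$. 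This works for any closed walk of length $m$, repeated vertices or not, so it is strictly more robust. The one point worth stating explicitly is why $d_t$ is unique: you need $k < N$, which follows from $N/k > n-1 \geq 2$. Both proofs encode the intuition that each step advances the cyclic index by at most $k$, so fewer than $N/k$ steps cannot wind once around; yours just packages it without the auxiliary reductions.
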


    \begin{proof}(of claim)

    Suppose otherwise, and let $b_{i_{0}}, \ldots b_{i_{m-1}}$ be an $m$-cycle. We may assume $m$ is minimal, so by Claim \ref{minimal length cycles}, none of the $i_{j}$ are repeated. By the definition of $G$, the map $b_{i} \mapsto b_{i - i_{0} \mathrm{\:mod\:} N}$ is a graph isomorphism, so we may assume $i_{0} = 0$. 
    
    We first show inductively that $i_{j} \leq kj$ for all $j \leq m -1$; suppose we have shown this for $j < m-1$ and we show it for $j+1$. First, since $\frac{N}{k} >  n-1 \geq m$, $mk < N$, so because $j \leq m-1$, $i_{j} \leq kj \leq km-k < N-k$. So by the definition of $G$ and since $b_{i_{0}}, \ldots b_{i_{m-1}}$ is an $m$-cycle, $k \geq i_{j+1} - i_{j} > 0$. So because $i_{j} \leq kj$, $i_{j+1} \leq kj + k = k(j+1)$.

So $i_{m-1} \leq k(m-1) = km - k$. But $km < N$, so $i_{m-1} < N -k $. Then by definition of $G$, and $i_{0} = 0$, it is impossible that $b_{i_{m-1}}Rb_{i_{0}}$, contradicting that $b_{i_{0}}, \ldots b_{i_{m-1}}$ is an $m$-cycle.
    \end{proof}

The proof of next claim contains the main argument of the proof of the theorem.

\begin{claim}\label{arbitrarily large minimal cycles}

For all integers $m \geq n$, there is a graph $K \notin \mathcal{H}$ none of whose directed cycles have length less than $m$.
    
\end{claim}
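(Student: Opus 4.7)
The plan is to proceed by induction on $m \geq n$. The base case $m = n$ is immediate: take $K = G$, which lies outside $\mathcal{H}$ by construction and has no cycles of length less than $n$ by Claim \ref{(N, k)-cycle has no m-cycles}. For the inductive step, I assume $K_m \notin \mathcal{H}$ has no cycles of length less than $m$ and aim to produce $K_{m+1} \notin \mathcal{H}$ with no cycles of length $\leq m$, iteratively eliminating the $m$-cycles of $K_m$ one at a time; if $K_m$ already has no $m$-cycles, set $K_{m+1} = K_m$.

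Otherwise, enumerate the (finitely many) $m$-cycles of $K_m$ as $\gamma_1, \ldots, \gamma_T$, and build a chain $L_0 = K_m,\, L_1,\, \ldots,\, L_T$ of graphs not in $\mathcal{H}$ together with graph homomorphisms $f_t \colon L_t \to K_m$ (taking $f_0 = \mathrm{id}$), maintaining the invariants that $L_t$ has no cycles of length less than $m$, and that no $m$-cycle $\gamma'$ of $L_t$ satisfies $f_t \circ \gamma' = \gamma_s$ for any $s \leq t$. At stage $t+1$, apply Proposition \ref{cycle removal} with $g = f_t$ and the $m$-cycle $\gamma_{t+1}$ in $K_m$ (which is minimal in $K_m$ and has length $m \geq n$) to obtain an $m$-helix map $h_{t+1} \colon L_{t+1} \twoheadrightarrow L_t$ killing the $m$-cycles of $L_{t+1}$ lying above $\gamma_{t+1}$, and set $f_{t+1} = f_t \circ h_{t+1}$. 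The second invariant for $s = t+1$ is exactly the conclusion of Proposition \ref{cycle removal}, and for $s \leq t$ it follows from the stage-$t$ invariant via Lemma \ref{pulling back does not create cycles} applied with $g = f_t$, $h = h_{t+1}$, $\gamma = \gamma_s$.

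The central technical point, which I expect to be the main obstacle, is verifying the first invariant — that the $m$-helix pullback of a graph with no cycles of length less than $m$ still has none. The proof rests on a closed-walk splitting argument: if $\gamma'$ were a cycle of length $\ell < m$ in $L_{t+1}$, then $h_{t+1} \circ \gamma'$ would be a closed walk of length $\ell$ in $L_t$. Either this walk is already a simple cycle (of length $\ell < m$), contradicting the invariant at stage $t$; or it has a repeated vertex, in which case it decomposes into two nontrivial closed walks whose lengths sum to $\ell$, each of length at most $\ell - 1 < m$, and each such sub-walk contains a simple cycle of length at most its own, again contradicting the invariant at stage $t$.

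Finally, after stage $T$, the same closed-walk splitting argument, now applied to $f_T$ together with the outer inductive hypothesis that $K_m$ has no cycles of length less than $m$, shows that any $m$-cycle of $L_T$ must map via $f_T$ to a genuine $m$-cycle of $K_m$ rather than to a collapsed walk. But the second invariant at stage $T$ forbids any $m$-cycle of $L_T$ from mapping to any of $\gamma_1, \ldots, \gamma_T$, which exhausts the $m$-cycles of $K_m$. Hence $L_T$ has no $m$-cycles, and combined with the first invariant, no cycles of length $\leq m$. Setting $K_{m+1} := L_T$ completes the induction.
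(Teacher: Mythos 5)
Your proposal is correct and follows essentially the same inductive skeleton as the paper's proof: base case $K = G$; iterate over the (finitely many) minimal-length cycles in $K$, applying Proposition \ref{cycle removal} once per cycle and composing the resulting helix maps, then invoking Lemma \ref{pulling back does not create cycles} to preserve earlier exclusions; finally check that the end result has no cycles of length $\leq m$. One presentational difference is worth noting. The paper treats a ``directed $k$-cycle'' as an arbitrary graph homomorphism from the abstract $k$-cycle graph, not requiring injectivity, so the pushforward $g \circ \gamma'$ under a graph homomorphism $g$ is automatically a $k$-cycle of the same length; Claim \ref{minimal length cycles} then guarantees that $m$-cycles in $K$ (being of minimal length) are injective, so they are exhausted by the enumeration $\gamma_1, \ldots, \gamma_{N^*}$. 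This makes both final checks one-liners and avoids the closed-walk splitting argument entirely, which is why the paper does not even bother to maintain your first invariant along the chain -- it simply observes at the end that $g = g_{N^*}$ is a graph homomorphism, so shorter cycles in $K'$ would push forward to shorter cycles in $K$. Your version, working with simple cycles and extracting one from the homomorphic image via closed-walk decomposition, proves the same thing by slightly more explicit means and is perfectly valid; it just re-derives on the fly a fact the paper has already packaged into its definition of cycle and into Claim \ref{minimal length cycles}. So the closed-walk splitting is not, as you anticipated, the main obstacle -- it dissolves once you adopt the homomorphism viewpoint -- but your handling of it is sound.
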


\begin{proof}(of claim)

In the base case $m = n$, we can take $K = G$, by Claim \ref{(N, k)-cycle has no m-cycles}. It remains to prove the inductive step: suppose there is a graph $K \notin \mathcal{H}$ where the smallest directed cycle has length $m \geq n$. We find a graph $K' \notin \mathcal{H}$ where the smallest directed cycle has length greater than $m$. Let $\gamma_{1} \hookrightarrow K, \ldots \gamma_{N^{*}} \hookrightarrow K$ enumerate the directed cycles of minimal length (i.e., of length $m$) in $K$. By induction on $0 \leq j \leq N^{*}$, we find some graph $K_{j} \notin \mathcal{H}$ with a graph homomorphism $g_{j}: K_{j} \to K$ for $K_{j} \notin \mathcal{H}$ such that, for $1 \leq i \leq j$, there is no $m$-cycle $\gamma \hookrightarrow K_{j}$ such that $\gamma_{i} = g_{j} \circ \gamma$. For $j = 0$ we can just take $g_{0}: K_{0} \to K$ to be the identity on $K_{0} := K$, because $K_{0} \notin \mathcal{H}$ and there are no additional requirements on $g_{0}$. Now suppose, for $0 \leq j < N^{*}$, we have found $g_{j}: K_{j} \to K$ as desired; we find $g_{j+1}: K_{j+1} \to K$ as desired. Recall that $\mathcal{H}$ is $\mathrm{NSOP}_{n}$, so $\mathrm{NSOP}_{m}$ because $m \geq n$. So by Lemma \ref{cycle removal}, we can find an $m$-helix map $h_{j+1}: \tilde{K_{j}} \twoheadrightarrow K_{j}$ of some length, such that $\tilde{K_{j}} \notin \mathcal{H}$ and there is no $m$-cycle $\gamma \hookrightarrow K_{j}$ such that $\gamma_{j+1} = (g_{j} \circ h_{j+1}) \circ \gamma$. Define $K_{j+1}:= \tilde{K_{j}}$, so $K_{j+1} \notin \mathcal{H}$, and $g_{j+1}:= g_{j} \circ h_{j+1} $; then there is no $m$-cycle $\gamma \hookrightarrow K_{j+1}$ such that $\gamma_{j+1} = (g_{j+1} ) \circ \gamma$. Moreover, by Lemma \ref{pulling back does not create cycles} and the hypothesis on $g_{j}$, because $g_{j+1}= g_{j} \circ h_{j+1} $, for $i \leq j$ there is no $m$-cycle $\gamma \hookrightarrow K_{j+1}$ such that $\gamma_{i} = (g_{j+1} ) \circ \gamma$, so this is true for $i \leq j+1$, completing the induction.

Then, define $K':= K_{N^{*}}$, and $g: K' \to K$ to be $g_{N^{*}}$. Then $K'$ has no $m'$-cycles for $m' < m$, because for such a cycle $\gamma \hookrightarrow K'$, $g \circ \gamma$ would be an $m'$-cycle in $K$, when we assumed that $K$ has no $m'$-cycles. But $K'$ does not even have any $m$-cycles, because for an $m$-cycle $\gamma \hookrightarrow K'$, $g \circ \gamma$ would have to appear among the $\gamma_{1} \hookrightarrow K, \ldots \gamma_{N^{*}} \hookrightarrow K$, contradicting the hypothesis on $g_{N^{*}}$.
    
\end{proof}

We now complete the proof of the theorem. This last step relies on a similar argument to one used in the discussion preceding Conjecture 5.2 of \cite{LZ17}: if a hereditary class of graphs, defined by a finite family of forbidden weakly embedded subgraphs, contains all cyclefree graphs, then for some $m$ it must contain all graphs without a cycle of length less than $m$. We give the full argument as it applies to our setting. For $\mathcal{F}$ a finite family of graphs such that $\mathcal{H} = \mathcal{H}(\mathcal{F})$, let $m$ be such that every graph in $\mathcal{F}$ with a directed cycle has a directed cycle of length less than $m$. By the last claim, let $K \notin \mathcal{H}$ have no directed cycles of length less than $m$. Then $\mathcal{H}$ must contain some $F \in \mathcal{F}$, which then has no directed cycles of length less than $m$, so no directed cycles at all. Of course, $F \notin \mathcal{H}$. But:

\begin{claim}\label{every cycle-free graph embeds in the linear order}

Every cycle-free directed graph $F$ can be weakly embedded in the graph $H$ (equivalently, in the linear order relation on $\
\omega$).

\end{claim}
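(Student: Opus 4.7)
The plan is to exhibit $F$ as (a subrelation of) a linear order by carrying out a topological sort on $F$. Since $F$ is a finite directed graph without directed cycles, it is a finite DAG, and I will show by induction on $|F|$ that there is an injection $\iota : F \to \omega$ with $\iota(v_1) < \iota(v_2)$ whenever $(v_1,v_2)$ is an edge of $F$. Such an $\iota$ is precisely a weak embedding of $F$ into $H$, since $R(H)$ is the strict order $<$ on $\omega$.

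First I would establish the standard lemma that any finite cycle-free directed graph $F$ with at least one vertex has a \emph{source}, i.e.\ a vertex $v$ with no incoming edges. The proof is the usual one: if every vertex had an incoming edge, then starting from any vertex $w_0$ and repeatedly picking a predecessor $w_{i+1}$ with $w_{i+1} R w_i$, the sequence $w_0, w_1, w_2, \ldots$ would be infinite, so by finiteness of $F$ some $w_j = w_k$ with $j < k$, yielding a directed cycle $w_k, w_{k-1}, \ldots, w_{j+1}$, contrary to hypothesis.

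Next, induction on $|F|$. If $|F| = 0$ there is nothing to prove. If $|F| \geq 1$, pick a source $v \in F$, and apply the inductive hypothesis to the induced subgraph $F \setminus \{v\}$ (which is still cycle-free, since removing a vertex cannot create a cycle) to obtain a weak embedding $\iota': F \setminus \{v\} \hookrightarrow H$ with image contained in $\{a_1, a_2, \ldots\}$. Define $\iota: F \to H$ by $\iota(v) := a_0$ and $\iota(w) := \iota'(w)$ for $w \neq v$. This is injective, and since $v$ is a source, every edge of $F$ is either an edge of $F \setminus \{v\}$ (preserved by $\iota'$) or an edge out of $v$, say $(v, w)$, which is mapped to $(a_0, \iota'(w))$ with $a_0 R \iota'(w)$ in $H$.

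There is no real obstacle here; the only thing to verify is that the notion of weak embedding requires only the forward implication for the edge relation $R$ (as in Definition \ref{weak embedding}), so non-edges of $F$ are allowed to become edges in $H$, as they will since $H$ is a total order. This completes the proof, and combined with the previous claim it delivers a contradiction: the cycle-free $F \in \mathcal{F}$ weakly embeds in $H \in \mathcal{H}$, forcing $F \in \mathcal{H}$, contradicting $F \in \mathcal{F}$ and $\mathcal{H} = \mathcal{H}(\mathcal{F})$.
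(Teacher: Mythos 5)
Your proof is correct, and it establishes the claim by a genuinely different (though equally standard) route from the paper. You prove that a finite DAG has a topological ordering by the ``source removal'' argument: show a finite acyclic digraph has a vertex with no incoming edges, peel that vertex off, and recurse, building the embedding into $\omega$ explicitly vertex by vertex. The paper instead argues in the Szpilrajn/linear-extension style: given a pair of distinct vertices with no edge between them, one direction of edge can be added without creating a cycle (because a cycle-free graph cannot have directed paths both from $v_1$ to $v_2$ and from $v_2$ to $v_1$); iterating, it extends $F$ to an acyclic tournament, which must be a linear order since an intransitivity would give a directed $3$-cycle. Both arguments yield the same conclusion, namely that the edge relation of $F$ refines to a total order, hence weakly embeds into $(\omega,<)$. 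Your approach is more constructive and gives the weak embedding directly as a topological sort; the paper's is closer to the classical order-extension principle and emphasizes that $F$ becomes a \emph{sub}relation of a linear order, which is precisely what a weak embedding is. Either is a fine proof of this well-known fact, and both correctly use that a weak embedding (Definition \ref{weak embedding}) only requires edges to map to edges, so non-edges of $F$ becoming comparabilities in the order causes no issue.
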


\begin{proof}
    The following is well-known. Let $F$ be a finite, cycle-free directed graph. It suffices to show that, by adding edges between pairs of distinct vertices that do not currently have an edge in either direction, we can extend the edge relation on $F$ to a linear order on the vertices.
   
   If $F$ does not have an edge (in either direction) between every pair of (distinct) vertices, we show we can add an edge between an arbitrary pair of vertices that currently have no edge, and still get a cycle-free graph; then by induction, we will get a cycle-free graph such that every pair of vertices has an edge, which must be a linear order, because otherwise it would have a $3$-cycle. Let $v_{1}, v_{2}$ be a pair of vertices with no edge. Since $F$ has no cycles, either there is no (directed) path from $v_{1}$ to $v_{2}$, or there is no path from $v_{2}$ to $v_{1}$. In the first case, adding an edge from $v_{2}$ to $v_{1}$ will preserve that $F$ is cycle-free, so we are done. Similarly, in the second case, adding an edge from $v_{1}$ to $v_{2}$ will preserve that $F$ is cycle-free, so we are done in this case as well.
\end{proof}

So $F\notin \mathcal{H}$ weakly embeds in $H$, and $H \notin \mathcal{H}$, a contradiction.

\end{proof}

\section{Interval helices and the general case}

We return to Question \ref{hereditary class distinctness} for general hereditary classes. By Fact \ref{closure under helix maps} and Remark \ref{reduction to directed graphs}, the following statements are equivalent:

(1) the properties $\mathrm{NSOP}_{n}$ for integers $n \geq 3$ and the properties $\mathrm{NSOP}_{r}$ for real numbers $r \geq 3$ are distinct for hereditary classes (i.e., the answer to the second part of Question 2.14 is yes).

(2) For some real number $r \geq 3$ and $n = \lceil r \rceil$ the next integer, there is a hereditary class $\mathcal{H}$ of directed graphs with graph relation $R$, closed under weak embeddings and closed under $n$-helix maps, such that the graph $\{a_{i}\}_{i < \omega} \in \mathcal{H}$ where $a_{i} R a_{j}$ exactly when $i < j$, but $\{a_{\theta}\}_{\theta \in S^{1}} \notin \mathcal{H}$ where $a_{\theta} R a_{\psi}$ exactly when $\psi$ lies at most $\frac{2\pi}{r}$ radians counterclockwise from $\theta$.

In proving Theorem \ref{integerality}, we showed that (2) fails when $\mathcal{H}$ is also required to be defined by a finite family of forbidden weakly embedded substructures: in this case, if $\{a_{i}\}_{i < \omega} \in \mathcal{H}$ but $\{a_{\theta}\}_{\theta \in S^{1}} \notin \mathcal{H}$ as in the statement, there must be some $n$-helix map under which $\mathcal{H}$ is not closed. But we showed more: $\mathcal{H}$ must fail to be closed under a special kind of $k$-helix map for some integer $k \geq n$, specifically those associated with a directed cyclic $k$-decomposition of the form $H = \bigsqcup^{k-1}_{i =0} G_{i} \sqcup D$, where the $G_{i}$ are independent sets. Unwinding the proof, these are the helix maps constructed in the proof of Proposition \ref{cycle removal}. There, the cyclic $k$-decompositions are directed, and the $G_{i}$ are independent sets, because the $G_{i}$ are the fibers of a graph homomorphism above vertices $v_{i}$, where $v_{i} R v_{i + 1 \mathrm{\:mod \:} }$ for $i < n$. To refer to special helix maps like these, we give the following three increasingly strong definitions:

\begin{definition}\label{special helices}
    Let $h: H \twoheadrightarrow G $ be an $n$-helix map associated with a cyclic $n$-decomposition $G = \bigsqcup^{n-1}_{i =0} G_{i} \sqcup D$. Then:

    \begin{itemize}
        \item The map $h$ is a \textit{directed $n$-helix map} if the cyclic $n$-decomposition $G = \bigsqcup^{n-1}_{i =0} G_{i} \sqcup D$ is directed (Definition \ref{cyclic n-decomposition}).

        \item The map $h$ is an \textit{$n$-interval helix map} if it is an directed $n$-helix map where the $G_{i}=\sqcup_{j < n_{i}} S^{j}_{i}$ are disjoint unions of sets $S^{j}_{i}$ that are linearly ordered by $R$, where there are no edges between vertices of $S^{j}_{i}$ and $S^{j'}_{i}$ (in either direction) when $j \neq j'$.

\item The map $h$ is an \textit{$n$-anticlique helix map} if it is an directed $n$-helix map where the $G_{i}$ are independent sets (and is therefore an $n$-interval helix map.)

    \end{itemize}

    For $\ell > 1$, \emph{directed $n$-helix maps of length $\ell$}, etc., are as in Definition \ref{helix map}.
\end{definition}

It is therefore a corollary of the proof of Theorem 3.1 that the following stronger statement holds:

\begin{cor}\label{anticlique helix integrality}
 Let $\mathcal{H}$ be a hereditary class of directed graphs with graph relation $R$, defined by a finite family of forbidden weakly embedded substructures. Suppose that for $r \geq 3$ a real number, the graph $\{a_{i}\}_{i < \omega} \in \mathcal{H}$ where $a_{i} R a_{j}$ exactly when $i < j$, but $\{a_{\theta}\}_{\theta \in S^{1}} \notin \mathcal{H}$ where $a_{\theta} R a_{\psi}$ exactly when $\psi$ lies at most $\frac{2\pi}{r}$ radians counterclockwise from $\theta$. Then $\mathcal{H}$ is not closed under some $k$-anticlique helix map, for some integer $k \geq n$.
\end{cor}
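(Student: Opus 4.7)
The strategy is to repeat the argument of Theorem \ref{integerality} while tracking the structure of every helix map it invokes, and verify that each one is in fact an anticlique helix map. Arguing by contradiction, I would suppose that $\mathcal{H}$ is closed under every $k$-anticlique helix map for every integer $k \geq n$, where $n = \lceil r \rceil$, and derive that $\{a_{\theta}\}_{\theta \in S^{1}} \in \mathcal{H}$, contradicting the hypothesis.

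First I would extract a finite witness. Since $\{a_{\theta}\}_{\theta \in S^{1}} \notin \mathcal{H}$, some finite subconfiguration fails membership, and enlarging to an equally-spaced set of $N$ points yields an $(N,k)$-cycle $G \notin \mathcal{H}$ with $k = \lfloor N/r \rfloor$. Since $r \leq n$ and $N/k \to r$ from above as $N \to \infty$, I may choose $N$ so that $r \leq N/k \leq n$; for $r$ non-integer this gives $N/k < n$ strictly, while for $r = n$ I may take $G$ itself to be a directed $n$-cycle and invoke Fact \ref{closure under helix maps} directly on the trivial anticlique $n$-helix cover, so I focus on the former case. By Claim \ref{(N, k)-cycle has no m-cycles}, $G$ then has no directed cycles of length less than $n$, so the induction of Claim \ref{arbitrarily large minimal cycles} can be initiated with $G$ as its base case.

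The key observation is that the helix map $h \colon \tilde{H} \twoheadrightarrow H$ produced by Proposition \ref{cycle removal} is always an anticlique helix map of cycle length $k \geq n$. Indeed, its associated cyclic $k$-decomposition $H = \bigsqcup_{i=0}^{k-1} G_{i} \sqcup D$ has regions $G_{i} = g^{-1}(\{\gamma(\gamma_{i})\})$ that are fibers of a graph homomorphism above a single vertex; since Definition \ref{graphhomomorphism} forbids collapsing an edge to a loop, each $G_{i}$ is an independent set. The decomposition is directed because, by Claim \ref{minimal length cycles}, the minimal cycle $\gamma$ is an isomorphism onto its image, so no edge from $\gamma(\gamma_{i+1 \,\mathrm{mod}\, k})$ to $\gamma(\gamma_{i})$ exists in $G$, and hence, since $g$ is a homomorphism, none from $G_{i+1 \,\mathrm{mod}\, k}$ to $G_{i}$ in $H$.

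Substituting the closure hypothesis for the uses of Fact \ref{closure under helix maps} in the induction of Claim \ref{arbitrarily large minimal cycles}, I produce, for every $m \geq n$, a graph $K_{m} \notin \mathcal{H}$ with no directed cycle of length less than $m$. Then, with $m$ chosen larger than every cycle length appearing in the finite forbidden family $\mathcal{F}$ with $\mathcal{H} = \mathcal{H}(\mathcal{F})$, the closing argument using Claim \ref{every cycle-free graph embeds in the linear order} produces a cycle-free $F \in \mathcal{F}$ weakly embedded in $\{a_{i}\}_{i<\omega} \in \mathcal{H}$, so $F \in \mathcal{H}$, contradicting $F \in \mathcal{F}$. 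Since the only content beyond Theorem \ref{integerality} is the anticlique verification in the previous paragraph, which is immediate from the definitions, I do not foresee a serious obstacle.
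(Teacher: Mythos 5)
Your proposal matches the paper's intended proof: the paper gives no separate argument for the corollary, instead appealing to the paragraph before it, which makes exactly the observation you isolate — that the helix maps produced in Proposition \ref{cycle removal} are anticlique helix maps, since the regions $G_i = g^{-1}(\{\gamma(\gamma_i)\})$ are fibers of a graph homomorphism above single vertices (hence independent) and the decomposition is directed because the minimal cycle $\gamma$ is, by Claim \ref{minimal length cycles}, an isomorphism onto its image. Your substitution of ``closed under $k$-anticlique helix maps'' for ``$\mathrm{NSOP}_k$'' in Proposition \ref{cycle removal} and Claim \ref{arbitrarily large minimal cycles} is sound, since the only closure being invoked there is for the specific helix map constructed, which you have verified is anticlique, and Lemma \ref{pulling back does not create cycles} is helix-free.

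One detail in your treatment of the edge case $r=n$ an integer is not right: you say you ``may take $G$ itself to be a directed $n$-cycle,'' but the hypothesis only yields that some $(N,k)$-cycle with $N/k \geq n$ (and typically $N > n$) is not in $\mathcal{H}$. Since $\mathcal{H}$ is closed under weak embeddings, knowing that the $(N,N/n)$-cycle is outside $\mathcal{H}$ does not let you conclude that the directed $n$-cycle (which weakly embeds into it) is also outside $\mathcal{H}$. The shortcut via the trivial anticlique $n$-helix cover is therefore unjustified. This is easily repaired, though: you do not need the shortcut, because Claim \ref{(N, k)-cycle has no m-cycles} only uses $N/k > n-1$, which holds when $N/k \geq n$, so the base case of the induction in Claim \ref{arbitrarily large minimal cycles} can still be initiated with the $(N,k)$-cycle itself. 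With that patch, the argument covers both the integer and non-integer cases uniformly, exactly as in Theorem \ref{integerality}.
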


    Though, as the above corollary makes precise, anticlique helix maps are enough to prove that $\mathfrak{o}(\mathcal{H})$ is an integer when $\mathcal{H}$ is a hereditary class defined by a finite family of forbidden weakly embedded substructures, in the main results of this section we will be interested in the larger class of interval helix maps, obtaining a stronger theorem than were we just to consider the anticlique helix maps. Because all anticlique helix maps are interval helix maps, Corollary \ref{anticlique helix integrality} \textit{applies to the class of interval helix maps as well}, and interval helix maps are similarly enough to prove that $\mathfrak{o}(\mathcal{H})$ is an integer when $\mathcal{H}$ is a hereditary class defined by a finite family of forbidden weakly embedded substructures. But moreover, interval helix maps are enough to show that the properties $\mathrm{NSOP}_{n}$, for integers $n \geq 3$, can in fact be restated so that they extend to the hierarchy $\mathrm{NSOP}_{r}$, for real $r \geq 3$, as in Observation \ref{restatement of NSOP_n}. More precisely, the following follows from the proofs of Observation \ref{restatement of NSOP_n} (and Lemma \ref{finitary and infinitary conditions}):

    \begin{cor}\label{interval helix restatement of NSOP_n}
        Let $R$ be a hereditary class of graphs closed under weak embeddings, with graph relation $R$, and suppose  the graph $\{a_{i}\}_{i < \omega} \in \mathcal{H}$ where $a_{i} R a_{j}$ exactly when $i < j$. Suppose additionally that $\mathcal{H}$ is closed under $n$-interval helix maps. Then the graph $\{a_{\theta}\}_{\theta \in S^{1}} \in \mathcal{H}$ where $a_{\theta} R a_{\psi}$ exactly when $\psi$ lies at most $\frac{2\pi}{n}$ radians counterclockwise from $\theta$. 
    \end{cor}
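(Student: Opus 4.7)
The plan is to adapt the proofs of Observation \ref{restatement of NSOP_n} and Lemma \ref{finitary and infinitary conditions} to the interval-helix setting, with closure under $n$-interval helix maps replacing the role played by $\mathrm{NSOP}_{n}$. Via Remark \ref{hereditary classes and quantifier-free formulas}, the hereditary-class analogues of Lemmas \ref{(N, k)-cycle comparison} and \ref{finitary and infinitary conditions} apply directly to $\mathcal{H}$ and the quantifier-free type of an $R$-edge, so it suffices to produce, for every integer $k \geq 1$, the $(kn, k)$-cycle graph $G_{k}$ (in the sense of Notation \ref{(N, k)-cycle}) inside $\mathcal{H}$. Once this is done, $\mathcal{H}$ has (*)$_{kn,k}$ for every $k$, hence (*)$_{N,k'}$ whenever $\tfrac{N}{k'} \geq n$ by Lemma \ref{(N, k)-cycle comparison}, and the compactness argument of $(1 \Rightarrow 2)$ in Lemma \ref{finitary and infinitary conditions} assembles these into the desired circle configuration $\{a_{\theta}\}_{\theta \in S^{1}} \in \mathcal{H}$ with $a_{\theta} R a_{\psi}$ iff $\psi$ lies at most $\tfrac{2\pi}{n}$ radians counterclockwise from $\theta$.

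To place $G_{k}$ in $\mathcal{H}$, I would display it as the target of an $n$-interval helix map. Enumerate the vertices of $G_{k}$ cyclically as $v_{0}, \ldots, v_{kn-1}$ and partition them into blocks $G_{i} = \{v_{ki}, v_{ki+1}, \ldots, v_{ki+(k-1)}\}$ for $0 \leq i \leq n-1$. A direct computation of cyclic distances (the cyclic distance from $v_{ki+m}$ to $v_{k(i+1 \bmod n)+m'}$ is $k + m' - m$) shows that each $G_{i}$ is a single linearly $R$-ordered set, that all cross-edges go from some $G_{i}$ to $G_{i+1 \bmod n}$ (so the decomposition is directed), and that the cross-edge from $v_{ki+m}$ to $v_{k(i+1 \bmod n)+m'}$ is present exactly when $m' \leq m$. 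Hence $G_{k} = \bigsqcup_{i=0}^{n-1} G_{i}$ is a full directed cyclic $n$-decomposition in which each block is a single linear order, so by Definition \ref{special helices} the associated $n$-helix map $h \colon H_{k} \twoheadrightarrow G_{k}$ (of any length $\ell \geq 1$) is an $n$-interval helix map.

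The remaining step is to weakly embed the source $H_{k}$ into $\{a_{s}\}_{s<\omega}$. Recall that $H_{k}$ has vertices $g_{i}^{j}(m)$ for $0 \leq i \leq n-1$, $0 \leq j < \ell$, $0 \leq m \leq k-1$, with edges inside each slice $G_{i}^{j}$ inherited from the internal linear order on $G_{i}$ and with cross-edges between $G_{i}^{j}$ and $G_{i+1 \bmod n}^{j'}$ only for $j < j'$, following the pattern $m' \leq m$. Order the vertices lexicographically by the key $(j, i, m)$ and send $g_{i}^{j}(m)$ to $a_{s}$ where $s$ is its lex rank. Every edge of $H_{k}$ then points from smaller lex rank to larger: edges within a single slice $G_{i}^{j}$ increase $m$, while every cross-edge between $G_{i}^{j}$ and $G_{i+1 \bmod n}^{j'}$ has $j < j'$ and so increases the first coordinate. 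This is a one-to-one, edge-preserving map into the linear $R$-order on $\{a_{s}\}_{s<\omega}$, hence a weak embedding. Closure of $\mathcal{H}$ under weak embeddings gives $H_{k} \in \mathcal{H}$, and closure under $n$-interval helix maps gives $G_{k} \in \mathcal{H}$, completing the main step.

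The only real obstacle is the bookkeeping around the cross-edge pattern $m' \leq m$: one must verify both that this pattern makes the cyclic $n$-decomposition of $G_{k}$ directed (with each block a single linear order, so that the associated helix map is an interval helix map and not merely a directed helix map), and that it is compatible with the lex order chosen on $H_{k}$ (so that every edge of $H_{k}$ really does point forward). Both checks reduce to the cyclic distance calculation above and are routine once set up.
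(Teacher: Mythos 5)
Your proof is correct and is essentially the proof the paper has in mind: the paper gives no separate argument for Corollary~\ref{interval helix restatement of NSOP_n}, stating only that it follows from the proofs of Observation~\ref{restatement of NSOP_n} and Lemma~\ref{finitary and infinitary conditions}, and what you have done is make explicit that the ``tuple trick'' in Observation~\ref{restatement of NSOP_n} (grouping $k$ consecutive vertices of the linear order into a block $G_i$) is exactly a full directed cyclic $n$-decomposition of the $(kn,k)$-cycle in which each $G_i$ is a single $R$-chain, so the associated helix map is an interval helix map whose source weakly embeds in $\{a_s\}_{s<\omega}$. The cyclic-distance computation establishing the $m'\leq m$ cross-edge pattern is right, and the lexicographic key $(j,i,m)$ does give an edge-forward enumeration of the helix source. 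One small caution in the write-up: your appeal to the hereditary-class analogue of Lemma~\ref{(N, k)-cycle comparison} ``for the quantifier-free type of an $R$-edge'' needs a word, because the $(\Rightarrow)$ direction of Claim~\ref{multiply numerator and denominator} re-tuples the relation and so genuinely uses the quantification over all relations. Fortunately the only direction you need is the one that \emph{lowers} the ratio $\tfrac{N}{k}$ toward $n$, which is the subsampling $(\Leftarrow)$ direction together with the trivial passage to fewer required edges, and both of those are weak-substructure operations valid for the fixed edge type; alternatively, since $n$ is an integer, one can simply enlarge the finite angle set in the compactness step of Lemma~\ref{finitary and infinitary conditions} so that $N$ is a multiple of $n$ and only $(kn,k)$-cycles are ever invoked.
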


    It is not clear that the same is true when $\mathcal{H}$ is closed under $k$-anticlique helix maps for integers $k \geq n$. This motivates us to consider interval helix maps, not just anticlique helix maps, in Theorem \ref{interval helix non-integrality} below.

    \begin{remark}\label{alternative motivation for being closed under special helix maps}
        We can motivate the property of being closed under interval helix maps without directly appealing to helix maps themselves. It follows from the proof of Fact \ref{closure under helix maps} (noting that the property of being a disjoint union of sets that are linearly ordered by $R$, where there are no edges between this sets, is itself preserved under taking disjoint unions of graphs with no new edges), that

        \begin{fact}\label{alternative definition of being closed under interval helix maps}

For $\mathcal{H}$ a hereditary class of graphs with edge relation $R$. Then $\mathcal{H}$ is closed under $n$-anticlique helix maps if and only if, for every graph of the form $AB$ with a sequence $\{A_{i}\}_{i < \omega} \in \mathcal{H}$ such that

\begin{itemize}
    \item $A_{i}A_{j} \equiv AB$ for all $i < j$,

    \item for $C = A_{0} \cap A_{1}$, and for all $i < j$, the only edges from any $v \in A_{i} \backslash C$ to any $v' \in A_{j} \backslash C$ are in the direction from $v$ to $v'$,

    \item and for all $i < \omega$, $A_{i} \backslash C =\sqcup_{j < n_{i}} S^{j}_{i}$ is a disjoint union of sets $S^{j}_{i}$ that are linearly ordered by $R$, where there are no edges between vertices of $S^{j}_{i}$ and $S^{j'}_{i}$ when $j \neq j'$,

\end{itemize}

then there are $\{A_{i}\}^{n-1}_{i = 0}$ such that $A_{i}A_{i+1 \mathrm{\: mod \:} n} \equiv AB$ for $i < n$.

        \end{fact}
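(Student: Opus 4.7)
The plan is to adapt the proof of Fact 3.8 (Malliaris's equivalence between $\mathrm{NSOP}_n$ and closure under general $n$-helix maps) to the interval helix setting, with the key additional observation that the interval structure is preserved under the constructions involved, as indicated in the parenthetical remark immediately preceding the statement. (I proceed under the convention, consistent with the surrounding text, that the statement concerns closure under $n$-interval helix maps; the condition displayed in (b) is precisely the disjoint-union-of-linear-orders condition on each $A_i \backslash C$.)

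For the forward direction ($\Rightarrow$), assume $\mathcal{H}$ is closed under $n$-interval helix maps and let $\{A_i\}_{i<\omega} \in \mathcal{H}$ satisfy the three listed conditions, with $C := A_0 \cap A_1$. Following the first paragraph of the proof of Fact 3.8, I form the graph $G = \bigsqcup_{i=0}^{n-1} G_i \sqcup C$ with each $G_i$ a copy of $A_0 \backslash C$ and with edges arranged so that $(G_i C)(G_{i+1 \mathrm{\: mod \:} n} C) \equiv AB$ and no other edges are present. The resulting cyclic $n$-decomposition of $G$ is directed by condition (2) on the sequence, and each $G_i$ is a disjoint union of linearly ordered sets with no cross-edges by condition (3); hence the associated $n$-helix map $h: H \twoheadrightarrow G$ is in fact an $n$-interval helix map. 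Just as in the proof of Fact 3.8, $H$ weakly embeds into $\{A_i\}_{i<\omega}$ by sending each $G^j_i$ to the subset of $A_{jn+i}$ corresponding to $A_0 \backslash C$ under the canonical isomorphism $A_0 \equiv_C A_{jn+i}$, so $H \in \mathcal{H}$. Closure under $n$-interval helix maps gives $G \in \mathcal{H}$, and taking $G_0 C, G_1 C, \ldots, G_{n-1} C$ yields the desired $n$-cycle.

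For the converse ($\Leftarrow$), assume condition (b), and let $h: H \twoheadrightarrow G$ be an $n$-interval helix map associated with a cyclic $n$-decomposition $G = \bigsqcup_{i=0}^{n-1} G_i \sqcup D$, with $H \in \mathcal{H}$. Using the notation of Definition 3.7, define $\tilde{A}_j := G^j_0 \cdots G^j_{n-1} D$ for $j < \omega$, and verify the three conditions of (b): condition (1) follows from Definition 3.7(1), condition (2) from $h$ being directed, and condition (3) uses that each $G^j_i$ is already a disjoint union of linearly ordered sets with no cross-edges (since $h$ is an interval helix map) while Definition 3.7(2) forbids any edges between $G^j_i$ and $G^j_{i'}$ for $i \neq i'$, so $\tilde{A}_j \backslash D$ inherits the disjoint-union-of-linear-orders structure. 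Applying (b) produces $\{\tilde{A}'_i\}_{i=0}^{n-1} \in \mathcal{H}$ with $\tilde{A}'_i \tilde{A}'_{i+1 \mathrm{\: mod \:} n} \equiv \tilde{A}_0 \tilde{A}_1$. Writing $\tilde{A}'_i = K^i_0 \cdots K^i_{n-1} D$ and $K := \bigcup_i \tilde{A}'_i \in \mathcal{H}$, the diagonal argument of the second paragraph of the proof of Fact 3.8 exhibits $G$ as a weak substructure of $K$ via the identity on $D$ together with the isomorphism $G_i \equiv_D K^i_i$ obtained by restricting $\tilde{A}'_i \tilde{A}'_{i+1 \mathrm{\: mod \:} n} \equiv_D \tilde{A}_0 \tilde{A}_1$. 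Hence $G \in \mathcal{H}$.

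Because the argument is essentially a transcription of the proof of Fact 3.8 to the interval setting, the only step requiring genuinely new verification is the preservation of the interval structure at each stage: in the forward direction each $G_i$ inherits it from $A_0 \backslash C$, and in the converse direction each $\tilde{A}_j \backslash D$ inherits it from its constituent $G^j_i$'s because Definition 3.7(2) prevents the appearance of any cross-edges. This verification is routine and is precisely the content of the parenthetical remark immediately preceding the statement.
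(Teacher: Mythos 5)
Your proposal is correct and takes the same route as the paper, which presents this fact as a direct adaptation of the proof of Fact \ref{closure under helix maps} (the paper gives no separate proof beyond the parenthetical remark about preservation of the interval structure). You also correctly flag that the statement's use of ``$n$-anticlique helix maps'' is at odds with both the label and the third bullet condition (which describes the interval-structure hypothesis, not the independent-set hypothesis), and your reading as ``interval'' is the one that makes the equivalence true as stated --- with the anticlique reading, the forward direction would stall because the helix constructed from the conditions is an interval helix, not necessarily an anticlique helix. The one small thing worth making explicit is that the conclusion ``there are $\{A_i\}_{i=0}^{n-1}$'' is to be read as a realization inside $\mathcal{H}$ (i.e.\ with $\bigcup_i A_i \in \mathcal{H}$), as you tacitly assume in the converse direction when invoking $K \in \mathcal{H}$; this matches the paper's usage elsewhere.
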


Similar equivalences hold for being closed under directed helix maps, and being closed under anticlique helix maps. Note the similarity of these equivalent conditions to the definition of $\mathrm{NSOP}_{n}$: they say that if a quantifier-free type $p(X, Y)$ has $\{A_{i}\}_{i< \omega} \in \mathcal{H}$ such that $\models p(A_{i}, A_{j})$ for $i < j$, then $p(X, Y)$ also has an $n$-cycle, assuming that $\{A_{i}\}_{i< \omega} \in \mathcal{H}$ satisfies additional conditions. However, though these equivalent conditions for, say, being closed under $n$-interval helix maps appear to be refinements of $\mathrm{NSOP}_{n}$, it should not be expected to follow that being closed under $n$-interval helix maps implies being closed under $m$-interval helix maps for $n < m$. (However, being closed under \textit{directed} $n$-helix maps does imply being closed under directed $m$-helix maps for $m > n$.)
    \end{remark}

As stated at the beginning of this section, if (2) were false--i.e., there is a hereditary class (closed under weak embeddings) of graphs whose graph relation exhibits $\mathrm{SOP}_{r}$ that is closed under $n$-helix maps, where $n = \lceil r \rceil$ is the next integer after $r$--then the real-valued and integer-valued hierarchies would be distinct at the level of hereditary classes. Toward showing this conjectural claim, the goal of this section will be to prove that there is a hereditary class of graphs closed under weak embeddings, whose graph relation exhibits $\mathrm{SOP}_{r}$, that is closed under $n$-interval helix maps for all integers $n > r$. Informally speaking, we will have shown that, while interval helix maps are powerful enough for showing that $\mathfrak{o}(\mathcal{H})$ is an integer for hereditary classes defined by a finite family of forbidden weakly embedded substructures (as in Corollary \ref{anticlique helix integrality} to the proof of Theorem \ref{integerality}), and for showing that the integer-valued $\mathrm{NSOP}_{n}$ hierarchy extends to the real-valued $\mathrm{NSOP}_{r}$ hierarchy (as in Corollary \ref{interval helix restatement of NSOP_n} to the proof of Observation \ref{restatement of NSOP_n}), it is not powerful enough for showing that the hierarchies are the same, or that $\mathfrak{o}(\mathcal{H})$ is an integer, for general hereditary classes.

\begin{theorem}\label{interval helix non-integrality}
    Let $r > 3 $ be a real number that is not an integer. Then there is a hereditary class $\mathcal{H}$ of graphs with graph relation $R$ that is closed under weak embeddings, such that  $\{a_{i}\}_{i < \omega} \in \mathcal{H}$ where $a_{i} R a_{j}$ exactly when $i < j$, but $\{a_{\theta}\}_{\theta \in S^{1}} \notin \mathcal{H}$ where $a_{\theta} R a_{\psi}$ exactly when $\psi$ lies at most $\frac{2\pi}{r}$ radians counterclockwise from $\theta$, and such that $\mathcal{H}$ is closed under $n$-interval helix maps for any integer $n > r$.
\end{theorem}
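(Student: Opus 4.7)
Set $n^* := \lceil r \rceil$; since $r$ is not an integer, $n^* - 1 < r < n^*$. Choose positive integers $N, k$ with $n^* - 1 < N/k \leq r$, and let $C_{N,k}$ denote the directed $(N,k)$-cycle from Notation \ref{(N, k)-cycle}, which appears as the induced subgraph on $N$ evenly spaced points in the circle configuration $\{a_{\theta}\}_{\theta \in S^1}$. Following the outline in the introduction, the strategy is to introduce a combinatorial invariant, \emph{cyclic $n^*$-indecomposability} (to be made precise in Definition \ref{cyclically n-indecomposable}), whose intuition is that $G$ admits no partition into $n^*$ regions such that edges between regions form the pattern of a directed $n^*$-cycle. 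I then set $\mathcal{H} := \mathcal{H}(\mathcal{F})$ where $\mathcal{F}$ is a suitable family of cyclically $n^*$-indecomposable graphs (for example, the cyclically $n^*$-indecomposable graphs that weakly embed into $C_{N,k}$); this is automatically closed under weak embeddings in the sense of Definition \ref{defined by a (finite) family of forbidden weakly embedded substructures}.

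The preliminary verifications are: (i) $C_{N,k}$ is cyclically $n^*$-indecomposable, which follows from a direct counting argument modulo $n^*$ using $N/k < n^*$; (ii) every cyclically $n^*$-indecomposable graph contains a directed cycle (equivalently, acyclic graphs are cyclically $n^*$-decomposable), so that all weakly embedded substructures of the linear order $\{a_i\}_{i < \omega}$ lie outside $\mathcal{F}$, placing the linear order in $\mathcal{H}$; and (iii) the circle configuration is not in $\mathcal{H}$ because it contains $C_{N,k}$ as an induced subgraph.

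The central step is closure of $\mathcal{H}$ under $n$-interval helix maps for every integer $n > r$. Let $h : H \twoheadrightarrow G$ be such a map, associated to a directed cyclic $n$-decomposition $G = \bigsqcup_{i=0}^{n-1} G_i \sqcup D$ with each $G_i = \bigsqcup_{\alpha} S^{\alpha}_i$ a disjoint union of linearly ordered intervals, and suppose $F \hookrightarrow G$ is a weakly embedded cyclically $n^*$-indecomposable substructure. I will construct a weakly embedded cyclically $n^*$-indecomposable $F' \hookrightarrow H$. Write the vertices of $H$ as triples $(i, j, v)$ with $v \in G_i$ and $j \in \{0, \ldots, \ell - 1\}$ (together with copies of $D$), so that an edge $(i, j, v) \to (i+1 \bmod n, j', v')$ exists iff $v \to v'$ in $G$ and $j < j'$. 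The parallelogram $F'$ consists of lifts $(i(v), j(v), v)$ for $v \in F$, with height $j(v)$ an affine function (of slope of order $\lceil n / n^* \rceil$) of a horizontal coordinate derived from the interval structure on $F \cap G_i$. The condition $n > r$, equivalent to $n \geq n^*$ since $r$ is non-integer, supplies the slack needed for this affine slope: it ensures that the lifts land in $H$ and that the strict inequality $j(v) < j(w)$ along every edge of $F$ is satisfied.

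The main obstacle and most delicate step is verifying that $F'$ itself is cyclically $n^*$-indecomposable. The plan is to show that any hypothetical $n^*$-partition of $F'$ projects along the $j$-axis, using the linear interval structure within each $G_i$, to give an $n^*$-partition of $F$, contradicting the indecomposability of $F$. The interval helix condition enters essentially here: the linear order within each $S^{\alpha}_i$ provides a canonical horizontal coordinate making this projection well-defined and consistent with the partition constraints, a feature not available for general (directed) helix maps. The careful verification of this projection, together with checking that the parallelogram's slope calculation goes through uniformly for all integers $n > r$ (the tightest case being $n = n^*$), is where I expect the main technical burden of the proof to lie.
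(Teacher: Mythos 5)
Your plan identifies the right combinatorial invariant (cyclic $n^*$-indecomposability), the right choice of forbidden family \emph{in spirit}, and the right broad strategy (build a parallelogram inside the helix cover and show it is still indecomposable). The paper indeed takes $\mathcal{H}$ to be the class of graphs omitting cyclically $n^*$-indecomposable subgraphs, verifies the three preliminary points you list, and then shows closure under $n$-interval helix maps by producing an indecomposable ``parallelogram'' subset of the cover. But the details you propose diverge in places where they genuinely matter, and at least one of your concrete suggestions cannot work.

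First, the example you offer for $\mathcal{F}$ --- the cyclically $n^*$-indecomposable graphs weakly embedding into $C_{N,k}$ --- is a \emph{finite} family, since $C_{N,k}$ is finite. By Corollary \ref{anticlique helix integrality} (or already Theorem \ref{integerality}), a hereditary class defined by a finite family of forbidden weakly embedded substructures whose graph relation exhibits $\mathrm{SOP}_{r}$ must fail to be closed under some $k$-anticlique helix map with $k\geq n^*$; since anticlique helix maps are interval helix maps, this directly contradicts the conclusion you are trying to prove. The paper avoids this by taking $\mathcal{F}$ to be the class of \emph{all} cyclically $n^*$-indecomposable graphs, which is infinite, and this is not incidental.

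Second, the parallelogram you describe --- one lift $(i(v),j(v),v)$ per vertex $v\in F$, with height $j(v)$ an affine function of a horizontal coordinate inside each interval $S^{\alpha}_i$ --- cannot be right as stated. In an $n$-helix cover, two lifts of vertices from the \emph{same} region $G_i$ at \emph{different} heights $j\neq j'$ have no edge between them, even if the underlying vertices were $R$-related: edges only exist between $G_i^j$ and $G_{i+1\bmod n}^{j'}$ for $j<j'$, or within a single $G_i^j$. So an affine height assignment that moves along an interval destroys all the interval's internal edges in the lift, making your $F'$ much sparser than $F$, and the ``project along the $j$-axis to get a decomposition of $F$'' step no longer gives a valid cyclic decomposition of $F$ (since a constraint only holds along an \emph{edge}, and the edges have been deleted). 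The paper's $G^*$ is instead a ``fat'' parallelogram containing \emph{many} lifts of each vertex, built from a graph filtration (Claim \ref{graph filtration}) of $H$ starting at $D$ and adding a vertex together with its $R$-predecessors in its interval at each stage; the heights in each slanted side are governed by the filtration stage, not by an affine formula. The verification is then also a two-step argument: a pigeonhole argument finds a full monochromatic column $G^{j^*}\sqcup D$ inside the wide middle band (Claim \ref{monochromatic column}), and a separate inductive propagation argument, using the filtration together with two elementary properties of directed cyclic $n^*$-decompositions for $n^*>3$, extends the constant value outward to all of $G^*$. Your ``project to a decomposition of $F$'' idea is most closely related to that pigeonhole step, but you are missing the propagation step, which is where most of the technical work in the paper lies and where the interval-helix hypothesis is actually used.

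In short: the choice of invariant and the high-level architecture are correct, but you should take $\mathcal{F}$ to be \emph{all} cyclically $n^*$-indecomposable graphs (any finite family is ruled out by the paper's own Theorem \ref{integerality}), you need a many-lifts-per-vertex parallelogram built from a filtration rather than a single-lift affine skeleton, and the proof of indecomposability of the parallelogram needs the monochromatic-column-plus-propagation structure rather than a one-shot projection.
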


\begin{remark}\label{real-valued helix maps}
    We could have easily extended the definitions of $n$-helix maps, $n$-interval helix maps, etc. to non-integer values of $n$; then we expect the proof of Theorem \ref{anticlique helix integrality} to extend to these maps as well, yielding a hereditary class closed under $r'$-helix maps for real numbers $r' >r$. (We will point out a step of the proof that it is important to note extends to $r'$-helix maps below.) For ease of exposition, we will only explicitly consider $n$-interval helix maps for integers $n > r$, since the proof should straightforwardly extend to this more general case. But the extension to $r'$-helix maps for reals $r' > r$, informally speaking, will tell us that interval helices are not even powerful enough to show that $\mathfrak{o}(\mathcal{H})$ cannot take the value $r$, for $r$ some arbitrary real value greater than $3$. (It will therefore tell us, in a stronger sense than in the statement of Theorem \ref{interval helix non-integrality}, that interval helices are not enough to show that $\mathfrak{o}(\mathcal{H})$ is not an integer.)
\end{remark}

To prove this theorem, we will define a property of graphs that holds for subgraphs of $\{a_{\theta}\}_{\theta \in S^{1}}$--specifically, for $(N, k)$-cycles for $r \leq \frac{N}{k} < n^{*}$, where $n^{*}:= \lceil r \rceil$ is the next integer after $r$. We want the class defined by \textit{omitting} graphs with this property to be closed under $n$-interval helix maps for $n > r$, and to contain all cycle-free graphs.

\begin{definition}\label{cyclically n-indecomposable}
    Let $n \geq 3$ be an integer. A graph $G$ with $|G| \geq 2$ is \textit{cyclically $n$-indecomposable} if it has no full, directed cyclic $n$-decomposition $G = \sqcup^{n-1}_{i=0} G_{i}$ where at least two of the $G_{i}$ are nonempty.
\end{definition}

Choose any positive integers $(N, k)$ such that $r \leq \frac{N}{k} < n^{*}$. Let $G$ be the $(\textit{N, k})$-cycle graph as in Notation \ref{(N, k)-cycle} and the proof of Theorem \ref{integerality}: $G := \{b_{i}\}^{N -1}_{i =0} \notin \mathcal{H}$ where, for all $i, j$ such that $0 \leq i <N$ and $1 \leq j \leq k$,  $b_{i}R b_{i+j \mathrm{\: mod \:} N}$. We show:

\begin{lemma}\label{(N, k)-cycle cyclically n^{*}-indecomposable}
    The graph $G$ as above is cyclically $n^{*}$-indecomposable.
\end{lemma}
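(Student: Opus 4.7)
The plan is to suppose for contradiction that $G$ admits a full, directed cyclic $n^*$-decomposition $G = \bigsqcup_{j=0}^{n^*-1} G_j$ with at least two of the parts nonempty, and then derive a modular-arithmetic contradiction. First I will define $\ell \colon \mathbb{Z}/N \to \mathbb{Z}/n^*$ by $b_i \in G_{\ell(i)}$. Every edge of $G$ has the form $b_i \to b_{i+j \bmod N}$ for some $1 \leq j \leq k$; fullness and directedness force either both endpoints to lie in the same $G_p$, or the edge to go from $G_p$ to $G_{p+1 \bmod n^*}$ (never the reverse), so
\[
\ell(i+j \bmod N) - \ell(i) \equiv 0 \text{ or } 1 \pmod{n^*}.
\]
Setting $d_i \in \{0, 1\}$ so that $\ell(i + 1 \bmod N) \equiv \ell(i) + d_i \pmod{n^*}$ and $S_j(i) := d_i + d_{i+1} + \cdots + d_{i+j-1}$ (indices mod $N$), I obtain the constraint $S_j(i) \equiv 0 \text{ or } 1 \pmod{n^*}$ for every $i$ and every $1 \leq j \leq k$.

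The key step will be to strengthen this congruence to the integer bound $S_j(i) \in \{0, 1\}$, which I plan to prove by induction on $j$. The base case $j = 1$ is $d_i \in \{0, 1\}$. For the inductive step, $S_{j+1}(i) = S_j(i) + d_{i + j \bmod N}$ is a sum of two elements of $\{0, 1\}$, so it lies in $\{0, 1, 2\}$; the hypothesis that $r > 3$ is non-integer gives $n^* = \lceil r \rceil \geq 4$, so the value $2$ is congruent to neither $0$ nor $1$ modulo $n^*$, forcing $S_{j+1}(i) \in \{0, 1\}$.

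To close the argument, I will set $T = \sum_{i=0}^{N-1} d_i$. Each $d_t$ appears in exactly $k$ of the length-$k$ window sums $S_k(i)$, so
\[
k T = \sum_{i=0}^{N-1} S_k(i) \leq N,
\]
giving $T \leq N/k < n^*$ by our choice of $(N, k)$. On the other hand, the periodicity condition $\ell(N) = \ell(0)$ forces $T \equiv 0 \pmod{n^*}$. Combined with $0 \leq T < n^*$, this gives $T = 0$, so every $d_i$ is zero and $\ell$ is constant---contradicting the assumption that at least two of the $G_j$ are nonempty.

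I do not anticipate a serious obstacle. The two places where care is needed are (i) reading off the sharp constraint $S_j(i) \equiv 0, 1 \pmod{n^*}$---rather than a weaker one like $\{0, \pm 1\}$---from the \emph{directedness} of the cyclic decomposition, and (ii) making sure $n^* \geq 4$ under the theorem's hypotheses, since otherwise the value $2$ would be allowed modulo $n^*$ and the induction would collapse.
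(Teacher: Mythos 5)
Your proof is correct, and it rests on the same core encoding as the paper's: both introduce the map sending $b_i$ to the index of its block (your $\ell$, the paper's $f$), define the increment sequence $d_i \in \{0,1\}$ (the paper's $\delta_i$), and use the $(N,k)$-cycle edges to conclude that any $k$ consecutive increments contain at most one $1$. Where you diverge is in the finishing move. The paper normalizes (WLOG $\delta_0 = 1$, $f(b_0) = 0$), bounds the \emph{linear} partial sum $\sum_{i=0}^{N-k}\delta_i < n^*$, and then extracts a contradiction from the two specific wrap-around edges $b_{N-k+1} R b_0$ and $b_{N-k+1} R b_1$. You instead perform a uniform double count over all $N$ cyclic windows, $kT = \sum_i S_k(i) \le N$, and close with the global periodicity observation that going once around the $N$-cycle forces $T \equiv 0 \pmod{n^*}$, so $0 \le T < n^*$ gives $T = 0$. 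This is arguably cleaner: it is symmetric (no base-point normalization), it treats the wrap-around windows the same as the interior ones, and it replaces the paper's final case analysis with a one-line congruence. One small remark: the inductive step for $S_{j+1}(i) \in \{0,1\}$ only requires $2 \not\equiv 0,1 \pmod{n^*}$, i.e., $n^* \ge 3$; the bound $n^* \ge 4$ you cite does hold under the theorem's hypotheses, but it is stronger than what that step needs.
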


\begin{proof}
 Suppose otherwise, and let $G = \sqcup^{n^{*}-1}_{i = 0} G_{i}$ be a full, directed cyclic $n^{*}$-decomposition of $G$. Let $f: G \to \{0, \ldots n^{*}-1\}$ send $v \in G_{i}$ to $i$. Because $G = \sqcup^{n^{*}-1}_{i = 0} G_{i}$ is a directed cyclic $n$-decomposition, and $b_{i} R b_{i+1} \mathrm{\:mod\:} n^{*}$ for $0 \leq i \leq n-1$, either

 (a) $f(b_{i+1 \mathrm{\: mod \:} n^{*}})-f(b_{i})= 1 \mathrm{\: mod \:} n$, or

 (b) $f(b_{i+1 \mathrm{\: mod \:} n^{*}}) = f(b_{i})$.

For $1 \leq i \leq N-1$, define $\delta_{i}$ to be equal to $1$ if (a) holds, and equal to $0$ if (b) holds. We claim that, for $N' < N-1$, $\sum^{N'}_{i=0} \delta_{i} \leq \frac{N'}{k} +1$: if $\delta_{i} = 1$, then because $G = \sqcup^{n^{*}-1}_{i = 0} G_{i}$ is a directed cyclic $n$-decomposition and $b_{i}Rb_{j+1}$, $b_{i+1} R b_{j+1}$ for each $j < N-1$ with $i < j < i+k$, for each such $j$, $\delta_{j} = 0$. So, enumerating the values $i \leq N'$ such that $\delta_{i} = 1$ as $i_{0} < \ldots < i_{m} \leq N'$, $ mk \leq i_{m} \leq N'$, so $m \leq \frac{N'}{k}$ and $\sum^{N'}_{i=0} \delta_{i} \leq \frac{N'}{k} +1$. Since more than one of the $G_{i}$ are nonempty, we may additionally assume, by symmetry, that $\delta_{0} = 1$, and that $f(b_{0}) = 0$. Now set $N'= N-k$. Then $\sum^{N-k}_{i=0} \delta_{i} \leq \frac{N-k}{k} +1 = \frac{N}{k} < n^{*}$, so $f(b_{1}) = 1$ while $1 \leq f(b_{N-k+1}) \leq n^{*} -1$. But because $G = \sqcup^{n^{*}-1}_{i = 0} G_{i}$ is a directed cyclic $n$-decomposition, $1 \leq f(b_{N-k+1}) \leq n^{*} -1$, $f(b_{0}) = 0$ and $b_{N-k+1}Rb_{0}$ imply that $f(b_{N-k+1}) = n^{*} -1 $, while $b_{N-k+1}Rb_{1}$ and $f(b_{1}) = 1$ imply that $f(b_{N-k+1})\neq n^{*} -1 $, a contradiction.

\end{proof}

To show that the class defined by omitting cyclically $n^{*}$-indecomposable graphs contains all cycle-free graphs, we prove the following short lemma:

\begin{lemma}\label{cyclically n-indecomposable graphs have cycles}
    Any graph that is cyclically $n$-decomposable for an integer $n \geq 3$ contains a cycle.
\end{lemma}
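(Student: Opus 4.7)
The plan is to prove the contrapositive: every finite cycle-free directed graph $G$ with $|G| \geq 2$ admits a full, directed cyclic $n$-decomposition in which at least two of the parts $G_{i}$ are nonempty, and therefore fails to be cyclically $n$-indecomposable. (Note that the statement is only nontrivial when $|G| \geq 2$, since cyclic $n$-indecomposability is only defined in that range.)

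The key input is the elementary fact that any finite cycle-free directed graph has a \emph{sink}, i.e.\ a vertex $v$ with no outgoing edges; starting from any vertex and repeatedly following outgoing edges must terminate, since otherwise finiteness would force a repetition and hence a directed cycle. Having fixed such a sink $v$, I would set $G_{1} := \{v\}$, $G_{0} := G \setminus \{v\}$, and $G_{i} := \emptyset$ for $2 \leq i \leq n-1$. Since $|G| \geq 2$, both $G_{0}$ and $G_{1}$ are nonempty, so the decomposition is nontrivial.

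It then remains to check that this really is a full, directed cyclic $n$-decomposition in the sense of Definition \ref{cyclic n-decomposition}. Fullness is immediate because the $G_{i}$ cover $G$. For the cyclic decomposition condition, every edge of $G$ has both endpoints in the single allowed set $G_{0} \cup G_{1}$, which equals all of $G$; in particular, the unused parts $G_{2}, \ldots, G_{n-1}$ contribute no obstruction. For directedness, the edges strictly between $G_{0}$ and $G_{1}$ are exactly those incident to $v$, and since $v$ is a sink they all point into $v$, i.e.\ from $G_{0}$ to $G_{1} = G_{0+1 \bmod n}$, as required; edges internal to $G_{0}$ impose no directedness requirement.

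There is no real obstacle here: once the sink-vs-rest partition is identified, the verification is immediate. The only mild subtlety worth highlighting is that Definition \ref{cyclically n-indecomposable} permits edges to lie entirely inside a single $G_{i}$, which is precisely what allows the lone sink $v$ to absorb all the ``directional'' content between parts while the internal structure of the cycle-free graph $G \setminus \{v\}$ is safely parked inside $G_{0}$.
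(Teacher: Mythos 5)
Your proof is correct and takes essentially the same approach as the paper's one-line proof: the paper picks a source vertex $v$ (no $v'$ with $v' R v$) and uses the decomposition $G_0 = \{v\}$, $G_1 = H \setminus \{v\}$, while you pick a sink and place it in $G_1$ with everything else in $G_0$, which is the dual variant of the same construction and verifies directedness for the same reason. You also correctly interpreted the lemma as asserting that cyclically $n$-\emph{in}decomposable graphs contain cycles, as the label and the subsequent application in the proof of Theorem \ref{interval helix non-integrality} confirm (the word ``indecomposable'' appears to have been misprinted as ``decomposable'' in the statement).
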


\begin{proof}
    Any finite graph $H$ without a cycle has a vertex $v$ such that there is no $v' \in H$ with $v'Rv$. Then $H =\{v\} \sqcup (H \backslash \{v\})$ gives a cyclic $n$-decomposition of $H$.
\end{proof}

We are now ready to prove Theorem \ref{interval helix non-integrality}.

\begin{proof}
    Let $\mathcal{H}$ be the class of graphs that omit all cyclically $n^{*}$-indecomposable graphs.  By the definition of being cyclically $n^{*}$-indecomposable, $\mathcal{H}$ is a class of graphs closed under weak embeddings; we show that $\mathcal{H}$ is as desired. By Lemma \ref{(N, k)-cycle cyclically n^{*}-indecomposable} and the proof of Lemma \ref{finitary and infinitary conditions}, $\{a_{\theta}\}_{\theta \in S^{1}} \notin \mathcal{H}$ where $a_{\theta} R a_{\psi}$ exactly when $\psi$ lies at most $\frac{2\pi}{r}$ radians counterclockwise from $\theta$. However, by Lemma \ref{cyclically n-indecomposable graphs have cycles}, $\{a_{i}\}_{i < \omega} \in \mathcal{H}$ where $a_{i} R a_{j}$ exactly when $i < j$, because $\{a_{i}\}_{i < \omega}$ has no cycles. It remains to show that $\mathcal{H}$ is closed under $n$-interval helix maps when $n > r$ is an integer. For this, it suffices to show that, if $H$ contains a cyclically $n^{*}$-indecomposable subgraph and $h: \tilde{H} \twoheadrightarrow H$ is an $n$-interval helix map, then $\tilde{H}$ contains a cyclically $n^{*}$-indecomposable subgraph. For $H' \subseteq H$ a cyclically $n^{*}$-indecomposable subgraph, $h|_{h^{-1}(H')}: h^{-1}(H') \twoheadrightarrow H'$ is also an $n$-interval helix map, so, replacing $H$ with $H'$, we can assume $H$ is cyclically $n^{*}$-indecomposable.

Suppose $h: \tilde{H} \to H$ is the $n$-interval helix map associated with the directed cyclic $n$-decomposition $H= \bigsqcup^{n-1}_{i =0} G_{i} \sqcup D$, for $G_{i}=\sqcup_{j < n_{i}} S^{j}_{i}$ disjoint unions of sets $S^{j}_{i}$ that are linearly ordered by $R$, where there are no edges between vertices of $S^{j}_{i}$ and $S^{j'}_{i}$ when $j \neq j'$. It suffices to show that, for sufficiently large $\ell$, and $h^{\ell}: \tilde{H}^{\ell} \to H$ the $n$-interval helix map of length $\ell$ associated with this cylcic $n$-decomposition, $\tilde{H}^{\ell}$ contains a cyclcially $n^{*}$-indecomposable subgraph. In the degenerate case where $D$ is empty and only one of the $G_{i}$ for $i < n$ is nonempty, we are done; otherwise, we prove the following preliminary claim, which we will use later:

\begin{claim}\label{nonempty base}
    $D$ is nonempty.
\end{claim}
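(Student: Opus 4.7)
I would argue by contradiction: suppose $D = \emptyset$. Then $H = \bigsqcup_{i=0}^{n-1} G_i$ is a \emph{full} directed cyclic $n$-decomposition, and since we are not in the degenerate case, at least two of the $G_i$ are nonempty, say $G_{i_0}$ and $G_{i_1}$ with $0 \leq i_0 < i_1 \leq n-1$. The key quantitative input is that $n \geq n^{*}$: this holds because $n > r$ is an integer, $r$ is not an integer, and $n^{*} = \lceil r \rceil$ is the smallest integer exceeding $r$. The strategy is to coarsen the full directed cyclic $n$-decomposition into a full directed cyclic $n^{*}$-decomposition of $H$ with at least two nonempty parts, contradicting the cyclic $n^{*}$-indecomposability of $H$ that was arranged at the start of the proof of Theorem \ref{interval helix non-integrality}.

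To produce the coarsening, I would build a ``winding'' map $\phi: \{0, \ldots, n-1\} \to \{0, \ldots, n^{*}-1\}$ such that $\phi(i+1 \mathrm{\: mod \:} n) - \phi(i) \in \{0,1\} \mathrm{\: mod \:} n^{*}$ for every $i$. Such a $\phi$ is specified by distributing $n^{*}$ ``jumps'' among the $n$ arcs of the cycle $\{0, \ldots, n-1\}$, which is possible exactly because $n \geq n^{*}$; because the total number of jumps around the cycle is $n^{*} \equiv 0 \mathrm{\: mod \:} n^{*}$, the map $\phi$ is consistently defined. Setting $G'_j := \bigsqcup_{\phi(i) = j} G_i$ then yields a full directed cyclic $n^{*}$-decomposition of $H$: every edge of $H$ lies in some pair $G_i G_{i+1 \mathrm{\: mod \:} n}$, and under the grouping this pair falls either into a single $G'_j$ (no jump at $i$) or into a consecutive pair $G'_{j} G'_{j+1 \mathrm{\: mod \:} n^{*}}$ (jump at $i$), with edge directions preserved because the original decomposition is directed.

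The remaining task is to choose the jump positions so that $\phi(i_0) \neq \phi(i_1)$, which guarantees two distinct nonempty $G'_j$'s and completes the contradiction. Setting $a = i_1 - i_0$ and letting $m$ denote the number of jumps placed in the arc $[i_0, i_1-1]$, the constraints $0 \leq m \leq a$ and $0 \leq n^{*} - m \leq n - a$ cut out the admissible range $[\max(0, n^{*}-n+a),\; \min(a, n^{*})]$ for $m$, and one needs $m \in \{1, \ldots, n^{*}-1\}$ so that $\phi(i_1) - \phi(i_0) \equiv m \not\equiv 0 \mathrm{\: mod \:} n^{*}$. A routine case split on whether $a < n^{*}$, $a = n^{*}$, or $a > n^{*}$, using only $1 \leq a \leq n-1$ and $n \geq n^{*}$, shows that this admissible range always meets $\{1, \ldots, n^{*}-1\}$. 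This small arithmetic verification is the only step requiring actual computation and is the main (minor) obstacle; the rest of the argument is combinatorial bookkeeping on how the coarsening interacts with the directed edge structure.
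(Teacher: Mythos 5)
Your proposal is correct and uses the same core idea as the paper: coarsen the full directed cyclic $n$-decomposition $H = \bigsqcup_{i=0}^{n-1} G_i$ into a full directed cyclic $n^{*}$-decomposition with at least two nonempty parts, contradicting cyclic $n^{*}$-indecomposability, using $n \geq n^{*}$. However, the paper's execution is substantially simpler than your general winding map: it cyclically relabels so that $G_0$ is nonempty, then sets $G'_i := G_i$ for $0 \leq i \leq n^{*}-2$ and $G'_{n^{*}-1} := \bigsqcup_{i = n^{*}-1}^{n-1} G_i$ (i.e., merges the tail block into a single part), and observes that any second nonempty $G_{i_1}$ with $i_1 \neq 0$ lands in some $G'_j$ with $j \neq 0$, so two of the $G'_j$ are automatically nonempty with no arithmetic case analysis needed. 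Your more flexible $\phi$ with $n^{*}$ jumps distributed around the cycle is a valid generalization, and the case split you sketch does close the argument, but the paper gets the same conclusion by fixing $\phi(i) = \min(i, n^{*}-1)$ after the relabeling, which sidesteps the verification that $\phi(i_0) \neq \phi(i_1)$ can always be arranged.
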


\begin{proof}
    Otherwise, $H= \bigsqcup^{n-1}_{i =0} G_{i}$; we may assume $G_{0}$ is nonempty. But then, since $H= \bigsqcup^{n-1}_{i =0} G_{i}$ is a (full) directed cyclic $n$-decomposition of $H$, $H= \bigsqcup^{n^{*}-1}_{i =0} G'_{i}$, with $G'_{i} = G_{i}$ for $0 \leq i \leq n^{*}-2$ and $G'_{n^{*}-1}= \bigsqcup^{n}_{i=n^{*}-1} G_{i}$, is a full directed cyclic $n^{*}$-decomposition of $H$, where at least two of the $G'_{i}$ are nonempty. This contradicts that $H$ is cyclically $n^{*}$-indecomposable.
\end{proof}

Let $\ell_{1}= |H \backslash D|$, and choose $\ell_{2} $ to be larger than the number of partitions of $H$ into $n^{*}$ many sets; then let $\ell$ be equal to $\ell_{2} + 2\ell_{1}$. Writing $\tilde{H}^{\ell-1} = \bigsqcup^{\ell}_{j =0}\bigsqcup^{n-1}_{i = 0} G^{j}_{i} \sqcup D$ with notation as in the proof of Fact \ref{closure under helix maps}, for $j \leq \ell$ define $G^{j} := \bigsqcup^{n-1}_{i = 0} G^{j}_{i}$. Then let $I_{1}$ be the interval of integers $[0, \ell_{1} -1]$ of size $\ell_{1}$, let $I_{2}$ be the interval of integers $[\ell_{1}, \ell_{1} + \ell_{2} -1]$ of size $\ell_{2}$, and let $I_{3}$ be the interval of integers $[\ell_{1}+\ell_{2}, \ell-1]$ of size $\ell_{1}$. For $i = 1, 2, 3$ let $G^{I_{i}} : = \bigsqcup_{j \in I_{i}} G^{j}$.

For $v \in H \backslash D$, let $S(v)$ be the unique linearly ordered set $S^{j}_{i}$ (for $i < n$, $j < n_{i}$) such that $v \in S^{j}_{i}$ We show the following claim:

\begin{claim}\label{graph filtration}
There exists a filtration $D = K_{0} \subsetneq \ldots \subsetneq K_{s} = H$ of $H$ for $0 \leq i \leq s$, and $v_{i} \in K_{i}$ for $1 \leq i \leq s$, such that for $1 \leq i \leq s$,

(1) $K_{i}=K_{i-1}\cup\{v_{i}\} \cup \{v\in S(v_{i}): vRv_{i}\}$, and

(2) there exists some $v \in K_{i-1}$ such that $v_{i} R v$.

\end{claim}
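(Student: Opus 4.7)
The plan is to build the filtration greedily. I would set $K_0 := D$ and then, at each step $i \geq 1$ for which $K_{i-1} \subsetneq H$, choose a vertex $v_i \in H \setminus K_{i-1}$ satisfying condition $(2)$—that is, some edge $v_i R v$ with $v \in K_{i-1}$—and then define $K_i := K_{i-1} \cup \{v_i\} \cup \{v \in S(v_i) : v R v_i\}$ so as to force $(1)$. Because $v_i \notin K_{i-1} \supseteq D$, the vertex $v_i$ lies in some $G_{i'}$, so $S(v_i)$ is well defined. The inclusion $K_{i-1} \subsetneq K_i$ is automatic from $v_i \in K_i \setminus K_{i-1}$, so $|K_i|$ strictly increases at each step and the process terminates with $K_s = H$ after at most $|H \setminus D|$ steps.

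The main obstacle, and the only nontrivial step, is showing that such a $v_i$ always exists while $K_{i-1} \subsetneq H$; this is where the cyclic $n^{*}$-indecomposability of $H$ (the standing assumption on $H$ made just before Claim \ref{nonempty base}) enters. I would argue by contradiction: suppose $K := K_{i-1}$ satisfies $D \subseteq K \subsetneq H$, yet no vertex of $L := H \setminus K$ has an outgoing $R$-edge into $K$. Then I would consider the partition $H = H_0 \sqcup H_1 \sqcup H_2 \sqcup \cdots \sqcup H_{n^{*}-1}$ defined by $H_0 := K$, $H_1 := L$, and $H_j := \emptyset$ for $2 \leq j \leq n^{*} - 1$ (recall $n^{*} \geq 4$ since $r > 3$ is non-integer, so this partition makes sense). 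Every edge of $H$ has both endpoints in $H_0 \cup H_1$: edges within $K$ or within $L$ trivially, and edges between $K$ and $L$ go only from $H_0 = K$ to $H_1 = L$ by hypothesis—which is moreover the correct direction from $H_0$ to $H_1$ for a directed cyclic decomposition. All other consecutive pairs $H_j \cup H_{j+1 \bmod n^{*}}$ involve an empty set and so carry no edges to check. Hence this partition is a full, directed cyclic $n^{*}$-decomposition of $H$ having at least two nonempty parts—$H_0 \supseteq D \neq \emptyset$ by Claim \ref{nonempty base}, and $H_1 \neq \emptyset$ since $K \subsetneq H$—contradicting the cyclic $n^{*}$-indecomposability of $H$. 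This completes the existence step, and hence the claim.
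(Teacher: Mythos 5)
Your proof is correct and follows essentially the same route as the paper: a greedy construction starting from $K_0 = D$, with the existence of $v_i$ at each stage obtained by contradiction by forming the full directed cyclic $n^*$-decomposition $H = K \sqcup (H\setminus K)$ (padded with $n^*-2$ empty parts, a detail the paper leaves implicit) and invoking Claim \ref{nonempty base} together with cyclic $n^*$-indecomposability. The only cosmetic difference is that you spell out the padding and the well-definedness of $S(v_i)$ explicitly, which the paper takes for granted.
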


\begin{proof}(of claim)

Suppose by induction we have found  $D = K_{0} \subsetneq \ldots \subsetneq K_{i} \subsetneq H$ satisfying (1) and (2). Then there must be some $v_{i+1} \in H \backslash K_{i}$, $v \in K_{i}$ such that $v_{i+1} R v$; otherwise, the only edges between any $w \in H \backslash K_{i}$ and $w' \in K_{i}$ go in the direction from $w'$ to $w$, and we get a full directed cyclic $n^{*}$-decomposition $H = K_{i} \sqcup H \backslash K_{i}$. But $H \backslash K_{i}$ is nonempty by assumption, and $K_{i} \supseteq D$ is nonempty by \ref{nonempty base}, contradicting cyclic $n^{*}$-indecomposability of $H$.  So $v_{i+1}$, $K_{i+1}:=K_{i}\cup\{v_{i}\} \cup \{v\in S(v_{i}): vRv_{i}\}$ are as desired, and continuing in this way we find $D = K_{0} \subsetneq \ldots \subsetneq K_{s} = H$ as desired.

\end{proof}

Similarly, we can find a filtration $D = K_{0} \subsetneq \ldots \subsetneq K_{-t} = H$  with $v_{-i} \in K_{-i}$ for $0 \leq i \leq t$, such that for $1 \leq i \leq t$, 

(1) $K_{-i}=K_{-(i-1)}\cup\{v_{-i}\} \cup \{v\in S(v_{-i}): v_{-i}R v \}$, and

(2) there exists some $v \in K_{-(i-1)}$ such that $vRv_{-i}$.

We now find a cyclically $n^{*}$-indecomposable subset of $\tilde{H}^{\ell}= G^{I_{1}} \sqcup G^{I_{2}} \sqcup G^{I_{3}} \sqcup D$. Noting that for $s$, $t$ as in the claim, $s, t \leq \ell_{1}$, define $G_{a} \subset G^{I_{1}}$, $G_{b} \subset G^{I_{2}}$, $G_{c} \subset G^{I_{3}}$ as follows:

$$G^{a} := \bigsqcup^{t}_{i=1} (G^{i-1}\cap (h^{\ell})^{-1}(K_{-i})) \sqcup \bigsqcup^{\ell_{1}}_{i=t+1}  G^{i}$$

$$G^{b} : = G^{I_{2}}$$

$$G^{c} := \bigsqcup^{s}_{i=1} (G^{\ell-i}\cap (h^{\ell})^{-1}(K_{i})) \sqcup \bigsqcup^{\ell_{1}}_{i=s+1}  G^{\ell - i} $$

Define $G^{*} = G^{a} \sqcup G^{b} \sqcup G^{c}\sqcup D$. The resulting set resembles a parallelogram as in Figure \ref{parallelogram} below.

\begin{figure}[hbt!]

{%
\begin{circuitikz}
\tikzstyle{every node}=[font=\normalsize]
\draw [dashed] (2.5,11) -- (11.75,11);
\draw [dashed] (2.5,8) -- (11.75,8);
\draw [dashed] (2.5,11) -- (2.5,8);
\draw [dashed] (11.75,11) -- (11.75,8);
\draw (2.5,8) to[short] (5.5,11);
\draw (5.5,11) to[short] (11.75,11);
\draw (11.75,11) to[short] (8.75,8);
\draw (2.5,8) to[short] (8.75,8);
\draw [dashed] (5.5,11) -- (5.5,8);
\draw [dashed] (8.75,8) -- (8.75,11);
\node [font=\normalsize] at (4,7.25) {$I_{1}$};
\node [font=\normalsize] at (7,7.25) {$I_{2}$};
\node [font=\normalsize] at (10.25,7.25) {$I_3$};
\node [font=\normalsize] at (4.25,9.25) {$G^{a}$};
\node [font=\normalsize] at (7,9.25) {$G^b$};
\node [font=\normalsize] at (9.5,9.25) {$G^c$};
\end{circuitikz}
}%

\caption{Visual representation of the set $G^{*}\backslash D$ within $\tilde{H}^{\ell}$.}

\label{parallelogram}
\end{figure}

We show that $G^{*} \subseteq \tilde{H}^{\ell}$ is cyclically $n^{*}$-indecomposable. Suppose $G^{*}=:\bigsqcup^{n^{*}-1}_{i =0} (G^{*})^{i}$ is a full directed cyclic $n^{*}$-decomposition of $G^{*}$; as in the proof of Lemma \ref{(N, k)-cycle cyclically n^{*}-indecomposable}, define $f: G^{*} \to \{0, \ldots, n^{*}-1\}$ so that for $v \in G^{*}$, $f(v) =i$ when $v \in (G^{*})^{i}$. We show that $f$ is constant, a contradiction.

We prove the following claim:

\begin{claim}\label{monochromatic column}

There is $j^{*} \in I_{2}$ such that $f$ is constant on $G^{j^{*}} \sqcup D$.

\end{claim}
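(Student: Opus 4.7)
The plan is a pigeonhole argument on the slices indexed by $I_{2}$. First, I would note that for each $j \in I_{2}$, the set-bijection $G^{j} \sqcup D \to H$ given by $g^{j}_{i}(k) \mapsto g_{i}(k)$ and the identity on $D$ (as in Definition \ref{helix map}) identifies the $j$th slice with the vertex set of $H$, so the restriction $f|_{G^{j} \sqcup D}$ pulls back to a coloring $\pi_{j} \colon H \to \{0, \ldots, n^{*}-1\}$. Because $\ell_{2} = |I_{2}|$ was chosen larger than the number of partitions of $H$ into $n^{*}$ many sets, pigeonhole would produce indices $j_{1} < j_{2}$ in $I_{2}$ with $\pi_{j_{1}} = \pi_{j_{2}} =: g$ as labeled functions (at worst, enlarging $\ell_{2}$ by a factor of $(n^{*})!$ to pass from unordered to ordered partitions).

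The heart of the argument will be to show that this common coloring $g$ in fact defines a full, directed cyclic $n^{*}$-decomposition of $H$, so that cyclic $n^{*}$-indecomposability of $H$ will force $g$ to be constant. By Definition \ref{helix map} (1), the restriction of the bijection $G^{j_{1}} \sqcup D \to H$ to $G^{j_{1}}_{i} \sqcup D$ is a graph isomorphism onto $G_{i} \sqcup D$ for each $i$, so the cyclic $n^{*}$-decomposition condition for $f|_{G^{j_{1}} \sqcup D}$ transfers immediately to the required condition on $g$ for every edge of $H$ lying within $D$, within a single $G_{i}$, or between some $G_{i}$ and $D$. The remaining edges are the ``cross-class'' edges in $H$ from $v \in G_{i}$ to $v' \in G_{i+1 \mathrm{\:mod\:} n}$, including the wrap-around case $i = n-1$, $i+1 \mathrm{\:mod\:} n = 0$. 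For these I would invoke Definition \ref{helix map} (1) for the pair $(j_{1}, j_{2})$ with $j_{1} < j_{2}$: it produces a corresponding edge of $\tilde{H}^{\ell}$ from the lift of $v$ in $G^{j_{1}}_{i}$ to the lift of $v'$ in $G^{j_{2}}_{i+1 \mathrm{\:mod\:} n}$. Since $j_{1}, j_{2} \in I_{2}$, this edge lies inside $G^{*} \supseteq G^{I_{2}}$, and because $f$ is a directed cyclic $n^{*}$-decomposition of $G^{*}$, evaluating $f$ at the two endpoints gives $g(v') - g(v) \in \{0, 1\} \mathrm{\:mod\:} n^{*}$, which is exactly the required condition on $g$.

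Granted this, the conclusion will be immediate: $g$ is a full directed cyclic $n^{*}$-decomposition of $H$, so by cyclic $n^{*}$-indecomposability all but one color class is empty and $g$ is constant. Thus $f$ is constant on $G^{j_{1}} \sqcup D$, and one takes $j^{*} = j_{1}$. The one subtlety I expect to have to verify is that the wrap-around edge of $H$ between $G_{n-1}$ and $G_{0}$ lifts to a helix-cover edge in the correct direction: this works precisely because $j_{1} < j_{2}$ and $0 = (n-1) + 1 \mathrm{\:mod\:} n$ exactly satisfy the indexing convention of Definition \ref{helix map} (1), so the same argument handles all cross-class edges uniformly and no essential obstacle arises beyond correctly bookkeeping the two chosen indices against the cyclic index $i$.
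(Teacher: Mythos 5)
Your proof is correct and takes essentially the same approach as the paper: pigeonhole on the slices $\{G^{j} \sqcup D\}_{j \in I_{2}}$ to find two with the same pullback coloring, then show that this common coloring would give a full directed cyclic $n^{*}$-decomposition of $H$ (using the graph isomorphism of single slices for within-component edges and the $j_{1} < j_{2}$ ordering for cross-class edges), contradicting cyclic $n^{*}$-indecomposability of $H$ unless the coloring is constant. Your care about counting labeled colorings versus unordered partitions and about the wrap-around edge is sound bookkeeping that matches the paper's intent.
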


\begin{proof}(of claim)

    For each $j \in I_{2}$, $h_{j}^{\ell}:=h^{\ell}|_{G^{j} \sqcup D}: G^{j} \sqcup D \to H $ is a bijection. Since we ensured that $|I_{2}|$ is larger than the number of partitions of $H$ into $n^{*}$ sets, there must then, by the pigeonhole principle, be $j < j'$ in $I_{2}$ such that $f\circ (h^{\ell}_{j})^{-1}= f\circ (h^{\ell}_{j'})^{-1}$. Define $\tilde{f}:= f\circ (h^{\ell}_{j})^{-1}= f\circ (h^{\ell}_{j'})^{-1}$; we show $f$ must be constant. Suppose otherwise; form the partition $H := \bigsqcup^{n^{*}-1}_{i =0} F_{i}$ where $F_{i} = f^{-1}(i)$. Then at least two of the $F_{i}$ are nonempty because $f$ is nonconstant; we show $H := \bigsqcup^{n^{*}-1}_{i =0} F_{i}$ is a directed cyclic $n^{*}$-decomposition of $H$, which will then be full, obtaining a contradiction to cyclic $n^{*}$-indecomposability of $H$. Let $v, v' \in H$ satisfy $vRv'$; we show that either $\tilde{f}(v') = \tilde{f}(v)$ or $\tilde{f}(v') = \tilde{f}(v) + 1 \mathrm{\:mod\:} n^{*}$, as desired. For $H = \bigsqcup^{n}_{i=0} G_{i} \sqcup D$ the original cyclic $n$-decomposition of $H$ from which the helix map $h^{\ell}$ is obtained, there are two possibilities, by definition of a directed cyclic $n$-decomposition: either $v, v'$ belong to a single set of the form $G_{i} \sqcup D$, or $v \in G_{i}$ and $v' \in G_{i+1 \mathrm{\: mod \:} n}$. By construction of the helix map, in the first case, $(h^{\ell}_{j})^{-1}(v)R(h^{\ell}_{j})^{-1}(v')$, so either $f((h^{\ell}_{j})^{-1}(v'))=f((h^{\ell}_{j})^{-1}(v))$, in which case $\tilde{f}(v') = \tilde{f}(v)$, or $f((h^{\ell}_{j})^{-1}(v'))=f((h^{\ell}_{j})^{-1}(v))+1 \mathrm{\: mod \:} n^{*}$, in which case $\tilde{f}(v') = \tilde{f}(v) + 1 \mathrm{\:mod\:} n^{*}$. In the second case, from the construction of the helix map and that $j < j'$, we see that $(h_{j}^{\ell})^{-1}(v) R (h_{j'}^{\ell})^{-1}(v')$. So either $f((h^{\ell}_{j'})^{-1}(v'))=f((h^{\ell}_{j})^{-1}(v))$, in which case $\tilde{f}(v') = \tilde{f}(v)$, or $f((h^{\ell}_{j'})^{-1}(v'))=f((h^{\ell}_{j})^{-1}(v))+1 \mathrm{\: mod \:} n^{*}$, in which case $\tilde{f}(v') = \tilde{f}(v) + 1 \mathrm{\:mod\:} n^{*}$.
\end{proof}

Let $c^{*}$ be the constant value of $f$ on $G^{j^{*}} \sqcup D$. We show that $f$ takes the value $c^{*}$ on all of $G^{*}$. We show this on $G_{+}^{*}:= G^{*} \cap( \bigsqcup^{\ell-1}_{i = j^{*}} G^{j} \sqcup D)$, thereby showing the same for $G_{-}^{*}:= G^{*} \cap (\bigsqcup^{j^{*}}_{i = 0} G^{j} \sqcup D)$ by symmetry. Let $D = K_{0} \subsetneq \ldots \subsetneq K_{s} = H$ for $0 \leq i \leq s$, and $v_{i} \in K_{i}$ for $1 \leq i \leq s$, be as in Claim \ref{graph filtration}.  We know that $f$ takes the value $c^{*}$ on all of $K_{0} = D$; let us show, by induction on $1 \leq i \leq s$, that $f$ takes the value $c^{*}$ on all of $G^{*}_{+} \cap h^{-1}_{\ell}(K_{i} \backslash K_{i-1})$, which will suffice because $D = K_{0} \subset \ldots \subset K_{s} = H$. Suppose, by the induction hypothesis, that $f$ takes the value $c^{*}$ on all of $G^{*}_{+} \cap h^{-1}_{\ell}(K_{i-1})$; we show it takes the value $c^{*}$ on all of $G^{*}_{+} \cap h^{-1}_{\ell}(K_{i} \backslash K_{i-1})$. A vertex of $G^{*}_{+} \cap h^{-1}_{\ell}(K_{i} \backslash K_{i-1})$ will belong to $G^{j} \cap h^{-1}_{\ell}(K_{i} \backslash K_{i-1})$ for some $j$ with $j^{*} \leq j \leq \ell-i$, and we may assume $j^{*} < j$ since $f$ is already known to take value $c^{*}$ on all of $G^{j^{*}}$; it will then be enough to show that that $f$ takes the value $c^{*}$ on all of $G^{j} \cap h^{-1}_{\ell}(K_{i} \backslash K_{i-1})$. By condition (1) of Claim \ref{graph filtration}, $K_{i} \backslash K_{i-1} \subseteq \{v_{i}\} \cup \{v\in S(v_{i}): vRv_{i}\}$ and $\{v_{i}\} \cup \{v\in S(v_{i}): vRv_{i}\} \subset K_{i}$, so $G^{j} \cap h^{-1}_{\ell}(K_{i} \backslash K_{i-1}) \subseteq G^{j} \cap (h^{\ell})^{-1}(\{v_{i}\} \cup \{v\in S(v_{i}): vRv_{i}\}) \subseteq G^{*}_{+}$; thus it suffices to show that $f$ takes the value $c^{*}$ on $G^{j} \cap (h^{\ell})^{-1}(\{v_{i}\} \cup \{v\in S(v_{i}): vRv_{i}\})$, which will be equal to $G_{k_{i}}^{j} \cap (h^{\ell})^{-1}(\{v_{i}\} \cup \{v\in S(v_{i}): vRv_{i}\})$ where $\{v_{i}\} \cup \{v\in S(v_{i}): vRv_{i}\} \subseteq G_{k_{i}}$. By construction of the helix map, there is a unique vertex $\tilde{v}_{i} \in G_{k_{i}}^{j} \cap (h^{\ell})^{-1}(\{v_{i}\} \cup \{v\in S(v_{i}): vRv_{i}\})$ such that $h^{\ell}(\tilde{v}_{i}) = v_{i}$. By (2) of Claim \ref{graph filtration}, there is some $v \in K_{i-1}$ such that $v_{i} R v$. We prove the following claim about $v$:

\begin{claim}\label{pointing outside of component}
    The vertex $v$ does not belong to $G_{k_{i}}$ (so must belong to either $D$ or $G_{k_{i}+1 \mathrm{\:mod\:} n}$).
\end{claim}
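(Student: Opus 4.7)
The plan is to argue by contradiction: suppose $v \in G_{k_{i}}$ and derive a contradiction from $v_{i} \in K_{i} \setminus K_{i-1}$. The key is to exploit the interval-helix structure of the decomposition $H = \bigsqcup_{i<n} G_{i} \sqcup D$, namely that $G_{k_{i}}$ is a disjoint union of $R$-linearly ordered intervals $S^{j}_{k_{i}}$ with no edges between different intervals. Since $v_{i}, v \in G_{k_{i}}$ and $v_{i} R v$, this edge must lie inside a single interval, forcing $v \in S(v_{i})$.

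Next, I would trace back to the stage $j \leq i-1$ at which $v$ first enters the filtration, so that $v \in K_{j} \setminus K_{j-1}$. By condition (1) of Claim \ref{graph filtration}, either $v = v_{j}$, or $v \in S(v_{j})$ with $v R v_{j}$. In either subcase, $v$ lies in both $S(v_{j})$ and $S(v_{i})$, so $S(v_{j}) = S(v_{i})$, and in particular $v_{i} \in S(v_{j})$. In the first subcase, $v_{i} R v = v_{j}$, so condition (1) applied at step $j$ already places $v_{i} \in K_{j} \subseteq K_{i-1}$, contradicting $v_{i} \notin K_{i-1}$. In the second subcase, $v_{i}, v, v_{j}$ all lie in the same linearly ordered interval $S(v_{j})$ with $v_{i} R v$ and $v R v_{j}$; transitivity of the linear order yields $v_{i} R v_{j}$, and condition (1) at step $j$ again places $v_{i} \in K_{j}$, the same contradiction. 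The parenthetical conclusion that $v$ must then belong to $D$ or $G_{k_{i}+1 \operatorname{mod} n}$ follows immediately: every edge has endpoints in some $G_{a}G_{a+1 \operatorname{mod} n}D$, and the directedness of the decomposition rules out $v \in G_{k_{i}-1 \operatorname{mod} n}$, because any such edge would be forced to point from $G_{k_{i}-1 \operatorname{mod} n}$ into $G_{k_{i}}$, opposite to $v_{i} R v$.

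I do not expect any major obstacle, as this is essentially a bookkeeping argument once the setup is in place. The conceptual point worth highlighting is that this claim is precisely where the interval-helix hypothesis is invoked: in the anticlique case there would simply be no edges within $G_{k_{i}}$ and the claim would be immediate, whereas a merely directed helix map leaves open the possibility of cycles within the $G_{i}$'s, which would break the use of transitivity in Case 2. The linearly-ordered-interval structure gives exactly enough rigidity for both subcases of the filtration analysis to collapse back onto condition (1) at an earlier stage.
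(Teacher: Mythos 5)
Your proof is correct and matches the paper's: both rule out $v \in S(v_{i})$ via the downward-closure property of the filtration (which you unpack explicitly by tracing back to the stage at which $v$ enters and applying condition (1) there, a step the paper compresses into a one-line citation), and then use the interval structure of $G_{k_{i}}$ to exclude the remaining case. Your closing observation about precisely where the interval-helix hypothesis is invoked, and why the anticlique and merely-directed cases behave differently here, is also accurate.
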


\begin{proof}(of claim)

First, $v \notin S(v_{i})$: otherwise, by (1) of Claim \ref{graph filtration}, since $v \in K_{i-1}$, $v_{i} \in K_{i-1}$, contradicting the assumption that $v_{i} \in K_{i} \backslash K_{i-1}$. So, because $h^{\ell}$ is an $n$-interval helix map of length $\ell$, if $v$ is in $G_{k_{i}}$ it must be in one of the other $R$-linearly ordered sets partitioning $G_{k_{i}}$, none of whose vertices have edges from vertices of $S(v_{i})$. This contradicts $v_{i} R v$.

\end{proof}

Additionally, let $w$ be the $R$-least vertex of $S(v_{i})$. We prove the following about $w$:

\begin{claim}\label{vertex pointing into least vertex}
    There is a vertex $w' \in H \backslash G_{k_{i}}$ such that $w' R w$ (so either $w' \in D$ or $w' \in G_{k_{i}-1 \mathrm{\:mod\:} n}$).
\end{claim}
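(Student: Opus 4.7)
The plan is to argue by contradiction: suppose no $w' \in H \setminus G_{k_{i}}$ satisfies $w'Rw$. I would then show that no vertex of $H$ at all points to $w$ via $R$, and use this to produce a full directed cyclic $n^{*}$-decomposition of $H$ with two nonempty parts, contradicting cyclic $n^{*}$-indecomposability of $H$.

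First I would rule out any vertex of $G_{k_{i}}$ pointing to $w$. Since $w$ is the $R$-least vertex of the $R$-linearly ordered set $S(v_{i})$, no vertex of $S(v_{i})$ points to $w$. Because $h^{\ell}$ is an $n$-interval helix map, $G_{k_{i}}$ is a disjoint union of linearly ordered sets $S^{j}_{k_{i}}$ with no edges (in either direction) between distinct components, so no vertex of $G_{k_{i}} \setminus S(v_{i})$ points to $w$ either. Combined with the standing assumption, no vertex of $H$ at all points to $w$. I would then set $H_{n^{*}-1} := \{w\}$, $H_{0} := H \setminus \{w\}$, and $H_{i} := \emptyset$ for $0 < i < n^{*}-1$; both parts are nonempty since $H_{0} \supseteq D$ and $D$ is nonempty by Claim \ref{nonempty base}. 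Every edge of $H$ is either entirely within $H_{0}$, in which case its endpoints lie in the permitted set $H_{0} H_{1}$, or is an edge out of $w$ directed to some vertex of $H_{0}$, running from $H_{n^{*}-1}$ to $H_{(n^{*}-1)+1 \mathrm{\:mod\:} n^{*}} = H_{0}$ in the correct direction; no edges go the other way into $w$ by what we just established. Hence $H = \bigsqcup_{i=0}^{n^{*}-1} H_{i}$ is a full directed cyclic $n^{*}$-decomposition of $H$ with at least two nonempty parts, contradicting the cyclic $n^{*}$-indecomposability of $H$.

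Once such a $w'$ is produced, the parenthetical remark (that $w'\in D$ or $w' \in G_{k_{i}-1 \mathrm{\:mod\:} n}$) follows immediately from the directed cyclic $n$-decomposition $H = \bigsqcup_{i=0}^{n-1} G_{i} \sqcup D$: an edge into $w \in G_{k_{i}}$ can only have its other endpoint in $G_{k_{i}}$, $D$, or $G_{k_{i}-1 \mathrm{\:mod\:} n}$, and the first possibility has just been excluded. The key step is ruling out edges from $G_{k_{i}}$ into $w$, which crucially uses \emph{both} the minimality of $w$ inside $S(v_{i})$ \emph{and} the interval structure forbidding edges between distinct linearly ordered components of $G_{k_{i}}$; the latter property is specific to $n$-interval helix maps and is where an extension to more general directed helix maps would be expected to break down.
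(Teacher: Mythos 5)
Your proof is correct and uses essentially the same ideas as the paper's: the $R$-minimality of $w$ in $S(v_{i})$, the interval-helix property forbidding edges between distinct linearly ordered components of $G_{k_{i}}$, and the observation that a vertex with no inbound edge would yield a forbidden full directed cyclic $n^{*}$-decomposition. The only difference is organizational: you argue by contradiction and spell out the decomposition $\{w\} \sqcup (H \setminus \{w\})$ explicitly, while the paper first invokes the decomposition argument (via a pointer to the proof of Lemma \ref{cyclically n-indecomposable graphs have cycles}) to get the existence of some $w'$ with $w'Rw$, and then rules out $w' \in G_{k_{i}}$.
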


\begin{proof}(of claim)
As in the proof of claim \ref{cyclically n-indecomposable graphs have cycles}, there must be some $w' \in H$ such that $w' R w$; we show $w' \notin G_{k_{i}}$. First, $w' \notin S(v_{i})$ because $w$ is the $R$-least vertex in $S(v_{i})$. But $w'$ is not in any of the other disjoint $R$-linearly ordered sets in $G_{i}$, because there are no edges from vertices of those sets to vertices of $S(v_{i})$.

\end{proof}

Because $v \in K_{i-1}$, for the unique $\tilde{v} \in G^{\ell-(i-1)}\sqcup D$, either in $G^{\ell-(i-1)}_{k_{i}+1 \mathrm{\:mod\:} n}$ or $D$ by Claim \ref{pointing outside of component}, such that $h^{\ell}(\tilde{v}) = v$, $\tilde{v} \in G^{*}_{+} \cap h^{-1}_{\ell}(K_{i-1})$ by construction of $G^{c}$. Since $\tilde{v} \in G^{\ell-(i-1)}_{k_{i}+1 \mathrm{\:mod\:} n} \sqcup D$, $\tilde{v}_{i} \in G_{k_{i}}^{j}$ and $v_{i} R v$, and $j \leq \ell - i < \ell-(i-1)$, $\tilde{v}_{i} R \tilde{v}$. Let $\tilde{w'}$ be the unique vertex in  $G^{j^{*}} \sqcup D$, either in $G^{j^{*}}_{k_{i}-1 \mathrm{\:mod\:} n}$ or $D$, such that $h^{\ell}(\tilde{w'})= w'$ where $w'$ is as in Claim \ref{vertex pointing into least vertex}. Finally, let $\tilde{w}$ be the $R$-least vertex of $G_{k_{i}}^{j} \cap (h^{\ell})^{-1}(\{v_{i}\} \cup \{v\in S(v_{i}): vRv_{i}\}) \subseteq G^{+}$, so $h^{\ell}(\tilde{w}) = w$, and $\tilde{w'} R \tilde{w}$ because $\tilde{w'} \in G^{j^{*}}_{k_{i}-1 \mathrm{\:mod\:} n} \sqcup D$, $\tilde{w} \in G_{j}^{i}$, $w' R w$ and $j^{*} < j$. In addition to $\tilde{w'} R \tilde{w}$ and $\tilde{v_{i}} R \tilde{v}$, we have $\tilde{w}R \tilde{v_{i}}$. But $f(\tilde{w}')= c^{*}$ by Claim \ref{monochromatic column}, while $f(\tilde{v}) = c^{*}$ by $\tilde{v} \in G^{*}_{+} \cap h^{-1}_{\ell}(K_{i-1})$ and the induction hypothesis that $f$ takes the value $c^{*}$ on all of $G^{*}_{+} \cap h^{-1}_{\ell}(K_{i-1})$. Two properties of full directed cyclic $n^{*}$-decompositions for $n^{*} > 3$ (or even full directed cyclic ``$r$-decompositions" for $r > 3$; see Remark \ref{real-valued helix maps}) are

(A) If $aRb$, $bRc$, $cRd$ and $f(a) = f(d) = c^{*}$, then $f(b)=f(c) = c^{*}$, and

(B) If $aRb$, $bRc$, and $f(a) = f(c) = c^{*}$, then $f(b) = c^{*}$.

But by property (A), $\tilde{w'} R \tilde{w}$, $\tilde{w}R \tilde{v_{i}}$, $\tilde{v_{i}} R \tilde{v}$ and $f(\tilde{w'}) =f(\tilde{v}) = c^{*}$, we have $f(\tilde{w})=f(\tilde{v_{i}}) = c^{*}$. Then, since every vertex in the rest of the $R$-linearly ordered set $G_{k_{i}}^{j} \cap h^{-1}_{\ell}(K_{i} \backslash K_{i-1})$ lies $R$-between $\tilde{w}$ and $\tilde{v_{i}}$, by (B) $f$ takes the value $c^{*}$ on all of $G_{i}^{j} \cap h^{-1}_{\ell}(K_{i} \backslash K_{i-1})$, as desired.

\end{proof}

\subsection{General (directed) helix maps and cyclic indecomposability} 

One may wish to extend Theorem \ref{interval helix non-integrality} to directed $n$-helix maps, or $n$-helix maps in general: find a hereditary class of directed graphs closed under weak embeddings, whose graph relation exhibits $\mathrm{SOP}_{r}$, but which is closed under directed $n$-helix maps for integers $n > r$ (equivalently, for $n = \lceil r \rceil$), or which is closed under $n= \lceil r \rceil$-helix maps more generally (i.e., which is $\mathrm{NSOP}_{n}$.) As discussed at the beginning of this section, extending Theorem \ref{interval helix non-integrality} to general $n$-helix maps would amount to showing the real-valued $\mathrm{NSOP}_{r}$ and integer-valued $\mathrm{NSOP}_{n}$-hierarchy are distinct at the level of hereditary classes, as in Question \ref{hereditary class distinctness} (and extending the generalizations suggested in Remark \ref{real-valued helix maps} to general $r'$-helix maps would imply $\mathfrak{o}(\mathcal{H})$ is not always an integer, answering both parts of that question). In the rest of this section, we discuss some additional considerations about these more general classes of helix maps, as they relate to cyclic indecomposability.

First of all, from the configuration of the linear order (i.e., the graph $\{a_{i}\}_{i < \omega}$ where $a_{i} R a_{j}$ for $i < j$), it is always possible to use directed helix maps to build any hereditarily cyclically decomposable graph:

\begin{prop}\label{directed helix maps build hereditarily cylically decompsable graphs}
    Let $n \geq 3$ be an integer, and let $\mathcal{H}$ be a hereditary class of graphs with graph relation $R$, closed under weak embeddings, which is closed under directed $m$-helix maps for integers $m \geq n$. (This is the case, for example, when $\mathcal{H}$ is $\mathrm{NSOP}_{n}$). Suppose $\{a_{i}\}_{i < \omega} \in \mathcal{H}$ where $a_{i} R a_{j}$ exactly when $i < j$. Then for any hereditarily cyclically $n$-decomposable graph $G$ (i.e., a graph $G$ none of whose induced subgraphs are cyclically $n$-indecomposable), $G \in \mathcal{H}$.
\end{prop}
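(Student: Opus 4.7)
The plan is to induct on $|V(G)|$. The base case $|V(G)| \leq 1$ is immediate, since a singleton (or the empty graph) weakly embeds into $\{a_{i}\}_{i<\omega} \in \mathcal{H}$ and $\mathcal{H}$ is closed under weak embeddings. For the inductive step with $|V(G)| \geq 2$, since $G$ is hereditarily cyclically $n$-decomposable, I would fix a full directed cyclic $n$-decomposition $G = \bigsqcup_{i<n} G_{i}$ with at least two $G_{i}$ nonempty. Because at least two stripes are nonempty, each $G_{i}$ is a proper induced subgraph of $G$, so $|V(G_{i})| < |V(G)|$, and each $G_{i}$ is itself hereditarily cyclically $n$-decomposable; by the inductive hypothesis, $G_{i} \in \mathcal{H}$ for each $i$.

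I would then invoke closure under directed $n$-helix maps: letting $h \colon H \twoheadrightarrow G$ be the directed $n$-helix map of infinite length associated with this decomposition (Definition \ref{helix map}), it suffices to show $H \in \mathcal{H}$, which, since $H$ is infinite, reduces to showing every finite induced subgraph $H_{0} \subseteq H$ lies in $\mathcal{H}$. For such an $H_{0}$, let $G' := h(H_{0}) \subseteq G$. When $G' \subsetneq G$ (so $|V(G')| < |V(G)|$), one checks directly from Definition \ref{helix map} that the helix cover $H(G')^{\omega}$ of $G'$ is an induced subgraph of $H(G)^{\omega}$, with $H_{0}$ in turn an induced subgraph of $H(G')^{\omega}$; under the natural strengthening of the inductive hypothesis that also asserts $H(G')^{\omega} \in \mathcal{H}$ for smaller hereditarily cyclically $n$-decomposable $G'$, closure under weak embeddings yields $H_{0} \in \mathcal{H}$.

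The main obstacle is the residual case $h(H_{0}) = G$, in which $H_{0}$ contains at least one preimage of each vertex of $G$. I would handle this by a secondary induction on the number of distinct levels $j$ appearing in $H_{0}$, exploiting the fact that the cross-stripe edges of $H$ respect the linear order on levels. In the single-level base case, $H_{0}$ is a disjoint union of induced subgraphs of the $G_{i}$'s, which can be placed into $\mathcal{H}$ by arranging these pieces along $\{a_{i}\}_{i<\omega}$ via another directed $n$-helix map whose cover is cycle-free (using the $G_{i} \in \mathcal{H}$ together with the linear order structure). The inductive step peels off an extremal level of $H_{0}$ by applying a further directed $n$-helix map chosen so that its cover falls under the already-handled subcase $|V(G')| < |V(G)|$ combined with the secondary IH on fewer levels. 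The peeling step, where the directedness of the cross-stripe edges and the linear-order structure on levels provided by $\{a_{i}\}_{i<\omega} \in \mathcal{H}$ must be used together, is the most delicate part of the argument and is where the hypothesis of closure under directed $m$-helix maps for $m \geq n$ is essential.
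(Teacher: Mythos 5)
Your overall plan --- reduce to showing the helix cover $H$ is in $\mathcal{H}$, then invoke closure under helix maps --- is the right direction, but the argument has a genuine gap in exactly the place you yourself flag as delicate, and I don't think the secondary induction as sketched can be made to close.

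The core problem is the residual case $h(H_0)=G$. You propose a secondary induction on the number of levels appearing in $H_0$, with a ``peeling'' step that applies yet another directed helix map. But peeling off an extremal level of $H_0$ via a further helix cover does not obviously land you back in the case $|V(h(H_0'))| < |V(G)|$: the new cover $H_0'$ still contains, at each of its own levels, copies of the same floors $G^j_i$ you started with, and if $H_0$ hit every vertex of $G$ you have no reason to expect the new object does not. The recursion doesn't terminate in the way you need. Your single-level base case also has an unaddressed issue: a single level of the cover is a disjoint union of (copies of) the $G_i$, and you assert this ``can be placed into $\mathcal{H}$'' using $G_i \in \mathcal{H}$ and the linear order. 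But the $G_i$ may themselves contain cycles, so this disjoint union is not cycle-free and does not weakly embed into $\{a_i\}_{i<\omega}$; having each $G_i\in\mathcal{H}$ does not give you their disjoint union in $\mathcal{H}$, since $\mathcal{H}$ need not be closed under disjoint unions.

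The paper sidesteps both problems with two structural moves you are missing. First, it argues by contrapositive: assume $G\notin\mathcal{H}$, take a finite helix cover $\tilde G^*\sqcup H\notin\mathcal{H}$ (one exists since $\mathcal{H}$ is closed under helix maps and $G\sqcup H\notin\mathcal{H}$), and aim to manufacture a cycle-free graph not in $\mathcal{H}$, contradicting $\{a_i\}_{i<\omega}\in\mathcal{H}$ via Claim \ref{every cycle-free graph embeds in the linear order}. Second, and crucially, the induction hypothesis carries a free parameter: the statement proved by induction on $|V(G)|$ is about graphs of the form $G\sqcup H$ with $H$ arbitrary, asserting that if $G\sqcup H\notin\mathcal{H}$ then one can replace $G$ by a cycle-free $\tilde G$ keeping $\tilde G\sqcup H\notin\mathcal{H}$ and a homomorphism back to $G\sqcup H$. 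That extra $H$ plays the role of the base $D$ of the helix decomposition, and it lets one apply the IH to each floor $G^j_i$ of the cover (each strictly smaller than $G$ since at least two stripes are nonempty) \emph{with the remaining floors and base kept fixed as the new $H$-parameter}. Iterating over all $(i,j)$ replaces each floor by a cycle-free graph; then the directedness of cross-stripe edges and the level order force the whole result to be cycle-free, since any cycle would have to strictly ascend in level and thus could not close. Your strengthening of the IH (asserting $H(G')^\omega\in\mathcal{H}$ for smaller $G'$) is not the right one: it doesn't give you a handle on subgraphs of the cover that surject onto $G$, which is precisely the case you need. You should look at the proof in the paper for the formulation with the carried-along $H$.
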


\begin{proof}
    We show by induction on the size of $G$ the following: for any graph of the form $G \sqcup H$ where $G$ is hereditarily cyclically $n$-decomposable, if $G H \notin \mathcal{H}$, then there is some graph of the form $\tilde{G} \sqcup H \notin \mathcal{H}$ with $H$ as an induced subgraph, and a graph homomorphism $f: \tilde{G} \sqcup H \to G \sqcup H$ which is the identity on $H$ and maps $\tilde{G}$ to $G$, such that $\tilde{G}$ is cycle-free. This is all we need: setting $H = \emptyset$, we then see that there is some cycle-free graph not belonging to $\mathcal{H}$, contradicting $\{a_{i}\}_{i < \omega} \in \mathcal{H}$ (by Claim \ref{every cycle-free graph embeds in the linear order}).

    Suppose the hypothesis is true for all graphs smaller than $G$. Since $G$ is cyclically $n$-indecompsable, we get a full directed cyclic $n$-decomposition $G = \sqcup^{n-1}_{i =0} G_{i}$, giving a directed cyclic $n$-decomposition $G \sqcup H = \sqcup^{n-1}_{i =0} G_{i} \sqcup H$. Let $h: \tilde{G}^{*} \sqcup H \twoheadrightarrow G \sqcup H$ be the associated directed $n$-helix map of length $\ell$, where $\ell$ is large enough (by closure of $\mathcal{H}$ under directed $n$-helix maps) that $ \tilde{G}^{*} \sqcup H \notin \mathcal{H}$. Write $\tilde{G}^{*} = \bigsqcup_{j < \omega}\bigsqcup^{n-1}_{i =0} G^{j}_{i}$ as in the proof of Fact \ref{closure under helix maps}; then each of the $G^{j}_{i}$ are strictly smaller than $G$, and the only edges of $\tilde{G}^{*}$ are between two vertices of one of the $G^{j}_{i}$, or from vertices of $G^{j}_{i}$ to vertices of $G^{j'}_{i + 1 \mathrm{\: mod \:} n}$. Repeatedly applying the induction hypothesis to $G^{i}_{j}$ with its complement for each $i, j$, we find $\tilde{G} = \bigsqcup_{j < \omega}\bigsqcup^{n-1}_{i =0} (G^{j}_{i})'$ whose edges have the same properties for $(G^{j}_{i})'$, such that $\tilde{G} \sqcup H$ has a graph homomorphism to $\tilde{G}^{*} \sqcup H$ mapping $\tilde{G}$ to $\tilde{G}^{*}$ and restricting to the identity on $H$ (and therefore, has a graph homomorphism to $G \sqcup H$ mapping $\tilde{G}$ to $G$ and restricting to the identity on $H$), and additionally such that each of the $(G^{j}_{i})'$ is cycle-free. It remains only to show that $\tilde{G} = \bigsqcup_{j < \omega}\bigsqcup^{n-1}_{i =0} (G^{j}_{i})'$  is cycle-free; suppose $v_{0} \ldots, v_{k-1}$ is a cycle in $\tilde{G}$. Then this cycle cannot belong to one of the $(G^{j}_{i})'$, so for $j_{k'}$, $0 \leq k' <k$ the value of $j$ for which $v_{k'} \in \bigsqcup^{n-1}_{i =0} (G^{j}_{i})'$, where $j_{0} \leq \ldots \leq j_{k-1}$ by the properties of edges between the $(G^{j}_{i})'$, one of the inequalities must be strict. But then $j_{0} < j_{k-1}$, so it is impossible that $v_{k-1} R v_{0}$ by the properties of edges between the $(G^{j}_{i})'$, contradicting that $v_{0} \ldots, v_{k-1}$ is a cycle.
\end{proof}

Then, it may be tempting to conclude the converse: for $n \geq 3$ an integer, if a graph $G$ is contained in every hereditary class of graphs $\mathcal{H}$ that is closed under weak embeddings and is closed under directed $m$-helix maps for integers $m \geq n$ (or alternatively, is closed under all $n$-helix maps, so is $\mathrm{NSOP}_{n}$), such that $\{a_{i}\}_{i < \omega} \in \mathcal{H}$ where $a_{i} R a_{j}$ exactly when $i < j$, $G$ must be hereditarily cyclically $n$-indecomposable. If this were true, then by Lemma  \ref{(N, k)-cycle cyclically n^{*}-indecomposable}, it would give us what we want for $n = \lceil r \rceil$: the graph $\{a_{\theta}\}_{\theta \in S^{1}}$, where $a_{\theta} R a_{\psi}$ exactly when $\psi$ lies at most $\frac{2\pi}{r}$ radians counterclockwise from $\theta$, contains cyclically $n$-indecomposable subgraphs, so there must be a hereditary class of graphs $\mathcal{H}$ that is closed under weak embeddings and closed under directed $n$-helix maps for integers $m \geq n$ (or is $\mathrm{NSOP}_{n}$), such that $\{a_{i}\}_{i < \omega} \in \mathcal{H}$ but $\{a_{\theta}\}_{\theta \in S^{1}} \notin \mathcal{H}$ (so $\mathcal{H}$ has $\mathrm{SOP}_{r}$). However, this converse to Proposition \ref{directed helix maps build hereditarily cylically decompsable graphs} is false. We give a counterexample in Figure \ref{cyclically indecomposable graph with hereditarily cyclically decomposable helix cover} below, for $n = 4$. (We do not expect that there is anything special about $n=4$ in constructing this counterexample.)

\begin{figure}[hbt!]
{%
\begin{circuitikz}
\tikzstyle{every node}=[font=\small]
\draw  (5.25,6.5) circle (0.25cm) node {\small $a_3$} ;
\draw  (7,6.5) circle (0.25cm) node {\small $a_2$} ;
\draw  (8.75,6.5) circle (0.25cm) node {\small $a_1$} ;
\draw  (3.75,7.75) circle (0.25cm) node {\small $b_1$} ;
\draw  (3.75,9.25) circle (0.25cm) node {\small $b_2$} ;
\draw  (3.75,10.75) circle (0.25cm) node {\small $b_3$} ;
\draw  (5.25,12) circle (0.25cm) node {\small $c_1$} ;
\draw  (7,12) circle (0.25cm) node {\small $c_2$} ;
\draw  (8.75,12) circle (0.25cm) node {\small $c_3$} ;
\draw  (10.25,7.75) circle (0.25cm) node {\small $d_3$} ;
\draw  (10.25,9.25) circle (0.25cm) node {\small $d_2$} ;
\draw  (10.25,10.75) circle (0.25cm) node {\small $d_1$} ;
\draw  (7,3.75) circle (0.25cm) node {\small $e_1$} ;
\draw [->, >=Stealth] (7.25,4) -- (8.5,6);
\draw [->, >=Stealth] (8.25,6.5) -- (7.5,6.5);
\draw [->, >=Stealth] (6.5,6.5) -- (5.75,6.5);
\draw [->, >=Stealth] (5.5,6) -- (6.75,4);
\draw [->, >=Stealth] (6.5,4) -- (4,7.25);
\draw [->, >=Stealth] (4,10.5) -- (6.75,4);
\draw [->, >=Stealth] (10,7.5) -- (7.5,4);
\draw [->, >=Stealth] (7.25,4) -- (10,10.5);
\draw [->, >=Stealth] (6.75,4.25) -- (5.25,11.75);
\draw [->, >=Stealth] (8.75,11.75) -- (7.25,4.25);
\draw [->, >=Stealth] (5.75,12) -- (6.5,12);
\draw [->, >=Stealth] (7.5,12) -- (8.25,12);
\draw [->, >=Stealth] (3.75,8.25) -- (3.75,8.75);
\draw [->, >=Stealth] (3.75,9.75) -- (3.75,10.25);
\draw [->, >=Stealth] (10.25,10.25) -- (10.25,9.75);
\draw [->, >=Stealth] (10.25,8.75) -- (10.25,8.25);
\draw [->, >=Stealth] (10,7.75) -- (7.25,6.75);
\draw [->, >=Stealth] (5.25,6.75) -- (4,9);
\draw [->, >=Stealth] (4,11) -- (6.75,11.75);
\draw [->, >=Stealth] (9,11.75) -- (10,9.5);
\end{circuitikz}
}%
\caption{A cyclically $4$-indecomposable graph $G$ with a directed $4$-helix map $h: \tilde{G} \twoheadrightarrow G$ where $\tilde{G}$ contains no cyclically $4$-indecomposable graph.}

\label{cyclically indecomposable graph with hereditarily cyclically decomposable helix cover}
\end{figure}
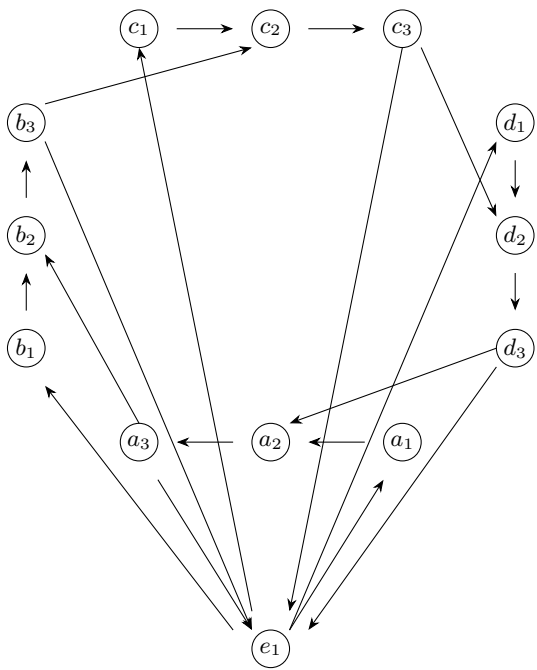

We first show that $G$ is cyclically $4$-indecomposable. Suppose $G$ has a full, directed cyclic $4$-decomposition and let $f: G \to \{0, 1, 2, 3\}$ be as in the proof of Lemma \ref{(N, k)-cycle cyclically n^{*}-indecomposable}. Also suppose, without loss of generality, that $f(e_{1}) = 0$. If $f$ takes the value $0$ on all of $G$, we are done, so by symmetry it suffices to show that $f(a_{i}) = 0$ for $i = 1, 2, 3$. Because $f(e_{1}) = 0$, $e_{1} R a_{1}$, $a_{1} R a_{2}$, $a_{2} R a_{3}$ and $a_{3} R e_{1}$, it suffices to show $f(a_{2}) = 0$. First, because $f(e_{1}) = 0$, $e_{1} R a_{1}$, and $a_{1} R a_{2}$, it is impossible that $f(a_{2}) = 3$; moreover, because $f(e_{1}) = 0$, $a_{2} R a_{3}$ and $a_{3} R e_{1}$, it is impossible that $f(a_{2}) = 1$. It remains to rule out the case that $f(a_{2}) = 2$. But because $d_{3} R e_{1}$ and $f(e_{1}) = 0$, either $f(d_{3}) = 0$ or $f(d_{3}) = 3$. In either case, because $d_{3} R a_{2}$, it is impossible that $f(a_{2}) = 2$.

Next, by Proposition \ref{directed helix maps build hereditarily cylically decompsable graphs}, to show that every hereditary class of directed graphs $\mathcal{H}$ closed under weak embeddings, closed under $m$-helix maps for integers $m \geq 4$ and with $\{a_{i}\}_{i < \omega} \in \mathcal{H}$ has $G \in \mathcal{H}$, it suffices to verify that $G$ admits a directed $4$-helix map $h: \tilde{G} \twoheadrightarrow G$ where $\tilde{G}$ contains no cyclically $4$-indecomposable induced subgraph. For $G_{0} :=\{a_{1}, a_{2}, a_{3}\} $, $G_{1} :=\{b_{1}, b_{2}, b_{3}\} $, $G_{2} :=\{c_{1}, c_{2}, c_{3}\} $, $G_{3} :=\{d_{1}, d_{2}, d_{3}\} $ and $D := \{e_{1}\}$,  $G = \bigsqcup^{3}_{i=0} G_{i} \sqcup D$ is a directed cyclic $4$-decomposition of $G$; we show the associated directed $4$-helix map  $h: \tilde{G} \twoheadrightarrow G$ is as desired. Write $\tilde{G} = \bigsqcup_{j < \omega}\bigsqcup^{3}_{i =0} G^{j}_{i}$ as in the proof of Fact \ref{closure under helix maps}; we show that, for finite $\tilde{G}_{0} \subset \tilde{G}$ of size at least $2$, $\tilde{G}_{0}$ is cyclically $4$-decomposable. Let $j < \omega$ be the least such that $\tilde{G}_{0}$ contains points of $\bigsqcup^{3}_{i =0} G^{j}_{i}$; we may assume, without loss of generality, that $\tilde{G}_{0}$ contains points of $G^{j}_{0}$. Let $\tilde{a_{1}}, \tilde{a_{2}}, \tilde{a_{3}} \in G^{j}_{0}$ map to $a_{1}, a_{2}, a_{3}$ respectively; then $\tilde{G}_{0}$ contains at least one of $\tilde{a_{1}}, \tilde{a_{2}}, \tilde{a_{3}}$. If it does not contain all three of them as well as $e_{1}$, then one of the $\tilde{a_{1}}, \tilde{a_{2}}, \tilde{a_{3}}$ that does belong to $\tilde{G}_{0}$ will either have no edges to other vertices of $\tilde{G}_{0}$, or no edges from other vertices of $\tilde{G}_{0}$, yielding a full, directed cyclic $4$-decomposition of $\tilde{G}_{0}$ as in the proof of Lemma \ref{cyclically n-indecomposable graphs have cycles}. Otherwise, all of $\tilde{a_{1}}, \tilde{a_{2}}, \tilde{a_{3}}$ as well as $e_{1}$ belong to $\tilde{G}_{0}$; then $f(\tilde{a}_{i}) = i$ for $i = 1, 2, 3$ and $f(v) = 0$ for $v \in \tilde{G}_{0} \backslash \{\tilde{a_{1}}, \tilde{a_{2}}, \tilde{a_{3}}\}$ gives a full, directed cyclic $4$-decomposition of $\tilde{G}_{0}$. (The only edges of $\tilde{G}_{0}$ one of whose endpoints is $\tilde{a_{1}}$ are an edge from $e_{1} \in \tilde{G}_{0} \backslash \{\tilde{a_{1}}, \tilde{a_{2}}, \tilde{a_{3}}\}$ and an edge to $\tilde{a_{2}}$, the only edges of $\tilde{G}_{0}$ one of whose endpoints is $\tilde{a_{3}}$ are an edge from $\tilde{a_{2}}$ and edges to vertices $v \in \tilde{G}_{0} \backslash \{\tilde{a_{1}}, \tilde{a_{2}}, \tilde{a_{3}}\}$, and since $j < \omega$ is the least such that $\tilde{G}_{0}$ contains points of $\bigsqcup^{3}_{i =0} G^{j}_{i}$, the only edges of $\tilde{G}_{0}$ one of whose endpoints is $\tilde{a_{2}}$ are an edge to $\tilde{a}_{3}$ and an edge from $\tilde{a_{1}}$.) So $\tilde{G}_{0}$ is in fact cyclically $4$-decomposable.

We summarize the discussion of this counterexample in the following proposition, which says, informally, that the combinatorial strength of $\mathrm{NSOP}_{n}$ surpasses just being able to build (from $\{a_{i}\}_{i <\omega}$ with $a_{i} R a_{j}$ exactly when $a_{i} < a_{j}$) the hereditarily cyclically $n$-indecomposable graphs.

\begin{prop}
There is an integer $n > 3$ and a cyclically $n$-indecomposable graph $G$ such that, for every hereditary class of graphs $\mathcal{H}$ with graph relation $R$ closed under weak embeddings, with $\mathcal{H}$ closed under directed $m$-helix maps for integers $m \geq n$ (so in particular, for when $\mathcal{H}$ is $\mathrm{NSOP}_{n}$), if the graph $\{a_{i}\}_{i < \omega} \in \mathcal{H}$ where $a_{i} R a_{j}$ exactly when $i < j$, then $G \in \mathcal{H}$.
\end{prop}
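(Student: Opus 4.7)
The plan is to take $n = 4$ and let $G$ be the explicit $13$-vertex graph depicted in Figure~\ref{cyclically indecomposable graph with hereditarily cyclically decomposable helix cover}. By Proposition~\ref{directed helix maps build hereditarily cylically decompsable graphs}, it then suffices to check two things: that $G$ itself is cyclically $4$-indecomposable, and that $G$ admits a directed $4$-helix cover $h : \tilde{G} \twoheadrightarrow G$ whose total space $\tilde{G}$ is hereditarily cyclically $4$-decomposable. Granting both, any hereditary class $\mathcal{H}$ as in the statement contains every finite induced subgraph of $\tilde{G}$ by Proposition~\ref{directed helix maps build hereditarily cylically decompsable graphs}, hence contains $\tilde{G}$, and then contains $G$ by closure under the single directed $4$-helix map $h$.

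For cyclic $4$-indecomposability of $G$, I would argue by contradiction from a full directed cyclic $4$-decomposition with induced coloring $f : G \to \{0,1,2,3\}$ satisfying $f(e_1) = 0$. The graph $G$ has an evident four-fold rotational symmetry cycling the arms $\{a_i\} \to \{b_i\} \to \{c_i\} \to \{d_i\}$, so it suffices to force $f \equiv 0$ on $\{a_1,a_2,a_3\}$ and then iterate on the remaining three arms. Traversing the directed $4$-cycle $e_1 \to a_1 \to a_2 \to a_3 \to e_1$ forces $f(a_2) \in \{0,2\}$, and the auxiliary arrows $d_3 \to e_1$ and $d_3 \to a_2$ together with $f(e_1) = 0$ force $f(d_3) \in \{0,3\}$ and hence exclude $f(a_2) = 2$.

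For the helix cover, I would use the directed cyclic $4$-decomposition $G = \bigsqcup_{i=0}^{3} G_i \sqcup \{e_1\}$ with $G_i$ the $i$-th arm, and let $h : \tilde{G} \twoheadrightarrow G$ be the associated $4$-helix map with layer decomposition $\tilde{G} = \bigsqcup_{j<\omega}\bigsqcup_{i=0}^{3} G^j_i \sqcup \{e_1\}$. Given any finite $\tilde{G}_0 \subseteq \tilde{G}$ with $|\tilde{G}_0| \geq 2$, let $j^*$ be the minimal layer meeting $\tilde{G}_0$, and by the four-fold symmetry assume $\tilde{G}_0 \cap G^{j^*}_0 \neq \emptyset$, writing $\tilde{a}_k$ for the copy of $a_k$ in $G^{j^*}_0$. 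If $\tilde{G}_0$ omits at least one vertex of $\{\tilde{a}_1,\tilde{a}_2,\tilde{a}_3,e_1\}$, a short case analysis—crucially using minimality of $j^*$ to rule out any incoming edge to $\tilde{a}_2$ from a vertex $\tilde{d}_3^{j}$ with $j < j^*$—locates some $\tilde{a}_i \in \tilde{G}_0$ whose edges in $\tilde{G}_0$ all point in a single direction, yielding the trivial full directed cyclic $4$-decomposition $\{\tilde{a}_i\} \sqcup (\tilde{G}_0 \setminus \{\tilde{a}_i\})$ as in the proof of Lemma~\ref{cyclically n-indecomposable graphs have cycles}. In the remaining case $\{\tilde{a}_1,\tilde{a}_2,\tilde{a}_3,e_1\} \subseteq \tilde{G}_0$, I would set $f(\tilde{a}_i) := i$ and $f(v) := 0$ for all other $v \in \tilde{G}_0$, and verify this is a full directed cyclic $4$-decomposition; the key point, again by minimality of $j^*$, is that $\tilde{a}_2$'s only edges within $\tilde{G}_0$ are $\tilde{a}_1 \to \tilde{a}_2 \to \tilde{a}_3$.

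The main obstacle is the second case analysis: the graph $G$ was engineered so that the ``diagonal'' $d_3 \to a_2$ is precisely what obstructs the naive cyclic $4$-decomposition of $G$ itself, and one must verify that this same diagonal cannot recreate a cyclically $4$-indecomposable subgraph anywhere in the helix cover, which works only because any such diagonal in $\tilde{G}$ would require a vertex $\tilde{d}_3^{j}$ with $j$ strictly below the minimal layer $j^*$ hit by $\tilde{G}_0$.
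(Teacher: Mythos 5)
Your proposal matches the paper's own argument essentially step for step: same choice of $n=4$, the same $13$-vertex graph $G$ from Figure~\ref{cyclically indecomposable graph with hereditarily cyclically decomposable helix cover}, the same reduction via Proposition~\ref{directed helix maps build hereditarily cylically decompsable graphs}, the same coloring-based contradiction using the diagonal $d_3 \to a_2$ to rule out $f(a_2)=2$, and the same minimal-layer case analysis in the helix cover. The only cosmetic difference is that you explicitly flag minimality of the bottom layer $j^*$ as the reason $\tilde{a}_2$ cannot receive a $d_3$-type edge even in the first (omission) case, whereas the paper states that reliance only in the second case — a correct and slightly more careful reading, but not a different argument.
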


This means that, in order to extend Theorem \ref{interval helix non-integrality} to show that the real-valued $\mathrm{NSOP}_{r}$ hierarchy is distinct from the integer-valued $\mathrm{NSOP}_{n}$ hierarchy for hereditary classes or that $\mathfrak{o}(\mathcal{H})$ is not always an integer, cyclic indecomposability alone will not be enough. However, it also means that, if we are able to show distinctness of the real-valued and integer valued hierarchies for hereditary classes or that $\mathfrak{o}(\mathcal{H})$ has non-integer values, our results on interval helices in Theorem \ref{interval helix non-integrality} will still provide us with further information, giving a combinatorial invariant preserved by interval helices that is not preserved by helix maps in general.

\textbf{Acknowledgements} The author would like to thank Maryanthe Malliaris for many wise words during the process of writing this article. We also would like to thank Sylvy Anscombe for many engaging discussions, and for asking questions that clarified some of this article's results, as well as Caroline Terry for bringing the connection to \cite{LZ17} in the proof of Theorem \ref{integerality} to the author's knowledge.

\bibliographystyle{plain}
\bibliography{refs}

\end{document}